\documentclass[11pt]{amsart}

\usepackage{amsmath}
\usepackage{amssymb}
\usepackage{amscd}
\usepackage{epsfig}

\setlength{\textwidth}{138mm}

 \newcommand{\bee}{\begin{equation}}
 \newcommand{\eee}{\end{equation}}

\def\ov{\overline}

\def\b0{{\bf 0}}
\def\Span{{\rm Span}}
\def\Spec{\mbox{\em Spec}}

 \textwidth 15.3cm
 \textheight 23cm
 \topmargin -0.5cm
 \evensidemargin 0.5cm
 \oddsidemargin 0.5cm


\newcommand{\be}{\begin{eqnarray}}
\newcommand{\ee}{\end{eqnarray}}
\newcommand{\supp}{\mbox{\rm supp}}

\newcommand{\eps}{{\mbox{$\epsilon$}}}

\newcommand{\Th}{{\theta}}
\newcommand{\gam}{{\gamma}}
\newcommand{\sig}{{\sigma}}

\newcommand{\R}{{\mathbb R}}
\newcommand{\Q}{{\mathbb Q}}
\newcommand{\Z}{{\mathbb Z}}
\newcommand{\C}{{\mathbb C}}

\newcommand{\Compl}{{\mathbb C}}

\newcommand{\Ak}{{\mathcal A}}

\newcommand{\Dk}{{\mathcal D}}

\newcommand{\Ck}{{\mathcal C}}

\newcommand{\Fk}{{\mathcal F}}

\newcommand{\Kk}{{\mathcal K}}

\newcommand{\Sk}{{\mathcal S}}
\newcommand{\Tk}{{\mathcal T}}

\newcommand{\Xt}{X_{\Tk}}

\newcommand{\bx}{{\bf x}}
\newcommand{\by}{{\bf y}}
\newcommand{\bz}{{\bf z}}
\newcommand{\bu}{{\bf u}}
\newcommand{\bv}{{\bf v}}

\newcommand{\bo}{{\bf 0}}

\newcommand{\dist}{\mbox{\rm dist}}

\newcommand{\Lam}{{\Lambda}}

\newcommand{\lam}{\lambda}

\newcommand{\om}{\omega}
\newcommand{\mmin}{\rm min}
\newcommand{\mmax}{\rm max}

\newcommand{\balpha}{\mbox{\boldmath{$\alpha$}}}
\newcommand{\bbeta}{\mbox{\boldmath{$\beta$}}}
\newcommand{\sbbeta}{\mbox{\tiny\boldmath${\beta}$}}
\newcommand{\bgamma}{\mbox{\boldmath{$\gamma$}}}

\def\Span{{\rm Span}}
\def\re{{\rm Re}}

\def\Diag{{\rm Diag}}

\def\ba{{\bf a}}


 \newtheorem{theorem}{Theorem}[section]
 \newtheorem{lemma}[theorem]{Lemma}
 \newtheorem{prop}[theorem]{Proposition}
 \newtheorem{cor}[theorem]{Corollary}

 \newtheorem{defi}[theorem]{Definition}
 \newtheorem{example}[theorem]{Example}
 \newtheorem{remark}[theorem]{Remark}

\numberwithin{equation}{section}

\begin{document}

\title{Pisot family self-affine tilings, discrete spectrum, and the Meyer property}

\author{Jeong-Yup Lee}
\address{Jeong-Yup Lee, KIAS, 87 Hoegiro, Dongdaemun-gu, Seoul 130-722, Korea}
\email{jylee@kias.re.kr}

\author{Boris Solomyak}
\address{Boris Solomyak, Box 354350, Department of Mathematics,
University of Washington, Seattle WA 98195}
\email{solomyak@math.washington.edu}

\begin{abstract}
We consider self-affine tilings in the Euclidean space and the
associated tiling dynamical systems, namely, the translation
action on the orbit closure of the given tiling. We investigate
the spectral properties of the system. It turns out that the
presence of the discrete component depends on the algebraic
properties of the eigenvalues of the expansion matrix $\phi$ for
the tiling. Assuming that $\phi$ is diagonalizable over $\C$ and
all its eigenvalues are algebraic conjugates of the same
multiplicity, we show that the dynamical system has a relatively
dense discrete spectrum if and only if it is not weakly mixing,
and if and only if the spectrum of $\phi$ is a ``Pisot family.''
Moreover, this is equivalent to the Meyer property of the
associated discrete set of ``control points'' for the tiling.
\end{abstract}

\date{\today}

\thanks{2000 {\em Mathematics Subject Classification:} Primary
37B50; Secondary 52C23
\\ \indent
{{\em Key words and phrases}: Pisot family, no weakly mixing,
self-affine tilings, discrete spectrum, Meyer sets}  \\ \indent
The research of J. L. was supported in part by KIAS and the
research of B. S. was supported in part by NSF}

\maketitle

\section{Introduction}

\noindent Given a self-affine tiling $\Tk$ of $\R^d$, we consider
the tiling space, or ``hull'' $X_{\Tk}$, defined as the orbit
closure of $\Tk$ in the ``local'' topology (please see the next
section for precise definitions and statements). The translation
action by $\R^d$ is uniquely ergodic, so we get a
measure-preserving tiling dynamical system $(X_{\Tk},\R^d,\mu)$.
We are interested in its spectral properties, specifically, in the
discrete component of the spectrum which may be defined as the
closed linear span of the eigenfunctions in $L^2(X_{\Tk},\mu)$. In
particular, we would like to know when the tiling system is {\em
weakly mixing}, which means absence of non-trivial eigenfunctions.

Our results give a complete answer to these questions in terms of
the {\em expansion matrix} $\phi$ of the tiling, under the
assumption that it is diagonalizable over $\C$ and its
eigenvalues are algebraic
conjugates of the same multiplicity. Let
$\Lam=\{\lam_1,\ldots,\lam_{d}\}=\Spec(\phi)$ be the set of (real and complex)
eigenvalues of $\phi$. It is known \cite{Ken.thesis,LS} that all
$\lam_i$ are algebraic integers. Following Mauduit \cite{Maud}, we
say that they form a {\em Pisot family} if for every $\lam\in
\Lam$ and every Galois conjugate $\lam'$ of $\lam$, if
$\lam'\not\in \Lam$, then $|\lam'| < 1$. We prove that
$(X_{\Tk},\R^d,\mu)$ has a relatively dense set of eigenvalues
(equivalently, the set of eigenvalues of full rank $d$) if and
only if $\Lam$ is a Pisot family, and this is also equivalent to
$(X_{\Tk},\R^d,\mu)$ being not weakly mixing.
An example shows that if the multiplicities of the eigenvalues of $\phi$ are not equal, even if
$\Spec(\phi)$ is a Pisot family, the set of eigenvalues of the tiling dynamical system
(not to be confused with $\Spec(\phi)$) may fail to be relatively dense in $\R^d$.

Special cases of our theorem were established earlier: for
self-similar tilings of $\R^d$, with $d\le 2$, in \cite{soltil},
and for self-similar tilings of $\R^d$ with a pure dilation
expanding matrix $\theta I$, in \cite{sol-eigen}. The present
paper covers a much more general self-affine case.

Additional motivation for our work comes from the theory of {\em
aperiodic order} and mathematics of quasicrystals. Considering
specially chosen ``control points'' in the tiles, we obtain a
Delone set $\Ck$, that is, a uniformly discrete and relatively
dense subset of $\R^d$, which is a {\em substitution Delone set},
see \cite{lawa}. (We should note that in geometric analysis Delone
sets are usually called {\em separated nets}.) It can be viewed as
an atomic configuration, and it turns out that its {\em
diffraction spectrum} is, in a certain precise sense, a ``part''
of the dynamical spectrum of the system $(X_{\Tk},\R^d,\mu)$, with
the Bragg peaks (sharp bright spots on the diffraction picture)
coming from the eigenvalues, see \cite{Dw,LMS,Gouere,BL}. Thus,
for instance, weak mixing implies that there are no Bragg peaks,
which indicates a certain level of ``disorder.''

A Delone set $Y\subset \R^d$ is {\em Meyer} if it is
relatively dense and $Y-Y$ is uniformly discrete. Answering a
question of Lagarias \cite{Lag}, we showed in \cite{LS} that
having a relatively dense set of Bragg peaks is equivalent to $Y$
being Meyer, for a primitive substitution Delone set associated
with a self-affine tiling. Our results in this paper imply that
this is also characterized by the Pisot family condition (under
our assumptions on $\phi$). The notion of {\em Meyer set} proved to be important
in the study of aperiodic order, see e.g. \cite{AL, Moody, LMS2,
Lee, BLM}. In \cite{AL}, under the Meyer set assumption for a
substitution tiling, a computational algorithm is developed to
decide whether the dynamical system has pure discrete spectrum.
It should be noted that we do not address the question of
pure discrete spectrum in the present paper but focus on the
discrete spectral component.

\subsection{Structure of the proof.}
A criterion for $\bx\in \R^d$ to be an eigenvalue of the system
$(X_{\Tk},\R^d,\mu)$ was obtained in \cite{soltil,sol-eigen}. From
it, the necessity of the Pisot family condition follows rather
easily, see \cite{soltil,Robi}. For the converse, we need
information on the location of control points, which is a
manifestation of certain ``rigidity'' of self-affine tilings. For
a self-similar tiling of the plane $\C\approx \R^2$ with a complex
expansion constant $\lam$, a result of Kenyon \cite{Ken} says that
the control points are contained in $\Z[\lam]\ba$, for some
$\ba\in \C$. We need an extension of this statement to the
higher-dimensional self-affine case, which is a key  result for us
(see Theorem~\ref{isomorphicImageOfC-contained} below). This
result does not depend on the Pisot family condition. We prove it
using the techniques developed by Thurston \cite{Thur} in the
2-dimensional self-similar case and extended to the
$d$-dimensional self-affine case in \cite{Ken.thesis,KS}, where a
necessary condition (which may be called the ``Perron family
condition'') for $\phi$ to be an expansion map was obtained. It is
here that we use the assumption that $\phi$ is diagonalizable: the
analog of the main theorem in \cite{KS} is open even for a
$2\times 2$ Jordan block.


\section{Definitions and statement of results}

\noindent We briefly review the basic definitions of tilings and
substitution tilings (see \cite{LMS2,Robi} for more details). We begin with a set of types (or colors)
$\{1,\ldots, \kappa\}$, which we fix once and for all. A {\em
tile} in $\R^d$ is defined as a pair $T=(A,i)$ where $A=\supp(T)$
(the support of $T$) is a compact set in $\R^d$ which is the
closure of its interior, and $i=l(T)\in \{1,\ldots, \kappa\}$ is
the type of $T$. We let $g+T = (g+A,i)$ for $g\in \R^d$. We say
that a set $P$ of tiles is a {\em patch} if the number of tiles in
$P$ is finite and the tiles of $P$ have mutually disjoint
interiors. A tiling of $\R^d$ is a set $\Tk$ of tiles such that
$\R^d = \cup \{\supp(T) : T \in \Tk\}$ and distinct tiles have
disjoint interiors. Given a tiling $\Tk$, finite sets of tiles of
$\Tk$ are called $\Tk$-patches. For $A \subset \R^d$, let
$[A]^{\Tk} = \{T \in \Tk : \supp(T) \cap A \neq \emptyset\}$.

We always assume that any two $\Tk$-tiles with the same color are
translationally equivalent. (Hence there are finitely many
$\Tk$-tiles up to translation.)

We say that a tiling $\Tk$ has {\em finite local complexity (FLC)}
if for each radius $R > 0$ there are only finitely many
translational classes of patches whose support lies in some ball
of radius $R$.

A tiling $\Tk$ is said to be {\em repetitive} if translations of any given patch occur uniformly dense in
$\R^d$; more precisely, for any $\Tk$-patch $P$, there exists $R>0$ such that every ball of radius $R$ contains
a translated copy of $P$.

Given a tiling $\Tk$, we define the {\em tiling space} as the
orbit closure of $\Tk$ under the translation action: $X_{\Tk} =
\ov{\{-g + \Tk:\,g\in \R^d\}}$, in the well-known ``local
topology'': for a small $\eps>0$ two tilings $\Sk_1,\Sk_2$ are
$\eps$-close if $\Sk_1$ and $\Sk_2$ agree on the ball of radius
$\eps^{-1}$ around the origin, after a translation of size less
than $\eps$. It is known that $X_\Tk$ is compact whenever $\Tk$
has FLC. Thus we get a topological dynamical system
$(X_{\Tk},\R^d)$ where $\R^d$ acts by translations. This system is
minimal (i.e.\ every orbit is dense) whenever $\Tk$ is repetitive.
Let $\mu$ be an invariant Borel probability measure for the
action; then we get a measure-preserving system $(X_{\Tk}, \R^d,
\mu)$. Such a measure always exists; under the natural assumption
of {\em uniform patch frequencies}, it is unique, see \cite{LMS}.
Tiling dynamical system have been investigated in a large number of papers; we do not provide an exhaustive bibliography, but mention a few: \cite{Petersen,CS,HRS,Kel}.
They have also been studied as translation surfaces or $\R^d$-solenoids \cite{BG,Gamb}.

\begin{defi}
{\em A vector $\balpha =(\alpha_1,\ldots,\alpha_d) \in \R^d$ is
said to be an {\em eigenvalue} for the $\R^d$-action if there
exists an eigenfunction $f\in L^2(X_{\Tk},\mu)$, that is, $\
f\not\equiv 0$ and for all $g\in \R^d$ and $\mu$-almost all
$\Sk\in X_{\Tk}$,
\begin{equation} \label{def-eig1}
f(\Sk-g) = e^{2 \pi i \langle g, \alpha\rangle} f(\Sk).
\end{equation}
Here $\langle \cdot,\cdot \rangle$ denotes the standard scalar
product in $\R^d$.}
\end{defi}

Note that this ``eigenvalue'' is actually a vector. In physics it might be
called a ``wave vector.'' We can also speak about eigenvalues for
the topological dynamical system $(X_{\Tk},\R^d)$; then the eigenfunction should be in
$C(X_{\Tk})$ and the equation (\ref{def-eig1}) should hold everywhere.

Next we define substitution tilings. Let $\phi$ be an expanding
linear map in $\R^d$, which means that all its eigenvalues are
greater than one in modulus.
The following definition is
essentially due to Thurston \cite{Thur}.

\begin{defi}\label{def-subst}
{\em Let $\Ak = \{T_1,\ldots,T_{\kappa}\}$ be a finite set of
tiles in $\R^d$ such that $T_i=(A_i,i)$; we will call them {\em
prototiles}. Denote by $\Ak^+$ the set of patches made of tiles
each of which is a translate of one of $T_i$'s. We say that
$\omega: \Ak \to \Ak^+$ is a {\em tile-substitution} (or simply
{\em substitution}) with expansion map $\phi$ if there exist
finite sets $\Dk_{ij}\subset \R^d$ for $i,j \le \kappa$, such that
\begin{equation}\label{subdiv}
\om(T_j)= \{u+T_i:\ u\in \Dk_{ij},\ i=1,\ldots, \kappa\} \ \ \
\mbox{for} \  j\le \kappa,
\end{equation}
with
$$
\phi A_j  = \bigcup_{i=1}^{\kappa} (\Dk_{ij}+A_i).
$$
Here all sets in the right-hand side must have disjoint interiors;
it is possible for some of the $\Dk_{ij}$ to be empty.}
\end{defi}

The substitution (\ref{subdiv}) is extended to all translates of
prototiles by $\om(x+T_j)= \phi x + \om(T_j)$, and to patches and
tilings by $\om(P)=\cup\{\om(T):\ T\in P\}$. The substitution
$\om$ can be iterated, producing larger and larger patches
$\om^k(T_j)$. To the substitution $\om$ we associate its $\kappa
\times \kappa$ substitution matrix with the entries $\sharp
(\Dk_{ij})$. The substitution $\om$ is called {\em primitive} if
the substitution matrix is primitive. We say that $\Tk$ is a fixed
point of a substitution if $\om(\Tk) = \Tk$.

\begin{defi} \label{def-saf}
{\em A repetitive fixed point of a primitive tile-substitution
with FLC is called a {\em self-affine tiling}. It is called {\em
self-similar} if the expansion map is a similitude, that is,
$|\phi(x)| = \Th |x|$ for all $x\in \R^d$, with some $\Th>1$.}
\end{defi}

\begin{remark}
{\em 1. A fixed point of a  primitive tile-substitution is not
necessarily of finite local complexity, see \cite{Danzer,FraRob}.
Thus we have to assume FLC explicitly.

2. It is well-known (and easy to see, e.g.\ in the one-dimensional
case) that the fixed point may be repetitive even for a
non-primitive substitution. Conversely, the fixed point of a
primitive substitution need not be repetitive. (However, if the
tile-substitution is primitive and the fixed point tiling $\Tk$
has a tile which contains the origin in its interior, then $\Tk$
is repetitive \cite{Prag}.)

3. For a self-similar tiling of $\R^d$, with $d\le 2$, we can
speak of an expansion factor; it is a real number if $d=1$ and a
complex number if $d=2$ (we then view the plane as a complex
plane).}

\end{remark}

An important question, first raised by Thurston \cite{Thur}, is to
characterize which expanding linear maps may occur as expansion
maps for self-affine (self-similar) tilings. It is pointed out in
\cite{Thur} that in one dimension, $\lam$ is an expansion factor
if and only if $\Th=|\lam|$ is a {\em Perron number}, that is, an
algebraic integer greater than one whose Galois conjugates are all
strictly less than $\Th$ in modulus (necessity follows from the
Perron-Frobenius theorem and sufficiency follows from a result of
Lind \cite{Lind}). In two dimensions, Thurston \cite{Thur} proved
that if $\lam$ is a  complex expansion factor of a self-similar
tiling, then $\lam$ is a {\em complex Perron number}, that is, an
algebraic integer whose Galois conjugates, other than $\ov{\lam}$,
are all less than $|\lam|$ in modulus. The following theorem was
stated in \cite{Ken.thesis}, but complete proof was not available
until recently.

\begin{theorem} \cite{Ken.thesis,KS} \label{th-KS}
Let $\phi$ be a diagonalizable (over $\Compl$) expansion map on
$\R^d$, and let $\Tk$ be a self-affine tiling of $\R^d$ with
expansion $\phi$. Then

{\bf (i)} every eigenvalue of $\phi$ is an algebraic integer;

{\bf (ii)}
 if $\lam$ is an eigenvalue of $\phi$ of multiplicity $k$ and $\gam$ is
an algebraic conjugate of $\lam$, then either $|\gam| < |\lam|$, or $\gam$
is also an eigenvalue of $\phi$ of multiplicity greater or equal to $k$.
\end{theorem}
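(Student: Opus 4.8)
The plan is to distill the geometry of the tiling into a single finitely generated, $\phi$-invariant $\Z$-module on which $\phi$ acts by an integer matrix, and then to convert the metric rigidity of the tiling into an algebraic constraint on that matrix. For (i), I would first fix a consistent family of control points $\Ck=\{x(T):T\in\Tk\}$, chosen so that the substitution sends control points to control points, $\phi\,\Ck\subseteq\Ck$ (take $x(T_j)$ to be the control point of a designated tile of $\om(T_j)$ and pass to the fixed point). Set $\Gamma=\langle \Ck-\Ck\rangle_\Z$; then $\phi\Gamma\subseteq\Gamma$ is immediate. By FLC the set $V$ of difference vectors $x(T)-x(T')$ for pairs of tiles at bounded mutual distance is finite, and every difference $x(T)-x(T')$ is a sum of elements of $V$ obtained by walking through a chain of adjacent tiles; hence $\Gamma=\langle V\rangle_\Z$ is finitely generated, of some rank $r$, and of full rank (its real span surjects onto $\R^d$) since $\Ck$ is relatively dense. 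Thus $\phi$ acts on $\Gamma\cong\Z^r$ by an integer matrix $B$, the surjection $\Gamma\otimes\R\to\R^d$ intertwines $B$ with $\phi$, and every eigenvalue of $\phi$ is an eigenvalue of $B$, hence an algebraic integer, proving (i).

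For (ii), the kernel $W$ of $\Gamma\otimes\R\to\R^d$ is $B$-invariant (the ``internal space''), so $\Spec(B)$ splits into the physical part $\Spec(\phi)$ and the internal part $\Spec(\psi)$ with $\psi=B|_W$. Since $B$ is integral, the full Galois orbit of any $\lam\in\Spec(\phi)$ consists of eigenvalues of $B$, so each conjugate is either physical or internal, and (ii) is equivalent to the assertion that every internal conjugate $\gamma$ of $\lam$ satisfies $|\gamma|<|\lam|$, together with the multiplicity bound. The device I would use is the Galois link between coordinates: projecting $\Gamma$ onto the physical $\lam$-eigendirection by a functional $\ell_\lam$ with $\ell_\lam\circ\phi=\lam\,\ell_\lam$ produces a finitely generated $\Z[\lam]$-module in $\C$, and applying the automorphism $\lam\mapsto\gamma$ identifies its conjugate with the internal $\gamma$-coordinate of $\Gamma$.

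The core argument is a Thurston-style address/counting contradiction. Assume an internal conjugate $\gamma$ with $|\gamma|\geq|\lam|>1$. An iterated patch $\om^n(T_i)$ carries $\asymp|\det\phi|^n$ control points (here $|\det\phi|$ is the Perron--Frobenius eigenvalue of the substitution matrix $M$, since the positive tile-volume vector satisfies $v^{T}M=|\det\phi|\,v^{T}$), and their mutual differences are built from boundedly many digit differences scaled by powers of $\phi$. In the physical $\lam$-direction these differences occupy a region growing like $|\lam|^n$, whereas their Galois-conjugate internal $\gamma$-coordinates grow like $|\gamma|^n\geq|\lam|^n$; set against the uniform discreteness and relative denseness of $\Ck$ in $\R^d$, this internal spreading is too rapid to be carried by the Delone physical structure, forcing either a coincidence of distinct control points or a tile of empty interior. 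Running the same comparison inside the $\lam$-isotypic submodule of $\Gamma\otimes\Q$, where all conjugates occur with a common multiplicity, yields the bound $\mathrm{mult}(\gamma)\geq k$.

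The main obstacle is the quantitative form of this last step, especially the borderline case $|\gamma|=|\lam|$, where the growth-rate comparison is not strict and a crude volume estimate no longer suffices; one must instead track the lower-dimensional boundary substitution and count coincidences exactly. The multiplicity bookkeeping is equally delicate, and it is precisely here that diagonalizability of $\phi$ is indispensable: it guarantees that the eigenvalue multiplicities entering the argument are honest dimensions of eigenspaces (semisimplicity of the module), rather than Jordan-block artifacts — which is why the statement remains open already for a single $2\times 2$ Jordan block.
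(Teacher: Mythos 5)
First, note that the paper does not prove Theorem~\ref{th-KS} at all: it is quoted from \cite{Ken.thesis,KS}, and the closest material in the paper itself is the Appendix, where Propositions~\ref{prop-KenSol} and \ref{prop-KenSol2} reproduce (in generalized form) the key technical steps of the proof in \cite{KS}. Measured against that machinery, your part (i) is correct and is exactly the standard Thurston argument: $\phi\Ck\subset\Ck$, FLC makes $\Gamma=\langle\Ck-\Ck\rangle_\Z$ finitely generated via the chaining-through-adjacent-tiles decomposition, relative denseness gives full rank, and $\Spec(\phi)\subset\Spec(B)$ for an integer matrix $B$. No complaints there.

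Part (ii), however, has a genuine gap. The mechanism you propose --- a volume/counting contradiction comparing the $\asymp|\det\phi|^n$ control points of $\om^n(T_i)$ with the $|\gamma|^n$ spreading of their internal coordinates --- is not the argument that proves this theorem, and as you yourself concede, it does not close in precisely the two places that constitute the entire content of the statement: the borderline case $|\gamma|=|\lam|$ (where the growth comparison is not strict and no contradiction is supposed to arise, since the conclusion there is that $\gamma$ \emph{must be} an eigenvalue of $\phi$, not that it cannot exist) and the multiplicity inequality $\mathrm{mult}(\gamma)\ge k$ (which you assert by analogy rather than prove). A raw count also yields no contradiction even when $|\gamma|>|\lam|$: the internal space can accommodate arbitrarily many, arbitrarily spread points, so "too rapid internal spreading" is not by itself incompatible with the Delone property. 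What actually does the work in \cite{KS} (and in the paper's Appendix) is a rigidity argument: the chaining decomposition you used in (i) shows that the projection $f$ of $\Ck_\infty$ onto the $\gamma$-isotypic component is \emph{Lipschitz} and satisfies $f\circ\phi=A\circ f$ with $A$ expanding; $f$ then extends continuously to $\R^d$ (Prop.~\ref{prop-KenSol}(i)), is shown via Rademacher, Egorov, and a repetitivity/expansion "ergodicity" argument to be affine linear on the slices $a+G_\lam$ and then on $a+G$ (Lemma~\ref{pi-i-on-E-lamimin}, Prop.~\ref{prop-KenSol2}), and the resulting nonzero $\phi$-equivariant linear map $\R^d\to H$ forces $\gamma\in\Spec(\phi)$ with the stated multiplicity. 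It is in the slice-linearity step, where one needs $\|\phi u\|=|\lam|\,\|u\|$ on $G_\lam$, that diagonalizability enters --- not, as you suggest, through semisimplicity of the module $\Gamma\otimes\Q$. To repair your write-up you would need to replace the counting heuristic of the third paragraph with this Lipschitz-to-linear rigidity argument.
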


\begin{remark}
{\em 1. Note that if $|\gam|=|\lam|$ in part (ii) of the theorem,
then the multiplicities of $\gam$ and $\lam$ are the same.

2. It is conjectured that the condition on $\phi$ in the theorem
is also sufficient. There are partial results in this direction
\cite{Kenyon.construction}; see \cite{KS} for a discussion.}
\end{remark}

For a self-affine tiling $\Tk$, the
corresponding tiling dynamical system $(X_{\Tk}, \R^d)$ is
uniquely ergodic, see \cite{LMS2, Robi}. Denote by $\mu$ the
unique invariant probability measure.
There is a rich structure associated with self-affine tiling dynamical systems.
As a side remark, we mention that the substitution map $\om$ extends to an endomorphism of the tiling space,
which is hyperbolic in a certain sense, see \cite{AP}.
The partition of the tiling space according to the type of the tile containing the origin provides a Markov partition for $\om$. The situation is especially nice
when $\Tk$ is non-periodic, which is equivalent to $\om$ being invertible \cite{solucp}. In order to state our results we need the following.

\begin{defi} \cite{Maud}
{\em A set $\Lam$ of algebraic integers is called a {\em Pisot
family} if for every $\lam\in \Lam$, if $\gam$ is an algebraic
conjugate of $\lam$ and $\gam\not\in \Lam$, then $|\gam|< 1$.
Otherwise $\Lam$ is called {\em non-Pisot}.}
\end{defi}

In this paper we assume that:
\begin{itemize}
\item all the eigenvalues of $\phi$ are algebraic conjugates
    with the same multiplicity.
\end{itemize}
Let $Spec(\phi)$ be the set of all eigenvalues of $\phi$ (the spectrum of $\phi$).
By assumption, there exists a monic irreducible polynomial
$p(t) \in \Z[t]$ (the minimal polynomial) such that $p(\lam) = 0$
for all $\lam \in Spec(\phi)$.

\begin{theorem} \label{th-main}
Let $\Tk$ be a self-affine tiling of $\R^d$ with a diagonalizable
expansion map $\phi$. Suppose that all the eigenvalues of $\phi$
are algebraic conjugates with the same multiplicity. Then
the following are equivalent:\\
{\bf (i)} The set of eigenvalues of $(X_{\Tk},\R^d,\mu)$ is
    relatively dense  in $\R^d$.\\
{\bf (ii)} $Spec(\phi)$ is a Pisot family. \\
{\bf (iii)} The system $(X_{\Tk},\R^d,\mu)$ is not weakly
    mixing (i.e., it has eigenvalues other than ${\bf 0}$).
\end{theorem}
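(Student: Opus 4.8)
The plan is to prove the cycle of implications (i) $\Rightarrow$ (iii) $\Rightarrow$ (ii) $\Rightarrow$ (i), of which (i) $\Rightarrow$ (iii) is immediate: a relatively dense subset of $\R^d$ cannot reduce to $\{\bo\}$, so it contains a nonzero eigenvalue, and the system is not weakly mixing. The two substantial implications rest on the eigenvalue criterion of \cite{soltil,sol-eigen}, which I use as a black box: a vector $\balpha\in\R^d$ is an eigenvalue of $(\Xt,\R^d,\mu)$ if and only if
\begin{equation*}
\La \phi^n\gam,\balpha\Ra \longrightarrow 0 \pmod{\Z}, \qquad n\to\infty,
\end{equation*}
for every return vector $\gam$ (a difference of control points of equal-type tiles). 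Since the return vectors generate a finitely generated group $\Xi\subset\R^d$ and the condition is additive in $\gam$, it suffices to test it on a finite generating set of $\Xi$. The crucial geometric input is Theorem~\ref{isomorphicImageOfC-contained}, which locates the control points, and hence $\Xi$, inside an explicit finitely generated, $\phi$-invariant $\Z$-module realized as an embedding of a module over $\Z[\lam]$, where $\lam$ is a root of the minimal polynomial $p$. This is what makes the criterion amenable to number-theoretic arguments, and, importantly, it does not presuppose the Pisot family condition.

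To exploit this, I would write $K=\Q(\lam)$ with $[K:\Q]=s=\deg p$ and embeddings $\sig_1,\dots,\sig_s$ into $\C$, relabeled so that $\sig_1,\dots,\sig_r$ carry the eigenvalues of $\phi$ (the elements of $\Lam=\Spec(\phi)$, each occurring with common multiplicity $k$; these $r$ large places, weighted by $k$ and by the real/complex bookkeeping, account for all $d$ real dimensions) and $\sig_{r+1},\dots,\sig_s$ carry the remaining conjugates. Using the module structure of Theorem~\ref{isomorphicImageOfC-contained}, I would identify $\Xi$ with a full-rank sub-$\Z$-module of $K^{\,k}$ embedded into $\R^d$ through the large places $\sig_1,\dots,\sig_r$, with $\phi$ acting as multiplication by $\lam$. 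Pairing with a candidate $\balpha$ attached, via the trace form, to an element $\beta$ of the dual module then yields the key identity
\begin{equation*}
\La\phi^n\gam,\balpha\Ra \;\equiv\; -\sum_{j=r+1}^{s}\sig_j(\lam)^n\,\sig_j(\gam\beta)\pmod{\Z},
\end{equation*}
because the full trace $\mathrm{Tr}_{K/\Q}(\lam^n\gam\beta)$ is an integer. This single identity drives both remaining implications.

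For (ii) $\Rightarrow$ (i), the Pisot family hypothesis says exactly that $|\sig_j(\lam)|<1$ for $j>r$, so the right-hand side tends to $0$ for every $\gam\in\Xi$ and every $\beta$ in the dual module; hence each such $\balpha$ is an eigenvalue. The equal-multiplicity assumption guarantees that this dual module is again of full rank, so the resulting eigenvalues fill out a subgroup of $\R^d$ of rank $d$, which is relatively dense. For (iii) $\Rightarrow$ (ii) I argue by contraposition: if $\Spec(\phi)$ is non-Pisot there is a conjugate $\sig_{j_0}(\lam)$, $j_0>r$, with $|\sig_{j_0}(\lam)|\ge 1$; since $\gam\mapsto(\sig_{r+1}(\gam),\dots,\sig_s(\gam))$ has dense image on the full-rank module $\Xi$, a standard equidistribution and pigeonhole argument (as in \cite{soltil,Robi}) shows that the right-hand side above cannot tend to $0$ modulo $\Z$ for any $\beta\ne0$, so no nonzero eigenvalue exists and the system is weakly mixing.

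The main obstacle is not the trace computation but setting up the algebraic dictionary cleanly: converting the geometric module of Theorem~\ref{isomorphicImageOfC-contained} into a genuinely full-rank module over $\Z[\lam]$ inside $K^{\,k}$ on which $\phi$ acts as multiplication by $\lam$, and then verifying that the equal-multiplicity hypothesis forces the dual module, and hence the eigenvalue group, to have full rank $d$. Controlling the multiplicity $k$ at this step is exactly where the hypothesis of equal multiplicities is indispensable; the example mentioned in the introduction shows that without it the constructed eigenvalue group can fail to be relatively dense even under the Pisot family condition.
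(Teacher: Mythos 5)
Your strategy is essentially the paper's: both implications are run through the eigenvalue criterion of \cite{soltil,sol-eigen} together with Theorem~\ref{isomorphicImageOfC-contained}, and your trace-form pairing with a dual element $\beta$ is precisely the paper's construction of the vectors $\bbeta_j$ (defined by inverting the coordinates of $\Fk(\balpha_j)$), for which $\langle \phi^n\balpha_j,(\phi^T)^l\bbeta_j\rangle$ is the power sum over $\Spec(\phi)$ and hence differs from an integer by the power sum over the remaining (small) conjugates. Two points, however, are genuine gaps rather than differences of presentation. First, the criterion you black-box is incomplete: \cite[Th.\,3.13]{sol-eigen} requires, besides $\dist(\langle\phi^n\bx,\balpha\rangle,\Z)\to 0$ for all $\bx\in\Xi$, the \emph{exact} condition $e^{2\pi i\langle\bx,\balpha\rangle}=1$ for all $\bx$ in the period lattice $\Kk=\{\bx:\Tk-\bx=\Tk\}$. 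Your candidates are only shown to satisfy the asymptotic condition, so sufficiency in (ii)$\Rightarrow$(i) is not established when $\Kk\neq\{\bo\}$. The paper repairs this by combining uniform convergence over $\Xi$ with the discreteness of the subgroup $\Kk$ to force $\langle\phi^n\bx,\cdot\rangle\in\Z$ exactly for all $n$ past a threshold $K$, and then takes $(\rho^T)^{-1}(\phi^T)^{K+l}\bbeta_j$ as the eigenvalues; some such step is needed in your argument too.

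Second, in (iii)$\Rightarrow$(ii) your displayed congruence holds only for $\balpha$ attached via the trace form to a $\beta$ in the dual module, since it is exactly there that the full trace is an integer. An arbitrary nonzero eigenvalue $\bgamma$ yields $\langle\phi^n\bx,\bgamma\rangle=\sum_{k\le s} c_k\lambda_k^n+\sum_{k>s}(c_k\lambda_k^n+\ov{c}_k\ov{\lambda}_k^n)$ with \emph{arbitrary} complex coefficients $c_k$ and no integer trace to subtract, so showing that "the right-hand side cannot tend to $0$ mod $\Z$ for any $\beta\neq 0$" rules out only the special trace-form candidates, not weak mixing. What is actually needed is the theorem of Vijayaraghavan \cite[Th.\,4]{Vij}: if $\dist\bigl(\sum_k c_k\lambda_k^n,\Z\bigr)\to 0$ with some $c_k\neq 0$ and all $|\lambda_k|>1$, then the $\lambda_k$ form a Pisot family. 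This is how the paper's Lemma~\ref{eigenvalueImplyPisotFamily} closes the loop; your "equidistribution and pigeonhole" gesture points in the right direction (and the cited \cite{soltil,Robi} do contain the argument), but as written it is applied to the wrong class of eigenvalues.
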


\begin{remark}
{\em 1. In part (i) we could equally well talk about the
topological dynamical system $(X_{\Tk},\R^d)$ since every
eigenfunction may be chosen to be continuous \cite{sol-eigen}.

2. The necessity of the Pisot family condition for self-affine tiling systems that are not weakly mixing
was proved by Robinson \cite{Robi} in a more
general case; it is a consequence of \cite{soltil}.}
\end{remark}

\begin{example} {\em (i) In Fig.\,2 and Fig.\,3 of \cite{KS} a self-affine tiling $\Tk_1$ is given, with the diagonal expansion matrix $\Diag[\lam_1,\lam_2]$ where
$\lam_1\approx 2.19869$ and $\lam_2\approx -1.91223$ are roots of the polynomial $x^3-x^2-4x+3$. Observe that $\{\lam_1,\lam_2\}$ is a Pisot family, hence
the set of eigenvalues for the associated dynamical system is relatively dense in $\R^2$.

(ii) The assumption of equal multiplicity cannot be dropped from Theorem~\ref{th-main}.
Indeed, consider the tiling $\Tk$ which is a ``direct product'' of $\Tk_1$ defined in (i) and
a self-similar tiling $\Tk_2$ of $\R$ with expansion $\lam_1$.
Such a tiling $\Tk_2$ exists by \cite{Lind} (see \cite{Solnotes} for more details)
since $\lam_1$ is a Perron number. Direct product substitution tilings have been studied by S. Mozes \cite{mozes} and N. P. Frank \cite{natalie}. It is
easy to see that the set of eigenvalues for the dynamical system $(\Xt,\R^3)$ is obtained as a direct sum of those which
correspond to the systems $(X_{\Tk_1},\R^2)$ and $(X_{\Tk_2},\R)$. By \cite{soltil}, the system $(X_{\Tk_2},\R)$ is
weakly mixing, because $\lam_1$ is not a Pisot number. Thus, the tiling $\Tk$ has expansion map
$\phi=\Diag[\lam_1,\lam_2,\lam_1]$ for which $\Spec(\phi)$ is a Pisot family, but the associated dynamical
system does not have a relatively dense set of eigenvalues. }
\end{example}

Next we state our result on Meyer sets.
Recall that a Delone set is a relatively dense and uniformly
discrete subset of $\R^d$.

\begin{defi}
{\em A Delone set $Y$ is called a {\em Meyer set} if
$Y-Y$ is uniformly discrete.}
\end{defi}

There is a standard way to choose distinguished points in the tiles of a self-affine tiling so that they form a $\phi$-invariant Delone set. They are called
{\em control points}.

\begin{defi} \cite{Thur,Prag}
{\em Let $\Tk$ be a fixed point of a primitive substitution with
expansion map $\phi$. For each $\Tk$-tile $T$, fix a tile $\gamma
T$ in the patch $\omega (T)$; choose $\gamma T$ with the same
relative position for all tiles of the same type. This defines a
map $\gamma : \Tk \to \Tk$ called the {\em tile map}. Then define
the {\em control point} for a tile $T \in \Tk$ by
\[ \{c(T)\} = \bigcap_{n=0}^{\infty} \phi^{-n}(\gamma^n T).\]
}
\end{defi}

\noindent The control points have the following properties:
\begin{itemize}
\item[(a)] $T' = T + c(T') - c(T)$, for any tiles $T, T'$ of the
same type; \item[(b)] $\phi(c(T)) = c(\gamma T)$, for $T \in \Tk$.
\end{itemize}
Control points are also fixed for tiles of any tiling $\mathcal{S}
\in X_{\Tk}$: they have the same relative position as in
$\Tk$-tiles. Note that the choice of control points is non-unique, but there are only finitely many possibilities, determined by the choice of the tile map.

Let
\[ \Ck:= \Ck(\Tk) = \{ c(T) : T \in \Tk \}\]
be a set of control points of the tiling $\Tk$ in $\R^d$. Let
$$
\Xi := \Xi(\Tk) = \bigcup_{i=1}^{\kappa} (\Ck_i - \Ck_i),
$$
where $\Ck_i$ is the set of control points of tiles of type $i$.
Equivalently, $\Xi$ is the set of translation vectors between two
$\Tk$-tiles of the same type.

\begin{cor} \label{cor-Meyer}
Let $\Tk$ be a self-affine tiling of $\R^d$ with a diagonalizable
expansion map $\phi$. Suppose that all the eigenvalues of $\phi$
are algebraic conjugates with the same multiplicity. Then the set
of control points $\Ck$ is Meyer if and only if $Spec(\phi)$ is a
Pisot family.
\end{cor}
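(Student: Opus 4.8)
The plan is to obtain the corollary by chaining the authors' earlier characterization of the Meyer property from \cite{LS} with the equivalence (i)$\Leftrightarrow$(ii) of Theorem~\ref{th-main}, using relative density of the point spectrum as the common link. First I would check that the control-point set $\Ck=\Ck(\Tk)$ is a primitive substitution Delone set of the kind to which \cite{LS} applies: property (b), $\phi(c(T))=c(\gamma T)$, shows $\phi\Ck\subseteq\Ck$, so that $\Ck$ is $\phi$-invariant, and primitivity of $\om$ makes it the fixed point of a genuine self-replicating rule on the finitely many colored components $\Ck_1,\dots,\Ck_\kappa$. Crucially, this verification uses neither diagonalizability of $\phi$ nor the equal-multiplicity hypothesis; those enter only when Theorem~\ref{th-main} is invoked.

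Next I would apply the main theorem of \cite{LS}: for such a set, $\Ck$ is Meyer if and only if its set of Bragg peaks is relatively dense in $\R^d$. To feed this into Theorem~\ref{th-main} I must identify relative density of the Bragg peaks with relative density of the eigenvalue set of $(X_{\Tk},\R^d,\mu)$. The Bragg peaks form a subset of the dynamical point spectrum, by the diffraction--dynamics correspondence of \cite{Dw,LMS,Gouere,BL}, and I claim these two relative-density conditions are equivalent. Granting this claim, \cite{LS} reads ``$\Ck$ is Meyer iff the eigenvalue set of $(X_{\Tk},\R^d,\mu)$ is relatively dense,'' and then (i)$\Leftrightarrow$(ii) of Theorem~\ref{th-main} rewrites relative density of the eigenvalues as the Pisot family property of $\Spec(\phi)$, closing the chain.

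The step needing the most care is precisely this relative-density identification between Bragg peaks and dynamical eigenvalues. One inclusion is free: every Bragg peak is an eigenvalue, so relative density of the Bragg peaks immediately yields relative density of the eigenvalues. The reverse is the delicate direction, since a particular choice of scattering weights could in principle \emph{extinguish} enough peaks to destroy relative density even while the full eigenvalue group stays relatively dense. I would resolve this by working with the weighting used in \cite{LS}, for which no such systematic extinction occurs, so that the Bragg peaks remain relatively dense exactly when the eigenvalue group does and relative density transfers both ways. Once this bridge is fixed the corollary is immediate; I would also note that the equivalence ``Meyer $\Leftrightarrow$ relatively dense spectrum'' from \cite{LS} requires no hypothesis on $\phi$ beyond expansiveness, so the full strength of the diagonalizability and equal-multiplicity assumptions is consumed entirely by the Pisot family reformulation supplied by Theorem~\ref{th-main}.
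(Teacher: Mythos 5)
Your proposal is correct and follows essentially the same route as the paper, which obtains the corollary immediately by combining the equivalence (i)$\Leftrightarrow$(ii) of Theorem~\ref{th-main} with \cite[Th.\,4.14]{LS} (see also the step (ii)$\Leftrightarrow$(iv) in the proof of Theorem~\ref{PisotFamily-relDenseEigenvalues}). The only difference is that the paper invokes \cite[Th.\,4.14]{LS} directly in the form ``$\Ck$ is Meyer if and only if the set of dynamical eigenvalues is relatively dense,'' so the Bragg-peak versus eigenvalue bridge you spend the most care on is already internal to the cited theorem and requires no separate argument here.
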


This is an immediate consequence of Theorem~\ref{th-main} and
\cite[Th.\,4.14]{LS}.


\section{Preliminaries}
\noindent Recall that $\phi$ is assumed to be diagonalizable over
$\C$. For a complex eigenvalue $\lam$ of $\phi$, the $2 \times 2$
diagonal block $\left[
\begin{array}{ll}
         \lambda & 0 \\
         0 & \overline{\lambda}
         \end{array} \right] $ is similar to a real $2 \times 2$ matrix
\be \label{eq-mult}
          \left[ \begin{array}{rr}
         a & -b \\
         b & a
         \end{array} \right] = S^{-1}\left[\begin{array}{ll}
         \lambda & 0 \\
         0 & \overline{\lambda}
         \end{array} \right]S,
\ee where $\lam = a + ib$, $a, b \in
         \R$, and $S = \frac{1}{\sqrt{2}}\left[\begin{array}{rr} 1 & i \\ 1 & -i \end{array} \right]$.
Since $\phi$ is diagonalizable
over $\C$, we can assume, by appropriate choice of basis, that
$\phi$ is in the real canonical form of the linear map, see
\cite[Th.\,6.4.2]{HS}. This means that $\phi$ is block-diagonal,
with the diagonal entries equal to $\lam$ corresponding to real
eigenvalues, and diagonal $2 \times 2$ blocks of the form $\left[
\begin{array}{rr}
         a_j & -b_j \\
         b_j & a_j
         \end{array} \right]$ corresponding to complex eigenvalues
         $a_j + i b_j$.

Let $J$ be the multiplicity of each eigenvalue of $\phi$. We can
write
\[ \phi = \left[
\begin{array}{ccc}
                  \psi_1 & \cdots & {O} \\
                  \vdots & \ddots & \vdots \\
                  {O} & \cdots & \psi_J
                  \end{array} \right] \  \mbox{and} \ \
                  \psi_j = \psi := \left[
                  \begin{array}{ccc}
                  A_{1} & \cdots & 0 \\
                  \vdots & \ddots & \vdots \\
                  0 & \cdots & A_{s+t}
                  \end{array} \right] \ \ \ \mbox{for any $1 \le j \le J$} \]
where $A_{k}$ is a real $1\times 1$ matrix for $1 \le k \le s$, a
real $2\times 2$ matrix of the form $\left[
\begin{array}{lr}
         a_{k} & -b_{k} \\
         b_{k} & a_{k}
        \end{array} \right] $
for $s + 1 \le k \le s + t$, and ${O}$ is the $(s+2t) \times (s+2t)$
zero matrix. Then the eigenvalues of $\psi$ are
$$\lam_{1}, \ldots, \lam_{s},\lam_{s+1},\ov{\lam}_{s+1},\ldots,\lam_{s + t},\ov{\lam}_{s+t}.$$
Let $m: = s +2t$; this is the size of the matrix $\psi$.
For each $1 \le j \le J$, let
\begin{equation}
H_{j} = \{0\}^{(j-1)m} \times \R^{m} \times
\{0\}^{d- jm}\,. \nonumber
\end{equation}
Further, for each $H_{j}$ we have the direct sum decomposition
$$
H_j = \bigoplus_{k=1}^{s + t} E_{jk},
$$
such that each $E_{jk}$ is $\phi,\phi^{-1}$-invariant and
$\phi|_{E_{jk}} \approx A_{k}$, identifying $E_{jk}$ with $\R$ or
$\R^2$. Define a norm on $\R^d$ by
\begin{equation} \label{def-norm}
\|\bx\| = \max_{j,k}\|\bx_{jk}\|\ \ \mbox{for}\
\bx= \sum_{j=1}^J \sum_{k=1}^{s+t} \bx_{jk},\ \bx_{jk} \in E_{jk},
\end{equation}
where $\|\bx_{jk}\|$ is the Euclidean norm on $E_{jk}$, so that
$\|\phi \bx_{jk}\| = |\lam_{jk}| \|\bx_{jk}\|$. Let
$$
\Q[\phi]:= \{p(\phi):\ p \in \Q[x]\},\ \ \Z[\phi]:= \{p(\phi):\ p \in \Z[x]\}.
$$
Let $P_{j}$ be the canonical projection of $\R^d$ onto $H_{j}$
such that \be \label{def-projection} P_{j}(\bx) = \bx_{j}, \ee
where $\bx = \bx_{1} + \cdots + \bx_{J}$ and $\bx_{j} \in H_{j}$
with $1 \le j \le J$. Let $\phi_{j} = \phi|_{H_{j}}$.

\medskip

We define $\balpha_{j} \in H_{j}$ such that for each $1 \le n \le
d$, \be \label{def-alpha} (\balpha_{j})_{n} =
\left\{\begin{array}{ll}
                             1 \ \ \ & \mbox{if} \ \  (j-1)m + 1 \le n \le jm;\\
                             0 \ \ \ & \mbox{else} \,.
                           \end{array} \right.
\ee

The next theorem is a key result of the paper; it is the
manifestation of rigidity alluded in the Introduction.

\begin{theorem} \label{isomorphicImageOfC-contained}
Let $\Tk$ be a self-affine tiling of $\R^d$ with a diagonalizable
expansion map $\phi$. Suppose that all the eigenvalues of $\phi$
are algebraic conjugates with the same multiplicity. Then there
exists an isomorphism $\rho: \R^d \to \R^d$ such that
\[ \mbox{$\rho \phi =
\phi \rho$ \ \ \ and} \ \ \ \mathcal{C} \subset \rho(\Z[\phi] \balpha_{1} + \cdots +
\Z[\phi] \balpha_{J})\,,\] where $\balpha_j$, $1 \le j \le J$, are given as above.
\end{theorem}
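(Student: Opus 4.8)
The plan is to show that the control points are arithmetically rigid in three stages: first I would locate them inside a finitely generated $\Z[\phi]$-module built from the substitution data, then bound the rank of that module by the multiplicity $J$ using diagonalizability, and finally transport it into the standard module $M:=\Z[\phi]\balpha_1+\cdots+\Z[\phi]\balpha_J$ by an isomorphism commuting with $\phi$. For the first stage I would normalize $\om$ (replacing it by a power if necessary) so that some tile is fixed by the tile map $\gamma$; by property (b) its control point satisfies $\phi c=c$ and hence equals the origin, producing a seed tile $T_*$ with $c(T_*)=\b0$. Using recognizability of primitive FLC substitution tilings, every tile $T$ lies in a unique level-$n$ supertile, so iterating $c(T)=\phi\,c(S)+d$, with $S$ the parent of $T$ and $d$ its relative offset, yields an expansion $c(T)=\sum_{k=0}^{n-1}\phi^{k}d_{k}$ in which each $d_k$ ranges over the finite set $\Delta$ of offsets $u+c(T_i)-\phi\,c(T_j)$ arising from the sets $\Dk_{ij}$. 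For $n$ large $T$ and the seed share the supertile $\om^{n}(T_*)$, whose control point is $\b0$, so the remainder drops out and $\Ck\subseteq\Gamma:=\Z[\phi]\langle\Delta\rangle$; since $p(t)\in\Z[t]$ is monic, $\phi^{\deg p}\in\Z I+\cdots+\Z\phi^{\deg p-1}$, so $\Gamma$ is finitely generated over $\Z$ and $\phi$-invariant.

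For the second stage, diagonalizability makes the minimal polynomial of $\phi$ the squarefree irreducible $p$, so $K:=\Q[\phi]\cong\Q[t]/(p)$ is a number field of degree $n=\deg p$. The $\Q$-space $\Gamma_\Q:=\Gamma\otimes_\Z\Q$ is then a finite-dimensional $K$-module, hence free; write $a:=\dim_K\Gamma_\Q$. To bound $a$ I would compare the two real structures on $\Gamma_\Q$: extending scalars gives $\Gamma_\Q\otimes_\Q\R\cong(K\otimes_\Q\R)^{a}=\prod_{v}K_v^{\,a}$ over the archimedean places $v$ of $K$, and the natural map to $\R^d$ realizes exactly the places belonging to $\Spec(\phi)$ while the ``missing conjugates'' land in the kernel. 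Thus $\R\Gamma_\Q\subseteq\R^d$ has real dimension $am$, and $am\le d=mJ$ forces $a\le J$. This is precisely where diagonalizability is indispensable: for a Jordan block $K$ would fail to be a field and $\Gamma_\Q$ could carry torsion, in line with the warning in the excerpt.

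For the third stage, since $a\le J=\dim_K V$ with $V:=\Q[\phi]\balpha_1+\cdots+\Q[\phi]\balpha_J$ (the $\balpha_j$ are cyclic for $\phi|_{H_j}$, hence $K$-independent), I would send a $K$-basis of $\Gamma_\Q$ into $V$ by a map lying in the real commutant of $\phi$, clear a common denominator so that the finitely many generators of $\Gamma$ fall into $M$, and let $\rho$ be the inverse of the resulting invertible map; then $\rho\phi=\phi\rho$ and $\Ck\subseteq\Gamma\subseteq\rho(M)$. The main obstacle is the first stage: converting the heuristic digit expansion into a rigorous statement requires recognizability together with the fact that the supertiles of the seed exhaust $\R^d$ — exactly the geometric rigidity of self-affine tilings supplied by the Thurston--Kenyon--Solomyak technique — and one must additionally verify in the third stage that the $K$-linear identification is realized by a genuine $\R$-linear automorphism commuting with $\phi$.
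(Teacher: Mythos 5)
Your Stage 2 contains the fatal gap. You claim that since $\Gamma_\Q\cong K^a$ as a module over $K=\Q[\phi]$, extending scalars and mapping to $\R^d$ gives $\dim_\R(\R\Gamma_\Q)=am$, whence $a\le J$. This is false: the multiplication map $\Gamma_\Q\otimes_\Q\R\to\R^d$ can have a much larger kernel than the components at the ``missing'' places. Already for $d=1$, $\phi=2$, the group $\Z+\Z\sqrt 2$ has $K$-rank $2$ while its real span is $1$-dimensional; in general a finitely generated $\phi$-invariant subgroup of $\R^d$ containing $\Ck$ can have $\Q[\phi]$-rank far exceeding $J$, and the correct conclusion from your tensor computation is only $\dim_\R(\R\Gamma_\Q)\le \min(a,J)\,m$, which yields no upper bound on $a$. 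This is precisely the possibility the paper must confront: its module $\Dk=\bigoplus_j\bigoplus_{t=1}^{r_j}\Q[\phi](a_{jt}\balpha_j)$ explicitly allows ranks $r_j>1$, with the extra generators $a_{jt}$ coming from real numbers linearly independent over $\Q$. Ruling out the contribution of these extra generators to the control points is the entire content of the theorem, and no algebraic dimension count can do it.

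The paper's mechanism is analytic, not algebraic: after a preliminary change of variables $\tau$ (commuting with $\phi$) that places representatives $\balpha_1,\ldots,\balpha_J$ inside $\Ck$, one defines the $\Q[\phi]$-module projections $\sigma_j:\Dk\to\Q[\phi]\balpha_j$ killing the extra generators, proves via Thurston's estimate that their restrictions to $\Ck$ are Lipschitz, and then shows (Propositions~\ref{prop-KenSol} and~\ref{prop-KenSol2} together with Lemma~\ref{G-contain-all-eigenspaces}, using Rademacher's theorem, Egorov, a density-point argument, repetitivity, and the equal-multiplicity/conjugacy hypotheses to get $G=\R^d$) that these maps extend to genuine $\R$-linear maps commuting with $\phi$; since $\sum_j\sigma_j'$ fixes a basis it is the identity, which forces $\Ck$ into the rank-$J$ submodule. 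You invoke the ``Thurston--Kenyon--Solomyak technique'' only for the digit expansion in Stage 1, which is the elementary part (and is essentially how the paper gets finite generation from FLC); your proposal contains no substitute for the rigidity argument where it is actually needed. Stage 3 would be salvageable if $a\le J$ were established (relative denseness of $\Ck$ gives $a\ge J$, so one would get $a=J$ and could assemble $\rho$ componentwise), but as written the proof does not go through.
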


The reason we call this ``rigidity'' is by analogy with \cite[Th.\,9]{Ken} (see the discussion at the beginning of the proof in \cite{Ken}).

We give a proof of Theorem~\ref{isomorphicImageOfC-contained} in Section \ref{Structure of
control point set} below and make use of it in proving the main
theorem in Section \ref{Meyer-property}. Note that the choice of
$\balpha_j$ is rather arbitrary; it is ``hidden'' in the linear
isomorphism $\rho$.

Now we continue with the preliminaries; we
need to handle the real and complex eigenvalues a little bit
differently.
Consider the linear injective map $\Fk:\R^m\,\to \R^s\oplus
\C^{2t}$ given by \be \label{def-Fk} \Fk(x_1,\ldots, x_s,
x_{s+1},\ldots,x_{s+2t}) =\ee$$ =\left(x_1,\ldots, x_s,
\frac{x_{s+1}+ix_{s+2}}{\sqrt{2}},
\frac{x_{s+1}-ix_{s+2}}{\sqrt{2}},\ldots, \frac{x_{s+2t-1} +
ix_{s+2t}}{\sqrt{2}},
\frac{x_{s+2t-1}-ix_{s+2t}}{\sqrt{2}}\right)\,. $$ In other words,
identifying $H_j$ with $\R^m$, we apply the transformation $S$
from (\ref{eq-mult}) in every subspace $E_{jk}$,
$k=s+1,\ldots,s+t$. In view of (\ref{eq-mult}), we have \be
\label{eq-mult2} \Fk(\psi \bx) = D \Fk(\bx)\ \ \ \mbox{and}\ \ \
\Fk(\psi^T \bx) = \ov{D} \Fk(\bx), \ee where \be \label{def-diag}
D=\Diag[\lam_1,\ldots,\lam_s,\lam_{s+1},\ov{\lam}_{s+1}\ldots,
\lam_{s+t},\ov{\lam}_{s+t}]\ee is a diagonal matrix.

\medskip

The following lemma is well-known and easy to prove using the
Vandermonde matrix.

\begin{lemma} \label{independent-phi-alphas-I}
Let $D$ be a diagonal matrix on $\C^m$ with distinct complex
eigenvalues. Let $\bz = [z_1, \cdots, z_m ]^T \in \C^m$ be such
that $z_k \neq 0$ for all  $1 \le k \le m$. Then $\{\bz, D \bz,
\cdots, D^{m-1} \bz \}$ is linearly independent over $\C$.
\end{lemma}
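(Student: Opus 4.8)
The plan is to reduce the claim to the nonvanishing of a Vandermonde determinant. Write $D = \Diag[d_1,\ldots,d_m]$, where the $d_k$ are pairwise distinct by hypothesis. Then the $k$-th coordinate of $D^j\bz$ equals $d_k^{\,j} z_k$, so the $m\times m$ matrix $M$ whose columns are $\bz, D\bz,\ldots,D^{m-1}\bz$ has entries $M_{kj}=d_k^{\,j} z_k$ for $0\le j\le m-1$. The vectors are linearly independent over $\C$ precisely when $\det M\neq 0$.

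First I would factor the (nonzero) scalar $z_k$ out of the $k$-th row, writing $M=\Diag[z_1,\ldots,z_m]\,V$, where $V_{kj}=d_k^{\,j}$ is the standard Vandermonde matrix in the nodes $d_1,\ldots,d_m$. Then $\det M=\bigl(\prod_{k=1}^m z_k\bigr)\prod_{1\le k<l\le m}(d_l-d_k)$. Since each $z_k\neq 0$ and the $d_k$ are distinct, both factors are nonzero, so $\det M\neq 0$ and the claim follows. Equivalently, one can argue directly without determinants: a relation $\sum_{j=0}^{m-1} c_j D^j\bz=\mathbf{0}$ reads, in the $k$-th coordinate, $z_k\,p(d_k)=0$ with $p(t)=\sum_{j=0}^{m-1}c_j t^j$; since $z_k\neq 0$ this forces $p(d_k)=0$ for all $k$, i.e.\ the polynomial $p$ of degree $\le m-1$ vanishes at the $m$ distinct points $d_k$, whence $p\equiv 0$ and all $c_j=0$.

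There is no genuine obstacle here: the statement is elementary, and the only points to watch are the index bookkeeping (the columns are indexed by the powers $0,\ldots,m-1$, giving exactly $m$ columns, matching the size of the square matrix) and the role of the two hypotheses, namely that the eigenvalues be distinct (which makes the Vandermonde factor nonzero) and that every $z_k$ be nonzero (which makes the diagonal factor nonzero).
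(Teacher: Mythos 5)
Your proof is correct and follows exactly the route the paper indicates: the paper states the lemma without proof, remarking only that it is "well-known and easy to prove using the Vandermonde matrix," and your factorization $M=\Diag[z_1,\ldots,z_m]\,V$ with the Vandermonde determinant (or the equivalent polynomial-vanishing argument) is precisely that argument, carried out in full.
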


\begin{cor} \label{independent-phi-alphas}
Suppose that $\bx \in \R^m$ is such that $\bz=\Fk(\bx)$ has all
$(m)$ non-zero coordinates. Then both $\{\bx,
\psi\bx,\ldots,\psi^{m-1}\bx\}$ and $\{\bx, \psi^T \bx,\ldots,
(\psi^T)^{m-1}\bx\}$ are linearly independent over $\R$.
\end{cor}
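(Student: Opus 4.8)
The plan is to transport the linear-independence question from the real matrix $\psi$ acting on $\R^m$ to the diagonal matrix $D$ acting on $\C^m$, using the injective intertwining map $\Fk$, and then to invoke Lemma~\ref{independent-phi-alphas-I}. Put $\bz = \Fk(\bx)$; by hypothesis all $m$ coordinates of $\bz$ are nonzero.

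First I would treat the $\psi$-orbit. Suppose $\sum_{k=0}^{m-1} c_k \psi^k \bx = \b0$ with $c_k \in \R$. Applying the linear map $\Fk$ and iterating the identity $\Fk(\psi \bx) = D\,\Fk(\bx)$ from (\ref{eq-mult2}), so that $\Fk(\psi^k \bx) = D^k \bz$, converts this into the relation $\sum_{k=0}^{m-1} c_k D^k \bz = \b0$ in $\C^m$. Since $D$ is diagonal with entries $\lam_1,\ldots,\lam_s,\lam_{s+1},\ov{\lam}_{s+1},\ldots,\lam_{s+t},\ov{\lam}_{s+t}$, which are exactly the eigenvalues of $\psi$, and these are pairwise distinct, and since $\bz$ has all nonzero coordinates, Lemma~\ref{independent-phi-alphas-I} yields that $\{\bz, D\bz,\ldots,D^{m-1}\bz\}$ is linearly independent over $\C$. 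Hence all $c_k = 0$, and $\{\bx,\psi\bx,\ldots,\psi^{m-1}\bx\}$ is independent over $\R$.

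For the $\psi^T$-orbit I would run the identical argument, now using the second identity $\Fk(\psi^T \bx) = \ov{D}\,\Fk(\bx)$ in (\ref{eq-mult2}), so that a relation $\sum_{k=0}^{m-1} c_k (\psi^T)^k \bx = \b0$ becomes $\sum_{k=0}^{m-1} c_k \ov{D}^{\,k}\bz = \b0$. Because $\ov{D}$ is again diagonal with distinct entries (complex conjugation fixes the real eigenvalues, permutes each conjugate pair, and is injective, so distinctness is preserved), Lemma~\ref{independent-phi-alphas-I} applies verbatim and forces all $c_k = 0$.

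The argument is in essence a single change of coordinates via $\Fk$, so there is no serious obstacle. The only point deserving a word of justification is that the diagonal entries of $D$ are pairwise distinct; this is where the standing hypothesis on $\phi$ enters, ensuring that $\psi$, a single block carrying each eigenvalue of $\phi$ exactly once, has simple spectrum (the minimal polynomial $p$ being irreducible has distinct roots), so that Lemma~\ref{independent-phi-alphas-I} is genuinely applicable.
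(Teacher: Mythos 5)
Your proof is correct and follows essentially the same route as the paper: transport the relation through the injective intertwiner $\Fk$ using (\ref{eq-mult2}) to reduce to the diagonal matrices $D$ and $\ov{D}$, then apply Lemma~\ref{independent-phi-alphas-I}. Your added remark that the diagonal entries of $D$ are pairwise distinct (since $\psi$ carries each root of the irreducible polynomial once) is a worthwhile explicit check that the paper leaves implicit.
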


\begin{sloppypar}
\begin{proof}
We have
$$
\Fk(\{\bx,\ldots,\psi^{m-1}\bx\}) = \{\bz,\ldots,D^{m-1}\bz\}
$$
by (\ref{eq-mult2}). By Lemma~\ref{independent-phi-alphas-I}, the
set $\{\bz,\ldots,D^{m-1}\bz\}$ is independent over $\C$ and hence
$\{\bx, \psi\bx,\ldots,\psi^{m-1}\bx\}$ is independent over $\R$,
using the fact that $\Fk$ is injective. The proof for the second set (with
transpose matrices) is exactly the same. \end{proof}
\end{sloppypar}

\begin{cor} \label{basisWithAlphas}
The set $W := \{\balpha_{1}, \ldots, \phi^{m-1} \balpha_{1},
\ldots, \balpha_{J}, \ldots, \phi^{m-1}\balpha_{J}\}$ forms a
basis of $\R^d$.
\end{cor}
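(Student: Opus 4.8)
The plan is to observe that $W$ has exactly $d$ elements and then prove those $d$ vectors are linearly independent; a set of $d$ independent vectors in $\R^d$ is automatically a basis. The count is immediate: for each fixed $j\in\{1,\ldots,J\}$ the list $\balpha_j,\phi\balpha_j,\ldots,\phi^{m-1}\balpha_j$ contributes $m$ vectors, and there are $J$ values of $j$, so $|W| = Jm = d$. Thus the whole statement reduces to linear independence.

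First I would exploit the block structure of $\phi$. Since $\phi$ is block-diagonal with respect to the decomposition $\R^d = H_1\oplus\cdots\oplus H_J$ and $\balpha_j\in H_j$, every power $\phi^k\balpha_j$ again lies in $H_j$; indeed, identifying $H_j$ with $\R^m$ via the $j$-th block, we have $\phi^k\balpha_j = \psi^k\balpha_j$. Because the $H_j$ form a direct sum, any vanishing linear combination of the vectors of $W$ splits into $J$ separate vanishing combinations, one inside each $H_j$. Hence it suffices to show, for each fixed $j$, that $\{\balpha_j,\psi\balpha_j,\ldots,\psi^{m-1}\balpha_j\}$ is linearly independent in $H_j\cong\R^m$.

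This is precisely the content of Corollary~\ref{independent-phi-alphas}, applied with $\bx=\balpha_j$, once I verify its hypothesis that $\bz=\Fk(\balpha_j)$ has all $m$ coordinates nonzero. Under the identification $H_j\cong\R^m$ the vector $\balpha_j$ is the all-ones vector $(1,\ldots,1)$, so by the definition (\ref{def-Fk}) of $\Fk$ its image is
\[
\Fk(1,\ldots,1) = \Bigl(1,\ldots,1,\tfrac{1+i}{\sqrt2},\tfrac{1-i}{\sqrt2},\ldots,\tfrac{1+i}{\sqrt2},\tfrac{1-i}{\sqrt2}\Bigr),
\]
all of whose entries are nonzero. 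Corollary~\ref{independent-phi-alphas} then yields independence within each $H_j$, and assembling these over $j=1,\ldots,J$ gives the independence of $W$, completing the proof.

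There is no genuine obstacle here: the statement is a near-immediate consequence of Corollary~\ref{independent-phi-alphas} together with the block-diagonal form of $\phi$. The only point that truly requires checking is the nonzero-coordinate condition, and this is exactly where the specific (otherwise arbitrary) choice of $\balpha_j$ as the all-ones vector enters — any $\bx\in\R^m$ with $\Fk(\bx)$ nowhere vanishing would serve equally well, which is the sense in which the choice is harmless.
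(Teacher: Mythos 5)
Your proof is correct and follows the same route as the paper: reduce to each block $H_j$ via the block-diagonal structure, check that $\Fk(\balpha_j)$ has all nonzero coordinates (since $\balpha_j$ is the all-ones vector in $H_j\cong\R^m$), and invoke Corollary~\ref{independent-phi-alphas}. You merely spell out the counting $|W|=Jm=d$ and the splitting of a vanishing combination across the direct sum, which the paper leaves implicit.
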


\begin{proof}
Identifying $H_j$ with $\R^m$, we have $\phi_j =\phi|_{H_j}\approx
\psi$ and use the isomorphism $\Fk$ defined above.  In view of
(\ref{def-alpha}), all the components of
$\bz_j=\Fk(\balpha_j)$ are non-zero, so the claim follows from
Corollary~\ref{independent-phi-alphas}.
\end{proof}

For $\bx,\by\in \R^m$ we use the standard scalar product $\langle
\bx,\by \rangle = \sum_{k=1}^m x_k y_k$, and for $\bz,\bu \in
\R^{s} \oplus \C^{2t}$ the scalar product is given by
$$
\langle \bz,\bu \rangle_{_\C} = \sum_{k=1}^{s+2t} z_k \ov{u}_k.
$$
Observe that \be \label{eq-scalar} \langle \bx,\by \rangle =
\langle \Fk(\bx),\Fk(\by) \rangle_{_\C}\ \ \ \mbox{for all}\ \
\bx,\by\in \R^m. \ee Recall also that for any $m \times m$ matrix
$A$,
$$
\langle A\bx,\by \rangle = \langle \bx,A^T\by \rangle\ \ \ \mbox{for all}\ \ \bx,\by\in \R^m.
$$

\section{Proof of the main theorem (proof of Theorem\,\ref{th-main})} \label{Meyer-property}

\noindent Here we deduce Theorem\,\ref{th-main} from
Theorem\,\ref{isomorphicImageOfC-contained}. Recall that a set of
algebraic integers $\Theta = \{\theta_1, \cdots, \theta_r \}$ is a
{\em Pisot family} if for any $1 \le j \le r$, every Galois
conjugate $\gamma$ of $\theta_j$ with $|\gamma| \ge 1$ is
contained in $\Theta$. We denote by $\dist(x,\Z)$ the distance from a real number $x$ to the nearest integer.

\begin{prop} \label{set-of-eigenvalues}
Let $\Tk$ be a self-affine tiling of $\R^d$ with a diagonalizable
expansion map $\phi$. Suppose that all the eigenvalues of $\phi$
are algebraic conjugates with the same multiplicity. If
$Spec(\phi)$ is a Pisot family, then the set of eigenvalues of
$(X_{\Tk}, \R^d, \mu)$ is relatively dense.
\end{prop}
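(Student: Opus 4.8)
The plan is to derive the proposition from the structural Theorem~\ref{isomorphicImageOfC-contained} together with the eigenvalue criterion of \cite{soltil,sol-eigen}, the Pisot family hypothesis entering through a trace computation. Recall that this criterion characterizes eigenvalues by: $\balpha\in\R^d$ is an eigenvalue of $(X_\Tk,\R^d,\mu)$ if and only if $\dist(\langle\phi^n x,\balpha\rangle,\Z)\to 0$ as $n\to\infty$ for every $x\in\Xi$ (exponential decay suffices for any of its stronger, summable forms). Since eigenfunctions multiply and conjugate, the set of eigenvalues is a subgroup of $\R^d$, so it will be relatively dense the moment we exhibit $d$ linearly independent eigenvalues. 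My first step is to reduce the criterion to a finite check. By Theorem~\ref{isomorphicImageOfC-contained}, $\Ck\subset\rho(M)$ with $M=\Z[\phi]\balpha_1+\cdots+\Z[\phi]\balpha_J$, and since $\rho(M)$ is a group, $\Xi\subset\rho(M)$. Writing $\bbeta=\rho^T\balpha$ and using $\rho\phi=\phi\rho$, for $x=\rho(\bv)$ with $\bv\in M$ one has $\langle\phi^n x,\balpha\rangle=\langle\phi^n\bv,\bbeta\rangle$. As $\phi$ satisfies the monic integer polynomial $p$, the module $M$ is finitely generated over $\Z$ by vectors $\phi^k\balpha_j$; by subadditivity of $\dist(\cdot,\Z)$ and fixed integer coefficients, it will be enough to construct $\bbeta$ with
\[
\dist\bigl(\langle\phi^n\balpha_j,\bbeta\rangle,\ \Z\bigr)\longrightarrow 0 \qquad (1\le j\le J).
\]

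To build such $\bbeta$, I would decompose $\bbeta=\sum_j\bbeta_j$ with $\bbeta_j\in H_j$. Because the $H_j$ are orthogonal and $\phi$-invariant, $\langle\phi^n\balpha_j,\bbeta\rangle=\langle\psi^n\balpha_j,\bbeta_j\rangle$ on $H_j\cong\R^m$, and applying $\Fk$ together with \eqref{eq-mult2} and \eqref{eq-scalar} turns this into the exponential sum
\[
\langle\phi^n\balpha_j,\bbeta\rangle=\sum_{k=1}^m\lam_k^{\,n}\,(\bz_j)_k\,\overline{(\bu_j)_k},
\qquad \bz_j=\Fk(\balpha_j),\ \ \bu_j=\Fk(\bbeta_j),
\]
where every coordinate $(\bz_j)_k$ is non-zero by Corollary~\ref{independent-phi-alphas}. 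Let $K=\Q(\lam_1)$, with embeddings ordered so that $\sigma_k(\lam_1)=\lam_k$ for $1\le k\le m$, the remaining embeddings sending $\lam_1$ to its conjugates $\mu\notin\Spec(\phi)$. Given an algebraic integer $\xi^{(j)}\in\Z[\lam_1]$, I would define $\bu_j$ by $\overline{(\bu_j)_k}=\sigma_k(\xi^{(j)})/(\bz_j)_k$; the conjugate-pairing relations $(\bz_j)_{\bar k}=\overline{(\bz_j)_k}$ and $\sigma_{\bar k}(\xi)=\overline{\sigma_k(\xi)}$ make $\bu_j$ land in $\Fk(\R^m)$, so that $\bbeta_j=\Fk^{-1}(\bu_j)$ is a genuine real vector and $\langle\phi^n\balpha_j,\bbeta\rangle=\sum_{k=1}^m\lam_k^{\,n}\sigma_k(\xi^{(j)})$.

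The Pisot family hypothesis now produces the decay. The element $\lam_1^{\,n}\xi^{(j)}$ is an algebraic integer, so its trace is a rational integer, and
\[
\mathrm{Tr}_{K/\Q}\bigl(\lam_1^{\,n}\xi^{(j)}\bigr)=\sum_{k=1}^m\lam_k^{\,n}\sigma_k(\xi^{(j)})+\sum_{\mu}\mu^{\,n}\sigma_\mu(\xi^{(j)})\in\Z.
\]
Because $\Spec(\phi)=\{\lam_1,\dots,\lam_m\}$ is a Pisot family and the $\mu$ are exactly the conjugates of $\lam_1$ lying outside $\Spec(\phi)$, each satisfies $|\mu|<1$; hence the second sum tends to $0$ geometrically and $\dist(\langle\phi^n\balpha_j,\bbeta\rangle,\Z)\to 0$ exponentially. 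Thus every $\balpha=(\rho^T)^{-1}\bbeta$ obtained this way is an eigenvalue.

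It remains to reach full rank. Since the above condition decouples over the orthogonal blocks, I would fix $j$, take $\xi^{(j')}=0$ for $j'\neq j$, and let $\xi^{(j)}$ range over $1,\lam_1,\dots,\lam_1^{m-1}$. A Vandermonde argument --- a non-zero real polynomial of degree $<m$ cannot vanish at the $m$ distinct eigenvalues $\lam_k$ --- shows the resulting $\bbeta_j$ are $\R$-independent, hence a basis of $H_j$; ranging over $j$ yields $mJ=d$ independent eigenvalue vectors, whose images under $(\rho^T)^{-1}$ span $\R^d$, so the eigenvalue group contains a full lattice and is relatively dense. I expect the main obstacle to be the middle construction: one must simultaneously realize the exponential sum as the ``principal part'' of an honest integer trace and keep each $\bbeta_j$ real, which rigidly couples the complex-conjugate coordinate pairing of $\Fk$ to the Galois action on $\xi^{(j)}$; it is precisely the Pisot family condition that forces the complementary conjugates to contribute a vanishing remainder. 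Everything after that --- the reduction via Theorem~\ref{isomorphicImageOfC-contained} and the Vandermonde passage to full rank --- is routine.
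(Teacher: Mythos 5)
Your overall route is the same as the paper's: reduce to $\Xi\subset\rho(\Z[\phi]\balpha_1+\cdots+\Z[\phi]\balpha_J)$ via Theorem~\ref{isomorphicImageOfC-contained}, construct dual vectors $\bbeta_j\in H_j$ through $\Fk$ so that $\langle\phi^n\balpha_j,\bbeta_j\rangle$ becomes a partial trace $\sum_k\lam_k^{n}\sigma_k(\xi)$, invoke the Pisot family condition to kill the complementary conjugates, and finish with the Vandermonde/independence argument (your family $\xi^{(j)}=1,\lam_1,\dots,\lam_1^{m-1}$ is exactly the paper's $B_j=\{\bbeta_j,\dots,(\phi^T)^{m-1}\bbeta_j\}$). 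All of that is correct and matches the paper.

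There is, however, one genuine gap: you misquote the eigenvalue criterion. The criterion of \cite[Th.\,3.13]{sol-eigen}, as used in the paper, has \emph{two} conditions: the limit condition $\lim_n e^{2\pi i\langle\phi^n\bx,\gamma\rangle}=1$ for all $\bx\in\Xi$, \emph{and} the exact integrality condition $e^{2\pi i\langle\bx,\gamma\rangle}=1$ for all $\bx$ in the group $\mathcal{K}$ of translational periods of $\Tk$. Your argument only verifies the first. Since the proposition covers periodic self-affine tilings (where $\mathcal{K}$ is a nontrivial lattice), the vectors $(\rho^T)^{-1}\bbeta$ you produce need not satisfy $\langle\bx,(\rho^T)^{-1}\bbeta\rangle\in\Z$ on $\mathcal{K}$, so they are not yet certified as eigenvalues. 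The paper closes this by citing \cite[Cor.\,4.4]{sol-eigen} to upgrade the convergence to uniform convergence over $\Xi$, then using that $\mathcal{K}$ is a subgroup (so $k\bx\in\mathcal{K}$ for all $k\in\Z_+$) to force $\langle\phi^n\bx,(\rho^T)^{-1}(\phi^T)^l\bbeta_j\rangle\in\Z$ exactly for all $n\ge K_l$; one then takes the eigenvalues to be $(\rho^T)^{-1}(\phi^T)^{K+l}\bbeta_j$ with $K=\max_l K_l$ rather than $(\rho^T)^{-1}(\phi^T)^l\bbeta_j$ themselves. This extra boost by $\phi^K$ does not affect the full-rank conclusion, since $\phi^T$ is invertible, but it is a needed step that your write-up omits.
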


\begin{proof} Recall that $\Xi = \{\bx \in \R^d :\ \exists\ T \in \Tk,\ T+\bx
\in \Tk\}$ is the set of ``return vectors'' for the tiling $\Tk$,
and let $\mathcal{K} = \{\bx \in \R^d :\ \Tk - \bx = \Tk \}$ be
the set of translational periods. Clearly, $\Kk \subset \Xi
\subset \Ck-\Ck$.  We know from \cite[Th.\,3.13]{sol-eigen} that
$\gamma$ is an eigenvalue for $(X_{\Tk}, \R^d, \mu)$ if and only
if \[ \lim_{n \to \infty} e^{2 \pi i \langle \phi^n {\bf x},
\gamma \rangle} = 1 \ \ \ \mbox{for all ${\bf x} \in \Xi$ \;\;
and}\]
\[e^{2 \pi i \langle {\bf x},
\gamma \rangle} = 1 \ \ \ \mbox{for all ${\bf x} \in
\mathcal{K}$}.\]

Let $\balpha_{j}\in H_{j}$ be the vectors from (\ref{def-alpha}).
Consider them as vectors in $\R^m$, and let $\Fk$ be the linear
map $\R^m \to \R^s\oplus \C^{2t}$ given by (\ref{def-Fk}). Recall
that $\phi_j=\phi|_{H_j}$ has $s$ real and $2t$ complex
eigenvalues, and $m = s+2t$. Define $\bbeta_{j}\in H_{j}\approx
\R^m$ so that
$$
(\Fk(\bbeta_{j}))_k = (\ov{\Fk(\balpha_{j})})_k^{-1}\ \ \mbox{for}\ 1 \le k \le m.
$$
More explicitly,
$$
(\bbeta_j)_k = (\balpha_j)_k^{-1}\ \ \mbox{for}\ 1 \le k \le s
$$
and $$ (\bbeta_j)_{s+2k-1} \pm i(\bbeta_j)_{s+2k} =
\frac{2}{(\balpha_j)_{s+2k-1} \mp i(\balpha_j)_{s+2k}}\ \
\mbox{for}\ 1 \le k \le t.
$$
\smallskip
\noindent
Note that $\bbeta_{j}\in H_{j}$ are well-defined,  and
$\Fk(\bbeta_{j})$ have all non-zero coordinates  in $H_{j}$. Thus,
\be \label{prebasis} B_{j} := \{\bbeta_{j}, \ldots,
(\phi^T)^{m-1}\bbeta_{j}\} \ee is a basis of $H_{j}$ by
Corollary~\ref{independent-phi-alphas} (note that $H_j$ is also
$\phi^T$-invariant and $\phi^T|_{H_j}$ is isomorphic to $\psi^T$).
It follows that $B:=\bigcup_{j=1}^J B_j$ is a basis of $\R^d$. We
will show that all elements of the set $(\rho^T)^{-1}(\phi^T)^K B$ are
eigenvalues for the tiling dynamical system, for $K$ sufficiently
large.

By the definition of $\bbeta_{j}$, in view of (\ref{eq-scalar})
and (\ref{eq-mult2}), for any $n \in \Z_{\ge 0}$ and $0 \le l <
m$,
\begin{eqnarray}
\langle \phi^n \balpha_{j}, (\phi^T)^l \bbeta_{j} \rangle & = & \langle \phi^{n+l}\balpha_j, \bbeta_j\rangle \nonumber \\
                 & = & \langle \Fk(\phi^{n+l}\balpha_j), \Fk(\bbeta_i)\rangle_\C \nonumber \\
                 & = & \langle D^{n+l} \Fk(\balpha_i), \Fk(\bbeta_i)\rangle_{_\C} \nonumber \\
                 & = & \sum_{k=1}^s \lam_k^{n+l} + \sum_{k=s+1}^{s+t} (\lam_k^{n+l}+\ov{\lam}_k^{n+l}). \label{eq-PV}
\end{eqnarray}
Here $D$ is the diagonal matrix from (\ref{def-diag}).
Since
$Spec(\phi)$ is a Pisot family, it follows
that $\dist(\langle \phi^n \balpha_{j},
(\phi^T)^l \bbeta_{j} \rangle,\Z)\to 0$, as $n\to \infty$. (This is a standard argument: the sum of $(n+l)$-th powers of all
zeros of a polynomial in $\Z[x]$ is an integer, hence the distance from the sum in (\ref{eq-PV}) to $\Z$ is
bounded by the sum of the moduli of $(n+l)$-th powers of their remaining conjugates, which are all less than one in modulus.
Thus, this distance tends to zero exponentially fast.)
Observe also that $\langle
\phi^n \balpha_{u}, \bbeta_{j}\rangle = 0$ if $u\ne j$, hence
\[ \lim_{n \to \infty} e^{2 \pi i \langle \phi^n \by, (\phi^T)^l {\sbbeta}_{j} \rangle} = 1
\ \ \ \mbox{for all} \ \by \in \Z[\phi]\balpha_{1} + \cdots +
\Z[\phi]\balpha_{J} \,. \] Therefore, by
Theorem~\ref{isomorphicImageOfC-contained}, using that $\Xi
\subset \Ck - \Ck$, we obtain \be
\label{first-eigenvalue-condition} \lim_{n \to \infty} e^{2 \pi i
\langle \phi^n \bx, (\rho^T)^{-1}(\phi^T)^l {\sbbeta}_{j} \rangle}
= 1 \ \ \ \mbox{for all} \ \bx \in \Xi \,. \ee Furthermore, by
\cite[Cor.\,4.4]{sol-eigen}, the convergence is uniform in $\bx
\in \Xi$, that is,
\[\lim_{n \to \infty} \sup_{\bx \in \Xi} |e^{2\pi i \langle \phi^n \bx, (\rho^T)^{-1}(\phi^T)^l {\sbbeta}_{j} \rangle} - 1| = 0 \,.\]
Recall that $\mathcal{K} \subset \Xi$, and $\mathcal{K}$ is a
discrete subgroup in $\R^d$. So for every $\bx \in \Kk$, $$
\lim_{n \to \infty} \sup_{\stackrel{k \in \Z_+}{\bx \in
\mathcal{K}}} |e^{2\pi i \langle \phi^n  (k \bx),
(\rho^T)^{-1}(\phi^T)^l {\sbbeta}_{j} \rangle} - 1| = \lim_{n \to
\infty} \sup_{\stackrel{k \in \Z_+}{\bx \in \mathcal{K}}} |e^{2\pi
i k \langle \phi^n \bx, (\rho^T)^{-1}(\phi^T)^l {\sbbeta}_{j}
\rangle} - 1| = 0. $$ It follows that there exists $K_{l} \in
\Z_+$ such that for any $n \ge K_{l}$, for all $\bx\in \Kk$, \be
\label{supremumOfexp-eigenvalue} \sup_{\stackrel{k \in \Z_+}{\bx
\in \mathcal{K}}} |e^{2\pi i k \langle \phi^n \bx,
(\rho^T)^{-1}(\phi^T)^l {\sbbeta}_{j} \rangle} - 1| < 1/2 \,. \ee
However, unless $\langle \phi^n \bx, (\rho^T)^{-1}(\phi^T)^l
\bbeta_{j} \rangle \in \Z$ for all $\bx \in \mathcal{K}$,
(\ref{supremumOfexp-eigenvalue}) does not hold. Thus
\[e^{2 \pi i \langle \phi^n \bx, (\rho^T)^{-1}(\phi^T)^l \sbbeta_{j} \rangle} = e^{2 \pi i \langle \bx, (\rho^T)^{-1}(\phi^T)^{n+l} \sbbeta_{j} \rangle} = 1 \ \
\mbox{for all $\bx \in \mathcal{K}$ and all $n \ge K_{l}$}\,.\]
Let $ K = {\mmax} \{K_{l}:\ 0 \le l < m \}. $ Then \be
\label{second-eigenvalue-condition} e^{2 \pi i \langle \bx,
(\rho^T)^{-1}(\phi^T)^{K+l} \sbbeta_{j} \rangle} = 1 \ \ \mbox{for
all $\bx \in \mathcal{K}$}\,. \ee So from
(\ref{first-eigenvalue-condition}) and
(\ref{second-eigenvalue-condition}) it follows that
$(\rho^T)^{-1}(\phi^T)^{K+l} \bbeta_{j}$ is an eigenvalue of
$(X_{\Tk}, \R^d, \mu)$ for $l = 0,\ldots,m-1$. We have shown that
all vectors of the set $(\rho^T)^{-1}(\phi^T)^K B$, where $B =
\bigcup_j B_j$ and $B_j$ are given by (\ref{prebasis}), are
eigenvalues of $(X_{\Tk}, \R^d, \mu)$. We know $\phi^T$ is
invertible (it is expanding), $\rho$ is a linear isomorphism, and
$B$ is a basis of $\R^d$, hence we obtain a basis of $\R^d$
consisting of eigenvalues. Integer linear combinations of
eigenvalues are eigenvalues as well, so the set of eigenvalues of
$(X_{\Tk}, \R^d, \mu)$ is relatively dense in $\R^d$.
\end{proof}

The next lemma is essentially due to Robinson \cite{Robi} in a
more general case; we provide a proof for completeness.

\begin{lemma}  \label{eigenvalueImplyPisotFamily}
If $\bgamma$ is a non-zero eigenvalue of $(X_{\Tk}, \R^d, \mu)$,
then $Spec(\phi)$ is a Pisot family.
\end{lemma}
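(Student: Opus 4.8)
The plan is to derive the Pisot family property directly from the eigenvalue criterion of \cite[Th.\,3.13]{sol-eigen} together with a Pisot-type argument for integer linear recurrences; notably, this direction does not need the rigidity result Theorem~\ref{isomorphicImageOfC-contained}, which is reserved for the converse. Since $\bgamma$ is an eigenvalue, the first half of that criterion gives $\dist(\langle \phi^n\bx,\bgamma\rangle,\Z)\to 0$ for every $\bx\in\Xi$. Because $\phi$ is diagonalizable over $\C$ with distinct eigenvalues $\mu_1,\ldots,\mu_m$ (the elements of $Spec(\phi)$, each of multiplicity $J$), for each fixed $\bx$ the real sequence $a_n:=\langle \phi^n\bx,\bgamma\rangle$ has the form $a_n=\sum_{i=1}^m c_i\mu_i^n$ with $c_i\in\C$; the multiplicities only affect the coefficients, since diagonalizability rules out any polynomial-in-$n$ factors.

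First I would set up the recurrence. Let $p(t)\in\Z[t]$ be the monic irreducible minimal polynomial, with roots $\mu_1,\ldots,\mu_D$ (so $D=\deg p\ge m$), the first $m$ of which are the eigenvalues of $\phi$. Since each $\mu_i$ ($i\le m$) is a root of $p$, the sequence $a_n$ satisfies the integer linear recurrence with characteristic polynomial $p$. Writing $a_n=N_n+\e_n$ with $N_n\in\Z$ the nearest integer and $\e_n\to 0$ and substituting into the recurrence, the integer-valued left-hand side equals a quantity tending to $0$; an integer sequence tending to $0$ is eventually $0$, so $\{N_n\}$ satisfies the same recurrence for large $n$. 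Hence $N_n=\sum_{i=1}^D e_i\mu_i^n$ for $n$ large, where the $e_i$ lie in the splitting field $L$ of $p$ (they solve a Vandermonde system with integer right-hand side).

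Next I would extract two structural facts. From $\e_n=a_n-N_n=\sum_{i=1}^D(c_i-e_i)\mu_i^n\to 0$ (with $c_i:=0$ for $i>m$), a standard lemma — a finite exponential sum $\sum f_i\mu_i^n$ with distinct $\mu_i$ tending to $0$ must have $f_i=0$ whenever $|\mu_i|\ge 1$, since otherwise the top-modulus terms form a nontrivial trigonometric polynomial tending to $0$, impossible because the Cesàro mean of its squared modulus is $\sum|f_i|^2>0$ — yields $e_i=c_i$ for all $|\mu_i|\ge1$; in particular $e_i=c_i$ for $i\le m$ (all eigenvalues of the expanding $\phi$ satisfy $|\mu_i|>1$) and $e_i=0$ for every extra root $i>m$ with $|\mu_i|\ge1$. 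On the other hand, applying any $g\in\mathrm{Gal}(L/\Q)$ to $N_n\in\Z$ and comparing coefficients via linear independence of the sequences $(\mu_i^n)_n$ shows $g(e_i)=e_{g(i)}$; since $p$ is irreducible the Galois action is transitive on its roots, so either all $e_i=0$ or all $e_i\ne0$.

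Finally I would combine these with a nondegeneracy choice of $\bx$. Because $\Xi$ is relatively dense it spans $\R^d$, so $\bgamma\ne0$ forces some $\bx\in\Xi$ with $a_n\not\equiv0$; for this $\bx$ some $c_i\ne0$ with $i\le m$, hence $e_i=c_i\ne0$, and by the dichotomy all $e_i\ne0$. In particular $e_i\ne0$ for every extra root $i>m$, which is compatible with the vanishing fact $e_i=0\ (|\mu_i|\ge1)$ only if $|\mu_i|<1$ for all $i>m$ — that is, every Galois conjugate of an eigenvalue of $\phi$ that is not itself an eigenvalue has modulus $<1$, which is exactly the statement that $Spec(\phi)$ is a Pisot family. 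The main obstacle is the middle step: promoting the approximate relation $a_n\to\Z$ to the exact structural identity $N_n=\sum_{i=1}^D e_i\mu_i^n$ with the Galois symmetry $g(e_i)=e_{g(i)}$, since it is this symmetry, rather than the mere smallness of $\e_n$, that pins down the remaining conjugates.
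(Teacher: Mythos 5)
Your proof is correct, and it reaches the conclusion by a genuinely different route at the decisive step. The paper's own proof actually does pass through Theorem~\ref{isomorphicImageOfC-contained}: it writes $\bx=\rho(\sum_{j}p_j(\phi)\balpha_j)$ and uses the map $\Fk$ to display $\langle\phi^n\bx,\bgamma\rangle$ as an exponential sum in the eigenvalues of $\phi$; you obtain the same form directly from the spectral decomposition of the diagonalizable $\phi$, which is legitimate and confirms your remark that the rigidity theorem is dispensable for this direction (consistent with the paper's comment in the introduction that necessity follows rather easily from the eigenvalue criterion). More substantively, the paper then concludes by citing Vijayaraghavan's theorem \cite[Th.\,4]{Vij} as a black box, whereas you prove the needed implication from scratch: the nearest integers $N_n$ eventually satisfy the integer recurrence with characteristic polynomial $p$, hence $N_n=\sum_{i=1}^D e_i\mu_i^n$ for large $n$; the Ces\`aro argument (orthogonality of the distinct top-modulus characters) forces $e_i=c_i$ whenever $|\mu_i|\ge 1$; and Galois transitivity on the roots of the irreducible $p$ gives the all-or-nothing dichotomy on the $e_i$, which, combined with a choice of $\bx\in\Xi$ making $\langle\bx,\bgamma\rangle\ne 0$, yields the Pisot family property. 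All of these steps are sound: an integer sequence tending to $0$ is eventually $0$, and the shifted power sequences $(\mu_i^n)_{n\ge n_0}$ remain linearly independent because the roots of the irreducible $p$ are distinct and nonzero. What your route buys is self-containedness --- it is in effect an inline proof of the special case of Vijayaraghavan's theorem in which the exponents are already known to be algebraic integers sharing an irreducible minimal polynomial, exactly the situation guaranteed here by Theorem~\ref{th-KS} and the standing equal-multiplicity assumption; what the paper's citation buys is brevity.
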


\begin{proof}
Let $\bx \in \Xi$. By Theorem~\ref{isomorphicImageOfC-contained}
we have $\bx = \rho(\sum_{j=1}^J p_j(\phi)\balpha_j)$ for some
polynomials $p_j \in \Z[x]$. Let $(\rho^T \bgamma)_j = p_j(\rho^T
\bgamma)$. We again use the linear injective map $\Fk:\,H_j\approx
\R^m \to \R^s \oplus \C^{2t}$ defined by (\ref{def-Fk}) and
obtain, using (\ref{eq-scalar}) and (\ref{eq-mult2}),
\begin{eqnarray}
\langle \phi^n\bx,\bgamma \rangle  & = & \sum_{j=1}^J \langle \phi^n p_{j}(\phi) \balpha_{j}, (\rho^T \bgamma)_{j} \rangle \nonumber \\
                                   & = & \sum_{j=1}^J \langle \Fk(\phi^n p_{j}(\phi)\balpha_{j}), \Fk((\rho^T\bgamma)_{j}) \rangle \nonumber \\
                           & = & \sum_{j=1}^J \left(\sum_{k=1}^s \lam_k^n p_j(\lam_k) z_{jk}\zeta_{jk} + \sum_{k=s+1}^{s+t} 2\re[\lam_k^n p_j(\lam_k) z_{j,2k-s-1}\zeta_{j,2k-s-1}]\right)\nonumber \\
                           & = & \sum_{k=1}^s c_k \lam_k^n + \sum_{k=s+1}^{s+t} (c_k \lam_k^n + \ov{c}_k \ov{\lam}_k^n), \nonumber
\end{eqnarray}
where $(z_{jk})_{k=1}^{s+2t} = \Fk(\balpha_j)$,
$(\zeta_{jk})_{k=1}^{s+2t} = \Fk((\rho^T \bgamma)_j)$, and $c_k$
are some complex numbers. By the assumption that $\bgamma$ is
an eigenvalue and \cite[Th.\,4.3]{soltil} we have \be
\label{dunduk} \dist\left(\sum_{k=1}^s c_k \lam_k^n + \sum_{k=s+1}^{s+t}
(c_k \lam_k^n + \ov{c}_k \ov{\lam}_k^n),\ \Z\right) = \dist( \langle \phi^n \bx,
\bgamma \rangle ,\Z) \stackrel{n \to \infty}{\longrightarrow} 0. \ee
Since $\Xi$ is relatively dense in $\R^d$ and $\bgamma\ne \b0$, we
can easily make sure that $\langle \bx,\bgamma \rangle\ne 0$, and
hence not all coefficients $c_k$ in (\ref{dunduk}) are equal to
zero. Then we can apply a theorem of Vijayaraghavan \cite[Th.\,4]{Vij} (a generalization of the classical result of Pisot and Vijayaraghavan)
to conclude that
$\Spec(\phi)$ is a Pisot family. (More precisely, we obtain that a
subset of $\Spec(\phi)$ is a Pisot family, but since all elements
of $\Spec(\phi)$ are conjugates and have modulus greater than one,
we get the claim.)
\end{proof}

\begin{theorem} \label{PisotFamily-relDenseEigenvalues}
Let $\Tk$ be a self-affine tiling of $\R^d$ with a diagonalizable
expansion map $\phi$. Suppose that all the eigenvalues of $\phi$
are algebraic conjugates with the same multiplicity. Then the
following are equivalent:
\begin{itemize}
\item[(i)] $Spec(\phi)$ forms a Pisot family;
\item[(ii)] the set of eigenvalues of $(X_{\Tk}, \R^d, \mu)$
    is relatively dense;
\item[(iii)] $(X_{\Tk}, \R^d, \mu)$ is not weakly mixing;
\item[(iv)] $\mathcal{C} = \{c(T) : T \in \Tk \}$ is a Meyer
    set.
\end{itemize}
\end{theorem}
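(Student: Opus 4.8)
The plan is to prove the cyclic chain of implications (i) $\Rightarrow$ (ii) $\Rightarrow$ (iii) $\Rightarrow$ (i), which establishes the equivalence of the first three conditions, and then to attach condition (iv) by means of the characterization of the Meyer property from \cite{LS}. The point of this arrangement is that two of the three implications in the cycle are already available in the preceding results. Indeed, (i) $\Rightarrow$ (ii) is exactly Proposition~\ref{set-of-eigenvalues}, while Lemma~\ref{eigenvalueImplyPisotFamily} asserts that the existence of a non-zero eigenvalue forces $Spec(\phi)$ to be a Pisot family. Since, by definition, the system failing to be weakly mixing means precisely that there is a non-zero eigenvalue $\bgamma$, the latter lemma delivers (iii) $\Rightarrow$ (i) at once.

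The only remaining implication, (ii) $\Rightarrow$ (iii), requires no new machinery and I would dispose of it directly: a relatively dense subset of $\R^d$ cannot consist of the single point $\b0$, so the set of eigenvalues of $(X_{\Tk},\R^d,\mu)$ contains some non-zero vector, which is exactly the statement that the system is not weakly mixing. This closes the three-term cycle and yields the equivalence (i) $\Leftrightarrow$ (ii) $\Leftrightarrow$ (iii).

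To bring in (iv), I would invoke \cite[Th.\,4.14]{LS}. The control point set $\mathcal{C}$ is a primitive substitution Delone set associated with a self-affine tiling, which is exactly the setting of that theorem; it states that $\mathcal{C}$ is a Meyer set if and only if the set of eigenvalues of the tiling dynamical system is relatively dense in $\R^d$. This is precisely the equivalence (ii) $\Leftrightarrow$ (iv), and together with the cycle established above it completes the equivalence of all four conditions.

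I do not anticipate a genuine obstacle here, since the substantive content has already been isolated: the Pisot family condition delivers relative density through the explicit construction of a basis of eigenvalues in Proposition~\ref{set-of-eigenvalues}, the converse direction is handled by the Vijayaraghavan-type argument inside Lemma~\ref{eigenvalueImplyPisotFamily}, and the geometric equivalence with the Meyer property is supplied by \cite[Th.\,4.14]{LS}. The present proof is therefore essentially organizational. The only minor points deserving a moment's attention are verifying that the hypotheses of \cite[Th.\,4.14]{LS} are met by $\mathcal{C}$ (they are, by construction), and confirming in the step (ii) $\Rightarrow$ (iii) that relative density genuinely supplies a non-trivial, rather than merely the trivial, eigenvalue.
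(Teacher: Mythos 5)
Your proof is correct and follows exactly the same route as the paper: (i) $\Rightarrow$ (ii) via Proposition~\ref{set-of-eigenvalues}, the immediate observation for (ii) $\Rightarrow$ (iii), Lemma~\ref{eigenvalueImplyPisotFamily} for (iii) $\Rightarrow$ (i), and \cite[Th.\,4.14]{LS} for (ii) $\Leftrightarrow$ (iv). Nothing to add.
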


\begin{proof}
(i) $\Rightarrow$ (ii) by Prop.\,\ref{set-of-eigenvalues}. \\
(ii) $\Rightarrow$ (iii) is obvious.\\
(iii)  $\Rightarrow$ (i) by
Lemma~\ref{eigenvalueImplyPisotFamily}.\\
(ii) $\Leftrightarrow$ (iv) by \cite[Th.\,4.14]{LS}.
\end{proof}

Theorem\,\ref{th-main} is contained in
Theorem~\ref{PisotFamily-relDenseEigenvalues}, so it is proved as
well.

\section{Structure of the control point set (proof of Theorem\,\ref{isomorphicImageOfC-contained})}
\label{Structure of control point set}

\noindent Now we make an isomorphic transformation $\tau$ of the
tiling $\Tk$ into another tiling whose control point set contains
$\balpha_{1}, \ldots, \balpha_{J}$ such that $\tau$ commutes
with $\phi$. This gives the structure of the control point set of $\Tk$
that we use in proving the main theorem in Section
\ref{Meyer-property}.

\medskip

In Corollary~\ref{basisWithAlphas} we showed that $W=\{\balpha_{1}, \ldots, \phi^{m-1} \balpha_{1},
\ldots, \balpha_{J}, \ldots, \phi^{m-1}\balpha_{J}\}$ is a basis for $\R^d$.
Since $\mathcal{C}$ is relatively dense in $\R^d$, for any
$\epsilon > 0$, for
every $j=1,\ldots,J$, there exists $\by_j\in \Ck$ such that
\[ \left\| \frac{\by_j}{\|\by_j\|} - \frac{\balpha_j}{\|\balpha_j\|} \right\| < \eps. \]
For $\eps>0$ sufficiently small, $(j-1)m +1, \dots, (jm)$-th entries of $\by_{j}$ are
all non-zero for any $1 \le j \le J$, and the set
 \[ Y := \{\by_1, \dots,
\phi^{m-1}\by_1, \dots, \by_J, \dots, \phi^{m-1}\by_J\} \subset
\mathcal{C}\] is a basis of $\R^d$.
We fix such a basis $Y$.

\begin{lemma} \label{isomorphism-changing-basis}
Let $\tau : \R^d \to \R^d$ be a linear map such that for each $1
\le j \le J$ and $0 \le k < m$,
\[\tau(\phi^k \by_j) = \phi^k \balpha_j \,.\]
Then $\tau$ is an isomorphism of $\R^d$ such that $\tau \phi =
\phi \tau$.
\end{lemma}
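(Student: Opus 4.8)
The plan is to use that $\tau$ is prescribed on the basis $Y$ and maps it onto the basis $W$ of Corollary~\ref{basisWithAlphas}, and then to verify the commutation relation $\tau\phi=\phi\tau$ on $Y$, the only delicate point being the top power $\phi^{m-1}\by_j$, which will be resolved by the minimal-polynomial relation for $\phi$.

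First I would observe that $\tau$ is well defined and invertible. Since $Y=\{\phi^k\by_j:\ 1\le j\le J,\ 0\le k<m\}$ is a basis of $\R^d$ (as established just before the lemma), the prescription $\tau(\phi^k\by_j)=\phi^k\balpha_j$ determines a unique linear map $\tau:\R^d\to\R^d$. Its image is the set $W=\{\phi^k\balpha_j\}$, which is again a basis of $\R^d$ by Corollary~\ref{basisWithAlphas}; hence $\tau$ carries a basis to a basis and is therefore a linear isomorphism.

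Next I would establish $\tau\phi=\phi\tau$. Since both maps are linear and $Y$ is a basis, it suffices to check the identity on each $\phi^k\by_j$. The key algebraic input is that $\phi$, being diagonalizable with exactly $m$ distinct eigenvalues (the $s$ real eigenvalues and $t$ conjugate pairs of the single block $\psi$, each occurring with multiplicity $J$ in $\phi$), has minimal polynomial of degree $m$. Thus there are real constants $c_0,\ldots,c_{m-1}$ with the operator identity
\[ \phi^m=c_{m-1}\phi^{m-1}+\cdots+c_1\phi+c_0\,\Id. \]
For $0\le k\le m-2$ the verification is immediate, because $\phi^{k+1}\by_j$ is still one of the basis vectors on which $\tau$ is prescribed: $\tau(\phi(\phi^k\by_j))=\tau(\phi^{k+1}\by_j)=\phi^{k+1}\balpha_j=\phi(\tau(\phi^k\by_j))$. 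The only case requiring the identity above is $k=m-1$, where $\phi(\phi^{m-1}\by_j)=\phi^m\by_j$ is no longer among the prescribed vectors. Here I would write $\phi^m\by_j=\sum_{k=0}^{m-1}c_k\phi^k\by_j$ and apply $\tau$ linearly to get $\tau(\phi^m\by_j)=\sum_{k=0}^{m-1}c_k\phi^k\balpha_j=\phi^m\balpha_j$, the last equality being the same operator identity applied to $\balpha_j$. Since $\phi\,\tau(\phi^{m-1}\by_j)=\phi^m\balpha_j$ as well, the two sides agree; hence $\tau\phi=\phi\tau$ on all of $Y$, and therefore on $\R^d$.

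The point that needs care, and which I expect to be the crux, is exactly this top-power case: the argument works because the single operator identity $\phi^m=\sum_{k<m}c_k\phi^k$ governs both families $\{\phi^k\by_j\}_k$ and $\{\phi^k\balpha_j\}_k$ with the \emph{same} coefficients, so $\tau$ automatically intertwines the ``fold-back'' of $\phi^m$ on the two bases. This is also the structural reason the bases $Y$ and $W$ are built from precisely the powers $0,\ldots,m-1$.
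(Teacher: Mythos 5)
Your proof is correct and follows essentially the same route as the paper: invertibility from the fact that $\tau$ maps the basis $Y$ onto the basis $W$, and the commutation checked on $Y$ with the only nontrivial case $k=m-1$ handled by a degree-$m$ polynomial identity for $\phi$ (the paper invokes Cayley--Hamilton for the characteristic polynomial of $\psi$, which coincides with the minimal polynomial you use, since the $m$ eigenvalues of $\psi$ are distinct).
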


\begin{proof}
We first notice that $\tau$ is an isomorphism of $\R^d$, since $Y$
and $W$ are bases of $\R^d$.
In order to show that $\phi\tau(\bx) = \tau\phi(\bx),\ \bx\in \R^d$, it is enough to check this on the basis $Y$. For the vectors $\phi^k \by_j$,\ $0 \le k < m-1$, this
holds by definition, so we only need to consider $\phi^{m-1}\by_j$.
Let $p(t)$ be the characteristic polynomial of $\psi$. Then $p(\psi)=0$ by Cayley-Hamilton, and also $p(\phi)=0$, since $\phi$ is a
direct sum of $J$ copies of $\psi$. Thus
\begin{equation} \label{CH}
\phi^m =a_0I+\cdots + a_{m-1}\phi^{m-1}
\end{equation}
for some $a_0,\ldots,a_{m-1} \in \R$, hence
\begin{eqnarray*}
\phi\tau(\phi^{m-1}\by_j) = \phi^m \balpha_j & = & a_0 \balpha_j + \cdots + a_{m-1}\phi^{m-1}\balpha_j \\
                                             & = & a_0 \tau(\by_j)+\cdots + a_{m-1}\tau(\phi^{m-1}\by_j) = \tau(\phi^m \by_j),
\end{eqnarray*}
as desired.
\end{proof}

Let \be \label{new-substitution-tiling} \tau(\Tk):= \{\tau(T):\,
\tau(T)=(\tau(A), i), \ \mbox{where $T \in \Tk$ and $T=(A, i)$}
\}.\ee Note that $\tau(\Tk)$ is a fixed point of a primitive
substitution with the expansion map $\phi$. Indeed, $\om'(\tau(\Tk))=\tau(\Tk)$ where $\om'$ is defined by
\[ \om'(\tau(T_j)) = \{u + \tau(T_i): u \in \tau(\mathcal{D}_{ij}), i = 1, \dots, \kappa \} \ \ \
\mbox{for $j \le \kappa$,}\]
where
\[\phi\tau(A_j) = \bigcup_{i=1}^{\kappa} (\tau(\mathcal{D}_{ij}) + \tau(A_i)).\]
We define the tile map $\gamma': \tau(\Tk) \to \tau(\Tk)$ so that
for each $\Tk$-tile $T$,
\[\gamma'(\tau(T)) = \tau(\gamma(T)).\]
We define the control point for a tile $\tau(T) \in \tau(\Tk)$ by
\[\{c(\tau(T))\} = \bigcap_{n=0}^{\infty} \phi^{-n}((\gamma')^n \tau(\Tk)).\]
Then \[c(\tau(T)) = \tau c(T).\]

Applying the isomorphism $\tau$ commuting with $\phi$, we can reduce our problem to the case when
the control point set of the tiling contains $\balpha_{1}, \ldots, \balpha_{J}$. Thus, in
the rest of this section (except the last paragraph which proves
Theorem\,\ref{isomorphicImageOfC-contained}), we assume that
$\mathcal{C}$ contains $\balpha_{1}, \ldots, \balpha_{J}$.

\medskip

The following two propositions were obtained in
\cite{KS} in a special case. They are needed to get the structure of control
point set which we use in Section \ref{Meyer-property}. In the
appendix, we provide the proof, which is similar to that in \cite{KS}, for completeness.

In the next two propositions we {\bf do not} assume that all the eigenvalues of $\phi$ are conjugates and have the same multiplicity.
Let $G_\lam$ be the real $\phi$-invariant subspace of $\R^d$ corresponding to an eigenvalue $\lam \in \Spec(\phi)$.

\begin{prop} \label{prop-KenSol}
Let $\Ck$ be a set of control points for a self-affine tiling
$\Tk$ of $\R^d$ with an expansion map $\phi:\,\R^d \to \R^d$ which
is diagonalizable over $\C$. Let $\mathcal{C}_{\infty} =
\bigcup_{k=0}^{\infty} \phi^{-k} \mathcal{C}$ and let $\mathcal{D}$ be
a finitely generated $\Q[\phi]$-module containing
$\mathcal{C}_{\infty}$. Let $H$ be a vector space over $\R$ and
$A:\, H \to H$ be an expanding linear map, diagonalizable over $\C$.
Let $g: \Dk \to H$ be such that $g = A^{-1} \circ g \circ \phi$
and
\[g(y_1) - g(y_2) = g(y_1 - y_2) \ \ \ \mbox{for any $y_1, y_2 \in \Dk$}.\]
Let $f:= g|_{\Ck_\infty}:\,\Ck_\infty \to H$ be such that
for any $y_1, y_2 \in \mathcal{C}$,
\be \label{Lipschitz-linear-on-controlPointSet}
||f(y_1) - f(y_2)|| \le C ||y_1 - y_2 || \ \ \ \mbox{for some $C > 0$.}
\ee
Moreover, suppose that there exist $\gam>1$ and a norm
$\|\cdot\|$ in $H$ such that
\begin{equation} \label{eq-newk1}
\|A y\| \ge \gam\|y\|\ \ \mbox{for any}\ y\in H.
\end{equation}
Then the following hold:

{\bf (i)} The map $f$ is uniformly continuous on $\Ck_{\infty}$,
and hence extends by continuity to a map $f:\, \R^d\to H$
satisfying $f\circ \phi = A\circ f$.



{\bf (ii)} For any $\lam\in Spec(\phi)$ such that $|\lam| =
\gamma$, and any $a\in \R^d$,
\[ f|_{a + G_{\lam}} \ \mbox{is affine linear}. \]
\qed
\end{prop}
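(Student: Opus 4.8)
The plan is to exploit three structural facts at once: that $g$ is an additive intertwiner ($g\circ\phi=A\circ g$ and $g(y_1-y_2)=g(y_1)-g(y_2)$), that the control points form a hierarchy $\Ck\subseteq\phi^{-1}\Ck\subseteq\cdots$ whose union $\Ck_\infty=\bigcup_{k=0}^{\infty}\phi^{-k}\Ck$ is dense in $\R^d$, and that finite local complexity makes $(\Ck-\Ck)\cap\overline{B}_R$ finite for each $R$ while uniform discreteness of $\Ck$ gives a gap $\delta_0>0$ with $(\Ck-\Ck)\cap B_{\delta_0}=\{\bo\}$. First I would record that $\phi(\Ck_\infty)\subseteq\Ck_\infty$ and $\phi^{-1}\Ck_\infty=\Ck_\infty$, so that $\Ck_\infty-\Ck_\infty=\bigcup_{k\ge0}\phi^{-k}(\Ck-\Ck)$, and that by additivity $f(y_1)-f(y_2)=g(y_1-y_2)$ depends only on the difference. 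Thus part (i) reduces to showing $\|g(\bv)\|\to0$ as $\|\bv\|\to0$ along $\bv\in\Ck_\infty-\Ck_\infty$.

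For (i) the engine is the self-affine estimate: if $\phi^k\bv\in\Ck-\Ck$ then $g(\bv)=A^{-k}g(\phi^k\bv)$, so by $\|A^{-k}\bw\|\le\gam^{-k}\|\bw\|$ and the Lipschitz hypothesis $\|g(\phi^k\bv)\|\le C\|\phi^k\bv\|$ we get $\|g(\bv)\|\le C\gam^{-k}\|\phi^k\bv\|$. I would choose $k$ to be the first level at which $\|\phi^k\bv\|$ reaches the discreteness scale $\delta_0$; since one step expands the norm by at most $\varrho:=\max_{\lam\in\Spec(\phi)}|\lam|$, we have $\|\phi^k\bv\|\in[\delta_0,\varrho\delta_0)$, so $\phi^k\bv$ lands in the finite set $(\Ck-\Ck)\cap\overline{B}_{\varrho\delta_0}$, on which $\|g\|$ is bounded by some $M$. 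As $\delta_0\le\|\phi^k\bv\|\le\varrho^k\|\bv\|$ forces $k\ge\log(\delta_0/\|\bv\|)/\log\varrho$, this yields $\|g(\bv)\|\le M\gam^{-k}\lesssim\|\bv\|^{\log\gam/\log\varrho}$, a positive Hölder exponent. Uniform continuity on $\Ck_\infty$ follows, $f$ extends continuously to $\R^d$, and the identity $f\circ\phi=A\circ f$ passes to the closure.

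The hard part is precisely the legitimacy of that choice of level: a priori the first level at which $\|\phi^k\bv\|\approx\delta_0$ need not be a level at which $\phi^k\bv$ actually lies in $\Ck-\Ck$, since a small $\bv$ may only materialize as a genuine control-point difference at a much coarser scale. Ruling this out amounts to showing that the reduced (minimal-level) lift of every element of $\Ck_\infty-\Ck_\infty$ occurs at bounded geometric scale, equivalently that the control-point differences across the whole substitution hierarchy are generated, after rescaling by $\phi$, from a fixed finite set. This is the finite-local-complexity-across-scales input and the technical heart of the argument; I would establish it following the method of \cite{KS}, using primitivity of the substitution and the disjoint-interior structure of $\om(T_j)$ to bound the ``new'' differences introduced at each level.

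For (ii), fix $\lam\in\Spec(\phi)$ with $|\lam|=\gam$. On $G_\lam$ the map $\phi$ expands by exactly $\gam$ (with a rotation in the complex case), which matches the minimal expansion rate of $A$; hence the Hölder exponent from (i) becomes exactly $1$ and $g$ is genuinely Lipschitz on $(\Ck_\infty-\Ck_\infty)\cap G_\lam$. Because $\Dk$ is a $\Q[\phi]$-module this set is additively closed in $G_\lam$, and applying the contractions $\phi^{-k}|_{G_\lam}$ to $(\Ck-\Ck)\cap G_\lam$ generates a dense subgroup of $G_\lam$. A Lipschitz group homomorphism on a dense subgroup of $G_\lam\cong\R$ or $\R^2$ extends uniquely to an $\R$-linear map $L:G_\lam\to H$, and $g$ agrees with $L$ there. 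Finally, for any two points $y_1,y_2\in\Ck_\infty$ on a common leaf $a+G_\lam$ we have $f(y_1)-f(y_2)=g(y_1-y_2)=L(y_1-y_2)$; using that $\phi^{-k}\Ck\subset\Ck_\infty$ supplies, for large $k$, such same-leaf pairs dense in the $G_\lam$-direction, continuity of $f$ forces $f|_{a+G_\lam}=L+f(a)$, i.e.\ affine linear. The matched rate $|\lam|=\gam$ is essential: faster eigendirections give exponent $<1$, where $f$ may genuinely be non-affine, and slower ones give exponent $>1$, forcing $g$ to vanish, so only the threshold leaves are affine.
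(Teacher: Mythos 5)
Your treatment of part (i) is essentially the paper's: both reduce to a H\"older estimate $\|g(\bv)\|\lesssim \|\bv\|^{\log\gam/\log|\lam_{\max}|}$ by pulling a small difference back to control-point scale with $g=A^{-k}\circ g\circ\phi^{k}$, and both defer the combinatorial heart to \cite{KS}. The paper's precise form of that input is the telescoping decomposition $y_1-y_2=\sum_{h=1}^{l}\phi^h w_h$ with the $w_h$ drawn from a fixed finite subset of $\phi^{-1}\Xi$, valid once $\phi^{-l}y_1,\phi^{-l}y_2$ lie in the same or neighboring tiles; your ``minimal lift at bounded scale'' formulation is not quite that statement, but the deferral is comparable and the estimate then goes through the same way.

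Part (ii), however, has a genuine gap. Your argument needs $(\Ck_\infty-\Ck_\infty)\cap G_{\lam}$ to be a dense subgroup of $G_{\lam}$ (you invoke ``same-leaf pairs dense in the $G_{\lam}$-direction''). Since $\Ck\subset\phi^{-1}\Ck$ and $G_{\lam}$ is $\phi$-invariant, $(\Ck_\infty-\Ck_\infty)\cap G_{\lam}=\bigcup_{k\ge 0}\phi^{-k}\bigl((\Ck-\Ck)\cap G_{\lam}\bigr)$, and $(\Ck-\Ck)\cap G_{\lam}$ is the intersection of a uniformly discrete set with a proper subspace in general position: there is no reason for it to contain anything but $\bo$, in which case your dense subgroup is trivial and the extension-of-a-Lipschitz-homomorphism step has nothing to extend. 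This is exactly the obstruction the paper is built around: it first proves (via the spectral decomposition $f=\sum_\theta f_\theta$ of the target and the rate comparison $|\theta|$ versus $|\lam|$) that $f$ restricted to \emph{every} affine slice $a+G_{\lam}$ is Lipschitz with a uniform constant, and then upgrades Lipschitz to affine linear by Rademacher's theorem, Egorov's theorem, a density-point argument adapted to the basis $\phi^{-l}B_1$, the forward $\phi$-invariance and piecewise translation-invariance of the set of good differentiability points, and repetitivity of $\Tk$ to promote positive measure to full measure. None of this machinery can be bypassed by working only with exact return vectors in $G_{\lam}$; indeed the companion Proposition~\ref{prop-KenSol2} exists precisely because even approximate return vectors need not point along $G_{\lam}$, only along the larger subspace $G'+G_{\lam}$. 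Your closing heuristic about exponents bigger or smaller than $1$ is fine as intuition, but it does not substitute for the measure-theoretic step that actually produces affine linearity on the threshold slices.
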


Let $P_{\lam}$ be the canonical projection of $\R^d$ to $G_{\lam}$
commuting with $\phi$, which exists by the diagonalizability
assumption on $\phi$. Denote by $G_{\lam}^\perp = (I -
P_{\lam})\R^d$ the complementary $\phi$-invariant subspace. We
consider the set $(I - P_{\lam})\Xi$, that is, the projection of
$\Xi$ to $G_{\lam}^\perp$ (recall that $\Xi$ is the set of translation vectors between two $\Tk$-tiles of the same type).
In some directions this projection may look like a lattice, i.e.\ be discrete.
We consider the directions in which this set is not discrete, and denote the span of
these directions by $G'$. We will prove that $f$ is affine linear on
all $G'$ slices.
More precisely, for any $\eps > 0$, define
\be \label{def-Geps}
G_{\eps} := \mbox{Span}_{\R} \left(B_{\eps} \cap (I - P_{\lam})\Xi \right)\ \ \ \mbox{and}\ \ \ G':= \bigcap_{\eps > 0} G_{\eps}.
\ee
Now let
\begin{eqnarray} \label{E-equal-to-E'-Elambda}
G := G' + G_{\lam}\,.
\end{eqnarray}
Note that $G$ is a subspace of $\R^d$ which is $\phi$-invariant,
because \[ \mbox{$\phi \Xi \subset \Xi \ \ $ and $\phi P_\lam = P_\lam \phi.$}\]

\begin{prop}  \label{prop-KenSol2}
Under the assumptions of Proposition~\ref{prop-KenSol}, $f|_{a + G}$ is affine linear for
any $a \in \R^d$. \qed
\end{prop}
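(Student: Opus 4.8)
The plan is to bootstrap from Proposition~\ref{prop-KenSol} (which gives affine linearity on $a+G_\lam$ slices for the single critical eigenvalue $\lam$) and extend it to the larger subspace $G=G'+G_\lam$. Since $f$ extends continuously to all of $\R^d$ with $f\circ\phi=A\circ f$, and we already know $f$ is affine linear on every translate of $G_\lam$, the remaining work is to handle the $G'$ directions. First I would recall that $G'$ is spanned by those directions in which the projection $(I-P_\lam)\Xi$ fails to be discrete; the point of this definition is that along such a direction one can find return vectors $\bx\in\Xi$ whose $G_\lam^\perp$-components accumulate near $0$ while still carrying a nontrivial displacement in $G'$.

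The key step is a self-improving or ``renormalization'' argument. Given $a\in\R^d$ and a vector $\bv\in G'$, I want to show $f(a+\bv)-f(a)$ depends linearly on $\bv$. The idea is to approximate $\bv$ using small return vectors: by the non-discreteness built into $G'$, for each $\eps>0$ there exist elements of $B_\eps\cap(I-P_\lam)\Xi$ spanning $G'$, so one can write (or approximate) points of $a+G'$ as limits of points in $\Ck_\infty$ differing by return vectors whose $G_\lam^\perp$-parts are arbitrarily small. Using the intertwining relation $f=A^{-1}\circ f\circ\phi$ repeatedly, together with the expansion property $\|Ay\|\ge\gam\|y\|$ and the Lipschitz bound \eqref{Lipschitz-linear-on-controlPointSet}, I would show that the ``defect from affine linearity'' of $f$ along $G'$ slices contracts under $\phi^{-n}$: applying $\phi$ enlarges the small return vectors into genuine return vectors (since $\phi\Xi\subset\Xi$), and applying $A^{-1}$ then contracts the resulting increment of $f$. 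Combining this contraction with the already-established affine linearity on $G_\lam$ slices should force the defect on $G=G'+G_\lam$ slices to vanish. Concretely, one writes $f$ on an $a+G$ slice as a sum of its behavior along $G_\lam$ (affine by Prop.~\ref{prop-KenSol}(ii)) and along $G'$, and shows the $G'$ part is additive by the approximation-and-renormalization scheme.

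I expect the main obstacle to be controlling the interaction between the two pieces $G'$ and $G_\lam$ simultaneously, i.e.\ verifying that the function is not merely separately affine in each but jointly affine on the full sum $G$. The subtlety is that the renormalization argument naturally produces increments along $G'$, but one must ensure that translating the base point $a$ within $G_\lam$ does not destroy the additivity obtained in the $G'$ directions, and conversely. The cleanest route is probably to prove that the second difference $f(a+\bu+\bv)-f(a+\bu)-f(a+\bv)+f(a)$ vanishes for $\bu,\bv\in G$, by splitting each vector into its $G'$ and $G_\lam$ components and treating the mixed terms via the contraction estimate. One must be careful that the set $G'$ is defined through a limit over $\eps$, so the approximating return vectors genuinely lie in $G'+$(something small in $G_\lam^\perp$), and that this error is absorbed by the exponential contraction coming from $\|A y\|\ge\gam\|y\|$; keeping track of the $G_\lam$-component (which does \emph{not} contract under $A^{-1}$ relative to the critical rate $\gam$) is exactly where the already-proven part (ii) of Proposition~\ref{prop-KenSol} must be invoked to close the argument.
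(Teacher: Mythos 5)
There is a genuine gap, and it lies in the mechanism you propose for the $G'$ directions. Your plan is a contraction/renormalization: express the defect from additivity at $a$ as $A^{-n}$ applied to the defect at $\phi^n a$, and let $\|Ay\|\ge\gam\|y\|$ kill it. But $\gam=|\lam|$ is the \emph{smallest} modulus of an eigenvalue of $\phi$ (that is exactly the critical rate for which part (ii) of Proposition~\ref{prop-KenSol} is proved), while $G'$ is introduced precisely to supply the eigendirections that $G_\lam$ misses, i.e.\ directions with eigenvalues of modulus strictly \emph{larger} than $\gam$. For $\bv$ in such a direction, $\phi^n\bv$ grows like $|\lam_{\max}|^n$, whereas $A^{-n}$ contracts only by $\gam^{-n}$; the Lipschitz/H\"older control on $f$ then bounds the renormalized defect by a quantity of order $\gam^{-n}\|\phi^n\bv\|^{\alpha}$, which is bounded away from zero (and with a genuine Lipschitz bound it even blows up). So the renormalization does not close; it barely closes in the critical direction $G_\lam$, and only after the Rademacher--Egorov differentiability argument used there.

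The ingredient your proposal is missing is the exact translation covariance of $f$ along return vectors: if $T$ and $T+z$ are tiles of $\Tk$, then $f(\xi+z)=f(\xi)+g(z)$ for all $\xi\in\supp(T)$. This comes from the additivity of $g$ on the module $\Dk$ together with the substitution structure ($\om^k(T+z)=\om^k(T)+\phi^k z$), and it --- not contraction --- is what makes the $G'$ directions tractable. With it, the increment of $f$ along a short vector $y=(I-P_\lam)z$ with $z\in\Ck_1-\Ck_1$ equals $g(z)-EP_\lam z$ \emph{independently of the base point}, where $E$ is the common linear part of $f$ on $G_\lam$-slices (independent of the slice by uniform continuity; this also disposes, in one line, of the interaction between $G'$ and $G_\lam$ that you worry about). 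One then joins $\zeta_1$ to a point near the midpoint $(\zeta_1+\zeta_2)/2$, and the reflected point to $\zeta_2$, by two translates of the same grid geodesic built from such $y$'s; the two total increments coincide, which yields the Jensen functional equation for $f|_{a+G'}$, and continuity upgrades this to local, hence global, affine linearity. Without the base-point independence of these increments, neither your second-difference computation nor any contraction estimate forces the defect to vanish.
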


\begin{lemma} \label{G-contain-all-eigenspaces}
Let all eigenvalues of $\phi$ be algebraic conjugates with the
same multiplicity. If $\lambda$ is the smallest in modulus
eigenvalue of $\phi$, then
\[G = G'+ G_{\lambda} = \R^d.\]
\end{lemma}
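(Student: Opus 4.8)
## Proof Strategy for Lemma~\ref{G-contain-all-eigenspaces}

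\textbf{Setup and goal.} The plan is to show that when $\lambda$ is the smallest-modulus eigenvalue of $\phi$, the subspace $G = G' + G_\lambda$ exhausts all of $\R^d$. The key leverage is Proposition~\ref{prop-KenSol2}: the map $f$ (coming from Proposition~\ref{prop-KenSol} with $\gamma = |\lambda|$, the smallest modulus, so that the expansion estimate \eqref{eq-newk1} holds) is affine linear on every translate $a + G$. Since $f$ satisfies $f \circ \phi = A \circ f$ and is $\phi$-equivariant, proving $G = \R^d$ will force $f$ to be globally affine linear, which is exactly the rigidity we want to feed back into Theorem~\ref{isomorphicImageOfC-contained}. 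So the entire content is a \emph{dimension count}: I must show that $G'$, together with $G_\lambda$, spans everything.

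\textbf{Main argument.} The recipe is to exploit that all eigenvalues of $\phi$ are Galois conjugates of $\lambda$ with equal multiplicity, and that $\lambda$ has \emph{smallest} modulus. Recall $G'$ was defined in \eqref{def-Geps} as the span of the directions in which the projection $(I - P_\lambda)\Xi$ fails to be discrete. The heart of the matter is to prove that this projection is non-discrete in as many directions as possible. Here I would argue by contradiction: suppose $G = G' + G_\lambda$ is a \emph{proper} $\phi$-invariant subspace of $\R^d$. Because $\phi$ is diagonalizable and $G$ is $\phi$-invariant, $G$ must be a sum of some of the eigenspaces $G_\mu$, $\mu \in \Spec(\phi)$. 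The plan is to show that if some eigenspace $G_\mu$ (with $\mu \neq \lambda$) were \emph{not} contained in $G$, then the projection $(I - P_\lambda)\Xi$ would be discrete in the $G_\mu$-direction, forcing that projected set to behave like a lattice. But $|\mu| > |\lambda|$ (as $\lambda$ is minimal in modulus and eigenvalues of equal modulus are handled by the conjugacy structure), so $\phi$ expands in the $G_\mu$ direction. Since $\Xi$ is $\phi$-invariant and relatively dense, this would produce too many projected points in a bounded region as one iterates $\phi^{-1}$, contradicting discreteness unless the whole structure collapses. The algebraic input is that the minimal polynomial $p(t)$ is irreducible and all its roots are conjugate, so the $\Q[\phi]$-module structure ties the coordinates together: one cannot separate a single conjugate $\mu$ from the rest while keeping return-vector projections discrete.

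\textbf{The main obstacle.} The hard part will be controlling the discreteness of $(I - P_\lambda)\Xi$ in the non-$\lambda$ directions. One cannot simply assert non-discreteness; one must use the Galois-conjugacy of the eigenvalues together with the minimality of $|\lambda|$ to show that return vectors, when projected, accumulate. I expect this to hinge on a Pisot/Vijayaraghavan-type dichotomy applied \emph{in reverse}: since $\lambda$ is strictly smallest in modulus among a full set of conjugates, the Galois conjugates $\mu$ cannot all satisfy $|\mu| < 1$, so $\Spec(\phi)$ cannot itself act as a ``Pisot family of the complement'' in the $G_\lambda^\perp$ directions. This non-Pisot behavior in the complementary directions is precisely what forces the projected return vectors to be dense rather than discrete, yielding $G' = G_\lambda^\perp$ and hence $G = \R^d$. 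Making the accumulation argument rigorous—translating the arithmetic statement about conjugates into the geometric statement about $(I - P_\lambda)\Xi$—is where the real work lies, and I would lean on the finitely-generated $\Q[\phi]$-module hypothesis from Proposition~\ref{prop-KenSol} to keep the projected sets inside a fixed finitely generated group so that non-discreteness becomes genuine density.
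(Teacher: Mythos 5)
Your overall framing (reduce the lemma to a dimension count for the $\phi$-invariant subspace $G$) is reasonable, but the mechanism you propose for that count does not close, and it is not the one the paper uses. The paper's proof is essentially a citation: the ``Conclusion of the proof of Theorem 3.1'' in \cite{KS} shows that $G$ contains, for \emph{every} Galois conjugate $\gamma$ of $\lambda$ with $|\gamma|\ge|\lambda|$, a $\phi$-invariant subspace of $G_\gamma$ of dimension at least $\dim G_\lambda$; under the equal-multiplicity hypothesis and the minimality of $|\lambda|$ this immediately forces $G=\R^d$. The engine there is precisely the map $f$ of Propositions~\ref{prop-KenSol} and \ref{prop-KenSol2}, which you mention in your setup but never use in your main argument: $f|_{a+G}$ is affine linear and satisfies $f\circ\phi=A\circ f$, so its (nonzero) linear part is a linear intertwiner between $\phi|_G$ and $A$ whose kernel and image are invariant; comparing spectra and dimensions on the two sides is what transports $\dim G_\lambda$ worth of dimensions into $G\cap G_\gamma$ for each conjugate $\gamma$ with $|\gamma|\ge|\lambda|$. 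Without this input, there is nothing to count.

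The concrete gaps in your contradiction argument are these. First, you assert that if some $G_\mu\not\subset G$ then $(I-P_\lambda)\Xi$ is discrete ``in the $G_\mu$-direction''; but $G'$ is by definition the span of arbitrarily short vectors of $(I-P_\lambda)\Xi$, and ``short projected return vectors fail to span $G_\mu$'' does not imply discreteness of any further projection --- moreover a $\phi$-invariant subspace of a diagonalizable map need not be a sum of whole eigenspaces $G_\mu$ (these have dimension $>1$ in general), which is exactly why \cite{KS} needs the quantitative bound $\dim(G\cap G_\gamma)\ge\dim G_\lambda$ rather than an all-or-nothing dichotomy. Second, your accumulation mechanism is backwards: $\Xi$ is only forward-invariant ($\phi\Xi\subset\Xi$, not $\phi^{-1}\Xi\subset\Xi$), and every eigenvalue of $\phi$ has modulus greater than one, so iterating $\phi$ spreads projected return vectors out rather than piling them up; expansion plus relative density alone yields no contradiction with discreteness. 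Third, the appeal to a ``Pisot/Vijayaraghavan dichotomy in reverse'' cannot be the mechanism, because the lemma (and Theorem~\ref{isomorphicImageOfC-contained}, which depends on it) holds with no Pisot-family assumption whatsoever. The step ``$G_\mu\not\subset G\Rightarrow$ contradiction'' is exactly the content of the lemma, and in your proposal it is left unproved.
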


\begin{proof} This is proved in \cite{KS} (although not stated there
explicitly). Indeed, in the last part of \cite{KS}, labeled {\em
Conclusion of the proof of Theorem 3.1}, it is proved that the
subspace $G$ (denoted $E$ there) contains, for each conjugate of
$\lam$ greater or equal than $\lam$ in modulus, an eigenspace of
dimension at least $\dim(G_\lam)$. Note that in \cite{KS} the
setting is more general, of an arbitrary diagonalizable over $\C$
matrix $\phi$. In our case all eigenvalues are conjugates of the
same multiplicity, and $\lam$ is the smallest in modulus, hence
$G$ contains the entire $\R^d$.
\end{proof}

Since $\Tk$ has FLC, the $\Z$-module generated by $\Ck$, denoted
by $\langle \mathcal{C} \rangle_{\Z}$, is finitely generated. Let
$\{\bv_1, \dots, \bv_N\}$ be a generating set in $\mathcal{C}$.
For each $\bv_n$ with $1 \le n \le N$,
\[\bv_n = a_{11}^{(n)}
\balpha_1 + \cdots + a_{1m}^{(n)} \phi^{m -1} \balpha_1 + \cdots +
a_{J1}^{(n)} \balpha_J + \cdots + a_{Jm}^{(n)} \phi^{m -1}
\balpha_J\] where $a_{jk}^{(n)} \in \R$, $1 \le j \le J$, and $1
\le k \le m$. Thus
\[\mathcal{C} \subset \mathcal{D}:=\sum_{j=1}^J \sum_{k=1}^{m}
\sum_{n=1}^N \Q[\phi] (a_{jk}^{(n)} \balpha_j)= \sum_{j=1}^J \sum_{k=1}^{m}
\sum_{n=1}^N \Q[\phi_j] (a_{jk}^{(n)} \balpha_j).\]

\begin{lemma} \label{lem-field}
$\Q[\phi_{j}]$ is a field.
\end{lemma}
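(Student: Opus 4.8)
The plan is to show that $\Q[\phi_j]$ is a field by exhibiting it as a quotient of the polynomial ring $\Q[x]$ by the ideal generated by the minimal polynomial of $\phi_j$, and then using that this minimal polynomial is irreducible. First I would identify $\Q[\phi_j]$ with $\Q[x]/(q(x))$, where $q(x)$ is the minimal polynomial of the linear map $\phi_j = \phi|_{H_j}$ over $\Q$. The map $\Q[x] \to \Q[\phi_j]$ sending $x \mapsto \phi_j$ is a surjective ring homomorphism whose kernel is exactly the ideal of polynomials annihilating $\phi_j$, and this ideal is principal, generated by $q(x)$.

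The crux is to verify that $q(x)$ is irreducible over $\Q$. Here I would invoke the standing assumption of the paper: all eigenvalues of $\phi$ are algebraic conjugates of one another with the same multiplicity, and there is a monic irreducible $p(t) \in \Z[t]$ with $p(\lam) = 0$ for every $\lam \in \Spec(\phi)$. Since $\phi$ is diagonalizable over $\C$ and $\phi_j \approx \psi$ has eigenvalues precisely $\lam_1,\ldots,\lam_s,\lam_{s+1},\ov{\lam}_{s+1},\ldots,\lam_{s+t},\ov{\lam}_{s+t}$, each a root of the irreducible polynomial $p$, the minimal polynomial of $\phi_j$ over $\Q$ is exactly $p(t)$ itself: it must be divisible by the minimal polynomial of each eigenvalue (which is $p$ by irreducibility), and $p(\phi_j) = 0$ because $\phi_j$ is diagonalizable and $p$ vanishes on all its eigenvalues. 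Hence $q = p$ is irreducible.

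Once irreducibility of $q = p$ is established, the conclusion is the standard fact from algebra that $\Q[x]/(p(x))$ is a field whenever $p$ is irreducible over the field $\Q$: the ideal $(p(x))$ is maximal in the principal ideal domain $\Q[x]$ precisely because $p$ is irreducible, and the quotient of a commutative ring by a maximal ideal is a field. Therefore $\Q[\phi_j] \cong \Q[x]/(p(x))$ is a field.

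The main obstacle, such as it is, is the careful identification of the minimal polynomial of $\phi_j$ with the irreducible $p$; this is where the hypotheses (diagonalizability and all eigenvalues being conjugates of the common irreducible $p$) are genuinely used. The rest is routine commutative algebra. One small point to double-check is that the diagonalizability of $\phi$ over $\C$ guarantees $p(\phi_j) = 0$ directly, rather than merely that $p$ divides the characteristic polynomial; this follows because a diagonalizable matrix is annihilated by the product of distinct linear factors corresponding to its eigenvalues, and all these eigenvalues are roots of $p$.
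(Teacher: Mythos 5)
Your proof is correct and is essentially the paper's argument in a different packaging: both hinge on the observation that, since $\phi_j$ is diagonalizable and all its eigenvalues are roots of the single irreducible polynomial $p$, we have $p(\phi_j)=0$, and then the field property follows from irreducibility of $p$. The paper writes this as an explicit B\'ezout identity $h_1p+h_2q=1$ producing an inverse for each nonzero $q(\phi_j)$, while you identify $\Q[\phi_j]$ with $\Q[x]/(p)$ and quote the maximal-ideal fact; these are the same computation, and if anything your version is slightly more careful about why $p(\phi_j)=0$.
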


\begin{proof}
This is clearly a ring, so we just need to show that $p(\phi_j)$
has an inverse for $p \in \Q[x]$, if it is a non-zero matrix.
We need to use that all the eigenvalues of $\phi_j$ are conjugates
so they have the same irreducible polynomial $p(x)$. If $q(x) \in
\Z[x]$ is monic, such that $p(x)$ does not divide $q(x)$, then we
can find monic polynomials $h_1(x),h_2(x) \in \Z[x]$ such that
$h_1(x) p(x) + h_2(x) q(x) = 1$, which means that $h_2(\lam)$ is
the inverse of $q(\lam)$ for any eigenvalue $\lam$ of $\phi_j$.
\end{proof}

Let $\mathcal{D}_{j} = P_{j}(\mathcal{D})$, where $P_j$ is the
canonical projection of $\R^d$ onto $H_j$ as defined in
(\ref{def-projection}). Observe that $\mathcal{D}_{j}$ is a vector
space over the field $\Q[\phi_{j}]$, so we can write
\[\mathcal{D}_{j} = \bigoplus_{t=1}^{r_j} \Q[\phi_j] (a_{jt} \balpha_j)
= \bigoplus_{t=1}^{r_{j}} \Q[\phi] (a_{jt} \balpha_j) ,\]
where $a_{j1} =1$, $a_{jt} \in \R$ with $1 \le t \le r_{j}$, and
$\{a_{j1}, \dots, a_{jr_{j}}\}$ is linearly independent over $\Q$.
Note that $$\mathcal{D} = \mathcal{D}_{1} \bigoplus \cdots
\bigoplus \mathcal{D}_{J}.$$
We define $\Q[\phi]$-module homomorphisms
\[ \sigma_{j} : \Dk \rightarrow \Q[\phi] \balpha_{j}\]
such that \begin{eqnarray*} \left\{ \begin{array}{ll}
                      \sigma_{j}(a_{jt} \phi^n \balpha_{j}) = \phi^n \balpha_{j}
                      & \mbox{for any $ 1 \le t \le r_{j}$,  $n \in \Z_{\ge 0}$}\\
                      \sigma_{j}(a_{uw} \phi^n \balpha_{u}) =\bo
                      & \mbox{for any $u \neq j$,  $n \in
                      \Z_{\ge 0}$}\,.
                      \end{array}
                      \right. \end{eqnarray*}
Recall that $\mathcal{C}_{\infty} := \bigcup_{k=0}^{\infty} \phi^{-k}
\mathcal{C}$. Observe that $\Dk \supset \Ck_\infty$, since $\phi^{-1}$ is a rational linear combination of $\{I,\phi,\ldots,\phi^{m-1}\}$ by (\ref{CH}).
We define $\sigma_{j}':\mathcal{C}_{\infty} \to
\Q[\phi] \balpha_{j}$ to be the restriction of $\sig_j$, that is, $\sig_j': = \sig_j|_{\Ck_\infty}$.
\medskip

Using the same arguments as in \cite[Lemma 5.3]{sol-eigen} (which followed \cite{Thur}), we
obtain the next lemma.

\begin{lemma} \label{inequality-on-controlPoints}
For any $\xi, \xi' \in \Ck$,
\[\|\sigma'_{j}(\xi) - \sigma'_{j}(\xi')\| \le C ||\xi - \xi'|| \ \ \ \mbox{for some $C > 0$.}\]
\end{lemma}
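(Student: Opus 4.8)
The first step is to reduce the claimed estimate to a bound on $\sigma_j$ over the difference set $\Ck-\Ck$. Since $\sigma_j$ is a $\Q[\phi]$-module homomorphism it is additive, and $\sigma'_j$ is merely its restriction to $\Ck_\infty\supset\Ck$; hence for $\xi,\xi'\in\Ck$ we have $\sigma'_j(\xi)-\sigma'_j(\xi')=\sigma_j(\xi-\xi')$, where $\xi-\xi'\in\langle\Ck\rangle_\Z\subset\Dk$ lies in the domain of $\sigma_j$. Thus it suffices to prove $\|\sigma_j(\eta)\|\le C\|\eta\|$ for every $\eta\in\Ck-\Ck$. Two structural facts will drive the argument. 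First, because $\sigma_j$ commutes with $\phi$ (being $\Q[\phi]$-linear) and $\phi$ is diagonalizable over $\C$, the map $\sigma_j$ preserves eigencomponents: the $E_{jk}$-block of $\sigma_j(\eta)$ is manufactured only from the components of $\eta$ in the eigendirections with the same eigenvalue $\lam_{jk}$, and in particular vanishes if those components vanish. Second, the norm (\ref{def-norm}) is the maximum of the block norms, on each of which $\phi$ acts as the pure scaling $\|\phi\,\bx_{jk}\|=|\lam_{jk}|\,\|\bx_{jk}\|$. Together these let me seek the Lipschitz bound one eigenvalue-class at a time and then take the maximum over the blocks.

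The second step sets up the hierarchical telescoping in the manner of \cite{Thur}. Using the substitution together with the control-point property $\phi(c(T))=c(\gamma T)$, every control point $\xi=c(T)$ has a base-$\phi$ expansion $\xi=\phi^{n}c(T^{(n)})+\sum_{k=0}^{n-1}\phi^{k}w_k$, where $T^{(n)}$ is the level-$n$ super-tile containing $T$ and each digit $w_k$ lies in a fixed finite set $\Delta$; the finiteness of $\Delta$ comes from FLC and the finiteness of the sets $\Dk_{ij}$. For $\xi,\xi'\in\Ck$ sharing a common level-$n$ ancestor the leading terms cancel, so $\eta=\sum_{k=0}^{n-1}\phi^{k}v_k$ with $v_k=w_k-w'_k\in\Delta-\Delta$, again a finite set. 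Applying $\sigma_j$ and using $\sigma_j\phi=\phi\sigma_j$ yields the telescoping identity $\sigma_j(\eta)=\sum_{k=0}^{n-1}\phi^{k}\sigma_j(v_k)$, in which $\sigma_j(v_k)$ ranges over the finite set $\sigma_j(\Delta-\Delta)$.

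The third step is the estimate itself, and this is where the main obstacle lies. Passing to a fixed block $E_{jk}$ and using the pure scaling of $\phi$, the target is bounded by a geometric sum $\|(\sigma_j(\eta))_{jk}\|\le M\sum_{l=0}^{n-1}|\lam_{jk}|^{l}\lesssim|\lam_{jk}|^{n}$. The difficulty is to dominate this by $C\|\eta\|$: because $\phi$ is anisotropic (its eigenvalues need not share a common modulus) one cannot simply invoke $\|\eta\|\gtrsim|\lam_{jk}|^{n}$, since the leading digit $v_{n-1}$ may live only in the slowly-expanding directions, and within a single modulus class cancellations can shrink $\|\eta\|$ relative to the naive top term. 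The resolution, following \cite[Lemma 5.3]{sol-eigen} (itself modelled on \cite{Thur}), is to carry out the comparison class by class: since $\sigma_j$ preserves eigencomponents, the $\lam_{jk}$-part of $\sigma_j(\eta)$ is built solely from the $\lam_{jk}$-parts of the digits $v_l$, whose contribution to $\eta$ carries the identical weights $|\lam_{jk}|^{l}$; matching these scale-by-scale against the block norm (\ref{def-norm}), with the finiteness of $\sigma_j(\Delta-\Delta)$ furnishing a uniform per-digit ratio (and the $0/0$ case excluded because $\sigma_j$ kills a vanishing class component), gives $\|(\sigma_j(\eta))_{jk}\|\le C_k\|\eta\|$ for each block. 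Taking $C=\max_k C_k$ and the maximum over blocks in (\ref{def-norm}) then yields $\|\sigma_j(\eta)\|\le C\|\eta\|$. The delicate point throughout is the uniformity of the constants $C_k$ in the truncation level $n$, which is precisely what the FLC-driven finiteness of the digit set secures.
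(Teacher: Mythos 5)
Your reduction to bounding $\sigma_j$ on $\Ck-\Ck$ and your telescoping identity $\sigma_j(\eta)=\sum_{k=0}^{n-1}\phi^k\sigma_j(v_k)$ are fine, but the third step --- where you acknowledge the real difficulty and claim to resolve it by ``class-by-class matching'' --- has a genuine gap. First, the asserted ``uniform per-digit ratio'' on each eigencomponent is unjustified and can fail outright: on the $\lam_k$-eigenline the (complexified) map $\sigma_j$ sends $\sum_t c_t a_{jt} z_{jk}\mapsto \sum_t c_t z_{jk}$, and since the reals $a_{jt}$ are only assumed $\Q$-independent while the $c_t$ range over $\Q(\lam_k)$, a digit can have vanishing (or tiny) $\lam_k$-component while its image under $\sigma_j$ has a nonzero $\lam_k$-component; so the ``$0/0$ case'' is not excluded and no finite ratio exists on the finite digit set. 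Second, even granting such a ratio, you would be comparing $\sum_l|\lam_k|^l\|(\sigma_j v_l)_{jk}\|$ with $\|\eta_{jk}\|=\|\sum_l\phi^l(v_l)_{jk}\|$, and the cancellations you yourself point out make the latter much smaller than the former. What the telescoping actually yields is $\|\sigma_j(\eta)\|\le M|\lam_{\max}|^{n}$ against the lower bound $\|\eta\|\ge c|\lam_{\min}|^{n}$, i.e.\ a bound of the form $C\|\eta\|^{\beta}$ with $\beta=\log|\lam_{\max}|/\log|\lam_{\min}|\ge 1$; this exceeds linear growth at large scales precisely when the eigenvalue moduli differ, which is the self-affine case the lemma is for. (Note the hierarchical argument is the right tool for the \emph{small-scale} H\"older estimate on $\Ck_\infty$ in the appendix, where the exponent $\log\gamma/\log|\lam_{\max}|\le 1$ points the right way; here the scales are reversed.)

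The paper's proof (by reference to \cite[Lemma 5.3]{sol-eigen}, following Thurston) avoids the hierarchy entirely and works at large scales by chaining: since $\Ck$ is uniformly discrete one only needs the bound for $\|\xi-\xi'\|\ge\delta_0>0$. Take the tiles $T=T_0,T_1,\ldots,T_N=T'$ met, in order, by the segment from $\xi=c(T)$ to $\xi'=c(T')$; then $N\le C_1\|\xi-\xi'\|+C_2$ by a volume count, consecutive tiles are adjacent, and by FLC the differences $c(T_{i+1})-c(T_i)$ lie in a fixed finite set $F\subset\Ck-\Ck\subset\Dk$. Additivity of $\sigma_j$ gives $\|\sigma'_j(\xi)-\sigma'_j(\xi')\|\le N\max_{v\in F}\|\sigma_j(v)\|\le C\|\xi-\xi'\|$. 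You should replace your step 3 by this argument; no spectral decomposition of $\sigma_j$ is needed.
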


\medskip

Now we use Prop.\,\ref{prop-KenSol},
Prop.\,\ref{prop-KenSol2} and
Lemma\,\ref{G-contain-all-eigenspaces} to prove
Theorem\,\ref{isomorphicImageOfC-contained}, and assume that all the assumptions of the latter hold. In addition, suppose that
the set of control points contains $\balpha_1,\ldots,\balpha_J$.
Fix $1 \le j \le
J$. We consider the maps $g = \sig_j:\,\Dk\to H_j$ and
$f = \sig'_{j}:\,\Ck_\infty \to H_{j}$, and let $A = \phi_{j} =
\phi|_{H_{j}}$.
Note
that (\ref{eq-newk1}) holds with $\gam$ equal to the smallest absolute value of
eigenvalues of $\phi_{j}$ (or $\phi$) and the norm defined as in (\ref{def-norm}).
Thus, all the hypotheses of Prop.\,\ref{prop-KenSol}, Prop.\,\ref{prop-KenSol2} and
Lemma\,\ref{G-contain-all-eigenspaces} are satisfied, and we obtain that
for each $1 \le j \le J$, the (extended) map $\sigma'_{j}$ is linear on
$\R^d$ and commutes with $\phi$.

\begin{lemma} \label{rho'-is-an-isomorphism}
The map $\rho':= \sigma'_{1} + \cdots + \sigma'_{J}$ is the identity map on
$\R^d$.
\end{lemma}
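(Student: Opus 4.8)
The plan is to verify that $\rho' := \sigma'_1 + \cdots + \sigma'_J$ acts as the identity on the basis $W = \{\balpha_1, \ldots, \phi^{m-1}\balpha_1, \ldots, \balpha_J, \ldots, \phi^{m-1}\balpha_J\}$ of $\R^d$ (Corollary~\ref{basisWithAlphas}); since each $\sigma'_j$ has been extended to a linear map on all of $\R^d$ commuting with $\phi$, and a linear map is determined by its values on a basis, this suffices. Because $\rho'$ is linear and commutes with $\phi$, it is enough to check that $\rho'(\balpha_j) = \balpha_j$ for each $1 \le j \le J$; the images of $\phi^k \balpha_j$ then follow automatically from $\rho'(\phi^k \balpha_j) = \phi^k \rho'(\balpha_j) = \phi^k \balpha_j$.

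First I would recall that we are in the reduced situation where the control point set $\Ck$ contains $\balpha_1, \ldots, \balpha_J$, so each $\balpha_j \in \Ck \subset \Ck_\infty \subset \Dk$. On the module $\Dk$ the homomorphisms $\sigma_j$ are defined explicitly by $\sigma_j(a_{jt}\phi^n \balpha_j) = \phi^n \balpha_j$ and $\sigma_j(a_{uw}\phi^n \balpha_u) = \bo$ for $u \ne j$, with $a_{j1} = 1$. Evaluating at $\balpha_j = a_{j1}\balpha_j$ (taking $t=1$, $n=0$) gives $\sigma_j(\balpha_j) = \balpha_j$, while $\sigma_u(\balpha_j) = \bo$ for $u \ne j$ because $\balpha_j \in H_j$ lies in the kernel of the projections feeding the other components. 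Since $\sigma'_j = \sigma_j|_{\Ck_\infty}$ agrees with $\sigma_j$ on $\Ck_\infty$ and $\balpha_j \in \Ck$, the extended (now linear) maps satisfy $\sigma'_j(\balpha_i) = \delta_{ij}\,\balpha_i$. Summing over $j$ yields $\rho'(\balpha_i) = \sum_{j=1}^J \sigma'_j(\balpha_i) = \balpha_i$, as required.

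The only subtlety, which I would state carefully rather than treat as an obstacle, is the transition from the module-theoretic definition on $\Dk$ to the continuous/linear extension supplied by Proposition~\ref{prop-KenSol} and Lemma~\ref{G-contain-all-eigenspaces}. The extension is defined so that it agrees with $\sigma'_j$ on $\Ck_\infty$, and the point $\balpha_j$ itself lies in $\Ck$, so there is no ambiguity: the value $\sigma'_j(\balpha_j) = \balpha_j$ holds both for the module homomorphism and its linear extension. Thus $\rho'$ coincides with the identity on the basis $W$, and being linear, $\rho' = \mathrm{Id}$ on $\R^d$. The whole argument is essentially a bookkeeping check that the projection-type maps $\sigma'_j$ decompose a vector according to its $H_j$-components along the $\Q[\phi]$-module structure and reassemble it, so I do not expect any genuine difficulty beyond correctly matching the module definition to the extended linear map.
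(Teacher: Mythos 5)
Your proof is correct and follows essentially the same route as the paper: both check $\rho'(\balpha_j)=\balpha_j$ directly from the definition of the $\sigma_j$, then use linearity and $\rho'\phi=\phi\rho'$ to see that $\rho'$ fixes the basis $W$ and hence equals the identity. Your extra care in matching the module homomorphism with its continuous linear extension at the points $\balpha_j\in\Ck$ is a reasonable elaboration of what the paper leaves implicit.
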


\begin{proof} Note that
\[\rho' : \R^d \rightarrow \R^d \ \ \mbox{is linear and $\rho' \phi = \phi \rho'$}
\]
and for any $1 \le j \le J$,
\begin{eqnarray*} \rho'(\balpha_j) &=& (\sigma'_{1} + \cdots +
\sigma'_{J})(\balpha_j) = 0 + \cdots + 0+ \sigma'_j (\balpha_j) + 0+ \cdots + 0=  \balpha_j \,.
\end{eqnarray*}
Since $W = \{\balpha_1, \dots, \phi^{m-1} \balpha_1, \dots,
\balpha_J, \dots, \phi^{m -1} \balpha_J \}$ is a basis of $\R^d$,
$\rho'$ is the identity map on $\R^d$.
\end{proof}

Now we do not assume that the control point set of $\Tk$ contains
$\balpha_{1}, \ldots, \balpha_{J}$ in order to prove
Theorem~\ref{isomorphicImageOfC-contained}. Instead, we apply the above
propositions and lemmas to $\tau(\mathcal{C})$.

\medskip

\noindent {\it{Proof of
Theorem\,\ref{isomorphicImageOfC-contained}}.} By Lemma~\ref{rho'-is-an-isomorphism}, for each
$\xi \in \mathcal{C}$,
\begin{eqnarray} \label{containment-of-C}
 \tau(\mathcal{\xi}) = \rho'(\tau(\mathcal{\xi})) &=& (\sigma'_{1} + \cdots +
 \sigma'_{J})(\tau(\xi)) = (\sigma_{1} + \cdots +
 \sigma_{J})(\tau(\xi)) \nonumber \\
& \in & \Q[\phi]\balpha_{1} + \cdots + \Q[\phi]\balpha_{J}\,.
\end{eqnarray}
Since $\mathcal{C}$ is finitely generated, we multiply (\ref{containment-of-C}) by a common
denominator $b \in \Z_+$ to get \be b
\cdot \tau(\mathcal{C}) \subset \Z[\phi] \balpha_{1} + \cdots +
\Z[\phi] \balpha_{J} \,. \nonumber \ee Let $\rho := \frac{1}{b}
\cdot \tau^{-1}$. Then
\[\mathcal{C} \subset \rho(\Z[\phi] \balpha_{1} + \cdots +
\Z[\phi] \balpha_{J})\] where $\rho$ is an isomorphism of $\R^d$ which
commutes with $\phi$. \qed

\section{Appendix}

\noindent We give the proofs of Prop.\,\ref{prop-KenSol} and
Prop.\,\ref{prop-KenSol2} after a sequence of auxiliary lemmas.
The arguments are similar to those in \cite{KS}, but we
present them in a more general form for our purposes.

Denote by $B_{R}(a)$ the open ball of radius $R$ centered at $a$ and let $B_R:= B_R(0)$. We will also write $\ov{B}_R(a)$ for the closure of $B_R(a)$.
Let $r = r(\Tk)
>0$ be such that for every $a\in \R^d$ the ball $\ov{B}_r(a)$ is covered by a tile containing $a$ (which need not be unique) and its neighbors.
Let $\lam_{\max}$ be the largest eigenvalue of $\phi$.

\begin{lemma}
The function $f$ is uniformly continuous on $\mathcal{C}_{\infty}$.
\end{lemma}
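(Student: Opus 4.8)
The plan is to prove uniform continuity of $f = \sigma'_j$ on $\Ck_\infty$ by exploiting the self-affine structure, specifically the intertwining relation $f = A^{-1}\circ f\circ\phi$ together with the Lipschitz-type estimate (\ref{Lipschitz-linear-on-controlPointSet}) from Lemma~\ref{inequality-on-controlPoints} which controls $f$ on pairs of \emph{nearby} control points. The key idea is a standard \emph{renormalization} or bootstrapping argument: the Lipschitz bound gives control at a fixed scale (neighboring control points, which are separated by at most a bounded distance since $\Ck$ is Delone), and the relation $f(\phi x)=A f(x)$ transfers this control across all scales. Since $A$ is expanding with $\|Ay\|\ge\gam\|y\|$ for $\gam>1$, applying $A^{-1}$ contracts, so iterating $\phi^{-k}$ and using $f=A^{-1}\circ f\circ\phi$ lets us estimate $f$ on points that are close together by first blowing them up via $\phi^k$ until their images are at a comparable-to-$r$ scale, where the Lipschitz estimate applies.

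First I would fix the geometric input: because $\Ck$ is a Delone set and $\Tk$ has FLC, there is a constant bounding the distance between a control point and those of its neighboring tiles, and the ball $\ov B_r(a)$ is covered by a tile containing $a$ and its neighbors (this is exactly the role of $r=r(\Tk)$ introduced just before the lemma). Next I would take two points $y_1,y_2\in\Ck_\infty$ with $\|y_1-y_2\|$ small and choose $k\ge 0$ minimal so that $\phi^k y_1,\phi^k y_2$ lie in $\Ck$ (recall $\Ck_\infty=\bigcup_k\phi^{-k}\Ck$) and so that their separation $\|\phi^k y_1-\phi^k y_2\|$ has grown to be comparable to $r$; the expansion rate $\gam$ controls $k$ in terms of $\|y_1-y_2\|$. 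At that scale the two (rescaled) control points can be joined by a bounded-length chain of control points each consecutive pair of which are neighbors, so applying Lemma~\ref{inequality-on-controlPoints} along the chain yields $\|f(\phi^k y_1)-f(\phi^k y_2)\|\le C'$ for a uniform $C'$. Then I would pull the estimate back down: since $f(\phi^k y)=A^k f(y)$, we get $f(y_1)-f(y_2)=A^{-k}\bigl(f(\phi^k y_1)-f(\phi^k y_2)\bigr)$, and the contraction $\|A^{-k}\|\le\gam^{-k}$ combined with the choice of $k$ (so that $\gam^{-k}\asymp\|y_1-y_2\|$ up to constants) gives $\|f(y_1)-f(y_2)\|\le C''\|y_1-y_2\|^{\theta}$ for some modulus of continuity, in fact a genuine Lipschitz or Hölder bound, uniform in the pair.

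The main obstacle will be the chaining step at the blown-up scale: I must guarantee that the two points $\phi^k y_1,\phi^k y_2$, once they are at distance $\asymp r$, really are connected by a \emph{uniformly} bounded number of consecutive neighboring control points, so that the additivity $f(y)-f(y')=\sum f(\cdot)-f(\cdot)$ over the chain together with (\ref{Lipschitz-linear-on-controlPointSet}) gives a constant independent of $y_1,y_2$. This requires the relative denseness and uniform discreteness of $\Ck$ (the Delone property) to bound both the number of links and the step sizes, and it is exactly where FLC and the definition of $r(\Tk)$ enter. A secondary technical point is choosing $k$ correctly when $y_1,y_2$ are already in $\Ck$ (so $k=0$ may force a direct appeal to the Lipschitz estimate) versus when they lie in a deeper level $\phi^{-k}\Ck$; handling the boundary between ``already close enough'' and ``needs renormalizing'' cleanly is the part that demands care, but it is routine once the chaining constant is in hand. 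I expect the remaining estimates to follow by straightforward manipulation of the intertwining relation and the expansion bound $\gam>1$.
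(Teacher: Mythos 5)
Your overall renormalization strategy points in the right direction, but there is a genuine gap at its center: the level $k$ at which $\phi^k y_1,\phi^k y_2$ land in $\Ck$ and the level at which their separation becomes comparable to $r$ are two independent quantities, and you cannot in general choose a single $k$ satisfying both. If $y_i=\phi^{-s}c_i$ with $c_i\in\Ck$, membership in $\Ck$ may force $k$ as large as $s$, while at level $s$ the separation $\|c_1-c_2\|$ can be arbitrarily large compared with $r$ (it is of order up to $|\lam_{\max}|^{s}\|y_1-y_2\|$, and $s$ is not controlled by $\|y_1-y_2\|$). If you instead fall back on applying the Lipschitz estimate (\ref{Lipschitz-linear-on-controlPointSet}) directly at level $s$ and pulling back by $A^{-s}$, you get $\|f(y_1)-f(y_2)\|\le C\gam^{-s}\|c_1-c_2\|\le C(|\lam_{\max}|/\gam)^{s}\|y_1-y_2\|$, and the factor $(|\lam_{\max}|/\gam)^{s}$ is unbounded whenever $\phi$ has eigenvalues of different moduli --- which is exactly the self-affine situation the paper is concerned with (e.g.\ the example with $\lam_1\approx 2.199$, $\lam_2\approx -1.912$). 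Your argument closes only in the isotropic case $|\lam_{\max}|=\gam$.

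The paper's proof avoids this with a device your proposal does not contain: a hierarchical decomposition of the difference at the top level, $y_1-y_2=\sum_{h=1}^{l}\phi^h w_h$, where the $w_h$ range over a \emph{finite} set $W\subset\phi^{-1}\Xi$ depending only on $\Tk$ (this is the input from \cite{KS}), and $l$ is the level at which $\phi^{-l}y_1,\phi^{-l}y_2$ fall into the same or neighboring tiles, chosen via $|\lam_{\max}|^{s-l}\delta\le r$. Applying $g$ and the intertwining termwise converts one uncontrolled Lipschitz bound into a geometric series $\sum_{h=1}^{l}A^{h-s}g(w_h)$, whose norm is at most $L'\gam^{l-s}$ with $L'$ determined by $\max_{w\in W}\|g(w)\|$, hence uniform. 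The mismatch between the rate $|\lam_{\max}|$ (governing when the points reach scale $r$) and the rate $\gam$ (governing the contraction of $A^{-1}$) is precisely what produces the H\"older exponent $\alpha=\log\gam/\log|\lam_{\max}|\le 1$; your expectation of a genuine Lipschitz bound is another symptom of the missing ingredient. Note also that the paper's argument for this lemma uses only the additivity of $g$, the relation $g=A^{-1}\circ g\circ\phi$, and the finiteness of $W$ --- not the estimate (\ref{Lipschitz-linear-on-controlPointSet}) itself --- so the chaining through neighboring control points that you propose is not needed once the decomposition is in hand.
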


\begin{proof}
This is very similar to \cite[Lem.\,3.4]{KS}.
It is enough to show that
\be \label{Holder-continuity} \xi_1, \xi_2 \in \Ck_\infty,\ \ \|\xi_1 - \xi_2\|\le r\ \ \Longrightarrow\ \ \|f(\xi_1) - f(\xi_2)\|\le L \|\xi_1-\xi_2\|^\alpha,
\ee
for $\alpha =
\frac{\log\gamma}{\log|\lambda_{\max}|}$ and some $L
> 0$ (that is, $f$ is H\"{o}lder continuous on $\Ck_\infty$).

Let $\xi_1, \xi_2 \in \mathcal{C}_{\infty}$ satisfy  $||\xi_1 -
\xi_2|| = \delta \le r$. Then there exist $y_1, y_2 \in
\mathcal{C}$ such that $\phi^{-s} y_1 = \xi_1$ and $\phi^{-s} y_2
= \xi_2$ for some $s \in \Z_{\ge 0}$. We choose the smallest
$l \in \Z_{\ge 0}$ such that \[ \phi^s B_{\delta}(\phi^{-s} y_1)
\subset \phi^l B_r(\phi^{-l} y_1),\] which is equivalent to $\phi^{s-l} B_{\delta}(\phi^{-s} y_1) \subset B_r(\phi^{-l} y_1)$.
Since $\delta \le r$, we have $l \le s$ and hence $l$ is the smallest integer satisfying
$$
|\lam_{\max}|^{s-l} \delta \le r.
$$
Thus, \be \label{randdelta-inequality} |\lam_{\max}|^{s-l} >
\frac{r}{\delta} |\lam_{\max}|^{-1}. \ee Observe that $y_2 \in
\phi^s \ov{B}_\delta(\phi^{-s} y_1)\subset \phi^l
\ov{B}_r(\phi^{-l} y_1)$, hence $\phi^{-l} y_1$ and $\phi^{-l}
y_2$ are in the same or in the neighboring tiles of $\Tk$ by the
choice of $r$. It is shown in the course of the proof of
\cite[Lem.\,3.4]{KS} that we can write $y_1-y_2 = \sum_{h=1}^l
\phi^h w_h$,  where $w_h \in W$ for some finite set $W \subset
\phi^{-1} \Xi$ which depends only on the tiling $\Tk$ (a similar
statement, but without precise value of $l$ is proved in
\cite[Lemma\,4.5]{LS}). So
\begin{eqnarray*}
||f(\xi_1) - f(\xi_2)|| & = &|| f (\phi^{-s}
y_1) - f (\phi^{-s} y_2)|| = ||A^{-s} g (y_1) -
A^{-s} g (y_2)|| \\
& = & ||A^{-s} (g (y_1) - g (y_2))|| = ||A^{-s} g (y_1 - y_2)|| \\
& = & || A^{-s} g (\sum_{h=1}^l  \phi^h w_h)|| =  ||A^{-s} \sum_{h=1}^l A^h g(w_h)|| \\
& = & || \sum_{h=1}^l A^{h-s} g(w_h)|| \le \sum_{h=1}^l \gamma^{h-s}||g(w_h)|| \\
& \le &  L' \gamma^{l-s}\,,
\end{eqnarray*}
for some $L'> 0$ independent of $l$. Notice that
$\gamma^{l-s} = (|\lambda_{\max}|^{l -s})^{\alpha}$, where
$\alpha = \frac{\log\gamma}{\log|\lambda_{\max}|}$. Thus
\begin{eqnarray*}
||f(\xi_1) - f(\xi_2)|| &\le& L'
(|\lambda_{\max}|^{l-s})^{\alpha} \\
& < & L'\left( \frac{|\lambda_{\max}|}{r} \delta \right)^{\alpha} \ \ \ \mbox{by} \ (\ref{randdelta-inequality})\\
& = & L ||\xi_1 - \xi_2||^{\alpha} \ \ \ \mbox{where} \ L:= L'
\left( \frac{|\lambda_{\max}|}{r} \right)^{\alpha}\,,
\end{eqnarray*}
and (\ref{Holder-continuity}) is proved.
\end{proof}

\medskip

Since $\mathcal{C}_{\infty}$ is dense in $\R^d$, we can
extend $f$ to a map $f : \R^d \to H$ by continuity, and moreover,
$$
f\circ \phi = A \circ f
$$
(we denote the extended function by the same symbol $f$).
This proves part (i) of Prop.~\ref{prop-KenSol}.

\begin{lemma} \label{depend-only-on-tile-type}
Let $T$ and $T + z$ be tiles in $\Tk$. Then \be
\label{translational-on-same-tiles} f(\xi +z) = f(\xi) + g(z) \ \
\mbox{for any} \ \xi \in \supp(T).\ee
\end{lemma}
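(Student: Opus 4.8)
The plan is to show that the continuous function $D(\xi):= f(\xi+z)-f(\xi)-g(z)$ vanishes identically on $\supp(T)$. First I would record that $z$ is a return vector of the same-type kind: since $T$ and $T+z$ are $\Tk$-tiles and $T+z$ is a translate of $T$, they have the same type, so property (a) of control points gives $c(T+z)=c(T)+z$, whence $z\in\Ck-\Ck\subset\Dk$ and $g(z)$ is defined. As $\Dk$ is a $\Q[\phi]$-module and $g$ is additive on $\Dk$, we may freely write $g(\eta+z)=g(\eta)+g(z)$ for $\eta\in\Dk$. Because $f$ has just been extended to a continuous map on all of $\R^d$, the function $D$ is continuous; hence it suffices to prove $D(\xi)=0$ on a dense subset of $\supp(T)$.

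To produce such a dense set I would exploit the self-affine structure at every scale. Since $\om(\Tk)=\Tk$, the patch $\om^n(T)$ consists of $\Tk$-tiles, and the rescaled patch $\phi^{-n}\om^n(T)$ subdivides $\supp(T)$ into sub-tiles $\phi^{-n}S$, $S\in\om^n(T)$, whose diameters are bounded by $\|\phi^{-n}\|\max_i\diam(A_i)\to 0$ because $\phi$ is expanding. Each such sub-tile contains the point $\xi:=\phi^{-n}c(S)\in\phi^{-n}\Ck\subset\Ck_\infty$, so as $n\to\infty$ these points become dense in $\supp(T)$. For fixed such $\xi$, the translated sub-tile $\phi^{-n}S+z$ lies inside $T+z$: indeed $\om^n(T+z)=\om^n(T)+\phi^n z$, so $S+\phi^n z\in\om^n(T+z)\subset\Tk$ is a $\Tk$-tile of the same type as $S$, and by property (a) its control point is $c(S)+\phi^n z$. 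Therefore $\xi+z=\phi^{-n}(c(S)+\phi^n z)=\phi^{-n}c(S+\phi^n z)\in\phi^{-n}\Ck\subset\Ck_\infty$.

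Now both $\xi$ and $\xi+z$ lie in $\Ck_\infty$, where $f$ agrees with $g$ by definition $f=g|_{\Ck_\infty}$, and both lie in $\Dk$, where $g$ is additive. Hence $f(\xi+z)=g(\xi+z)=g(\xi)+g(z)=f(\xi)+g(z)$, i.e. $D(\xi)=0$ at all these control points. Letting $n\to\infty$ and invoking their density in $\supp(T)$ together with the continuity of $D$ gives $D\equiv 0$ on $\supp(T)$, which is exactly (\ref{translational-on-same-tiles}).

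The hard part is not the final computation but the bookkeeping in the second step: verifying that the control point of the translated sub-tile $\phi^{-n}S+z$ is again of the form $\phi^{-n}c(S')$ for a genuine $\Tk$-tile $S'$, so that it stays in $\Ck_\infty$. This rests on the compatibility $\om^n(T+z)=\om^n(T)+\phi^n z$ of the substitution with translation and dilation, combined with property (a); once this is in place, the density of the relevant control points follows purely from the expansiveness of $\phi$, and continuity of $f$ closes the argument.
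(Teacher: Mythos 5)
Your proposal is correct and follows essentially the same route as the paper: both prove the identity on the dense set of points $\phi^{-n}c(S)$, $S\in\om^n(T)$, using the key compatibility $S+\phi^n z\in\om^n(T+z)\subset\Tk$ together with property (a) of control points, and then conclude by continuity of the extended $f$. The only cosmetic difference is that you invoke the additivity of $g$ on $\Ck_\infty\subset\Dk$ directly, whereas the paper routes the computation through $f\circ\phi^{-k}=A^{-k}\circ g$; both are valid.
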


\begin{proof} It is enough to show that (\ref{translational-on-same-tiles})
holds for a dense subset of $\supp(T)$, namely, $\Ck_\infty \cap \supp(T)$. Suppose that $\xi =
\phi^{-k} c(S)$, where $S \in \omega^k (T)$. Note that \be
\label{in-a-supertile} S + \phi^k z \in \omega^k (T + z) \subset
\Tk. \ee Then
\begin{eqnarray*}
f(\xi + z) &=& f(\phi^{-k} c(S) + z) =
f(\phi^{-k}(c(S) + \phi^k  z)) \\
&=& f\phi^{-k}(c(S + \phi^k z)) = A^{-k} g (c(S + \phi^k  z)) \\
&=& A^{-k}( g (c(S))+ g(\phi^k z)) = A^{-k} g (c(S)) + A^{-k}g \phi^k(z) \\
&=& f(\phi^{-k} c(S)) + g(z) = f(\xi) + g(z) \,.
\end{eqnarray*}
\end{proof}

Recall that $A$ is diagonalizable over $\C$. For $\theta \in \Spec(A)$ let $p_\theta: H\to H$ be the canonical projection onto the real $A$-invariant subspace for
$A$ corresponding to $\theta$, so that we have
$$
I_H = \sum_{\theta \in \Spec(A)} p_\theta.
$$
Define
\[f_\theta = p_{\theta} \circ f, \ \ \ \theta\in \Spec(A).\]
Note that \be \label{sigmaEqualtoSmallSigmas} f= \sum_{\theta \in \Spec(A)} f_\theta.\ee
Suppose that $\lam \in Spec(\phi)$ satisfies
$|\lam| = \gamma$.

\begin{lemma} \label{piij-Lipschitz-constant}
For $\theta \in Spec(A)$ and $a \in \R^d$,
\[ \left\{ \begin{array}{ll}
         f_\theta|_{a + G_{\lambda}} \ \mbox{is Lipschitz} \ &
         \mbox{if $ |\theta| = |\lambda|$}\,; \\
         f_\theta|_{a + G_{\lambda}} \ \mbox{is constant} \ &
         \mbox{if $ |\theta| > |\lambda|$} \,.
         \end{array}
  \right.
 \]
 Moreover, the Lipschitz constant is uniform in $a\in \R^d$ (equal to $C$ from (\ref{Lipschitz-linear-on-controlPointSet})).
\end{lemma}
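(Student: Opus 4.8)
The plan is to exploit the scaling relation $f\circ\phi = A\circ f$ one spectral component at a time, push an arbitrary pair of points in the slice $a+G_\lam$ out by $\phi^n$ to a scale where the control-point Lipschitz estimate \eqref{Lipschitz-linear-on-controlPointSet} becomes available, and then pull the resulting bound back by $A^{-n}$. First I would record the key commutation: since $A$ is diagonalizable over $\C$, the spectral projection $p_\theta$ commutes with $A$, so $f_\theta\circ\phi = p_\theta A f = A p_\theta f = A\circ f_\theta$, and iterating gives $f_\theta = A^{-n}\circ f_\theta\circ\phi^n$ for every $n\ge 0$. I would work with the norms adapted to $\phi$ and $A$ as in \eqref{def-norm}, so that $\|p_\theta\|\le 1$, so that $\|A^{-n}w\| = |\theta|^{-n}\|w\|$ for $w$ in the range of $p_\theta$, and so that $\|\phi^n v\| = \gamma^n\|v\|$ for $v\in G_\lam$ (recall $|\lam|=\gamma$). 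Using these adapted norms is what makes the eventual constant exactly $C$ rather than a dimensional multiple of it.

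For the core estimate, fix $\xi_1,\xi_2\in a+G_\lam$, set $v=\xi_1-\xi_2\in G_\lam$ and $\delta=\|v\|$. For each $n$ the points $\phi^n\xi_1,\phi^n\xi_2$ differ by $\phi^n v\in G_\lam$, hence lie at distance $\gamma^n\delta$. Since $\Ck$ is relatively dense, I choose control points $\zeta_1^{(n)},\zeta_2^{(n)}\in\Ck$ with $\|\phi^n\xi_i-\zeta_i^{(n)}\|\le\rho_0$, where $\rho_0$ is the covering radius of $\Ck$. By the uniform continuity of $f$ established above, $\|f(\phi^n\xi_i)-f(\zeta_i^{(n)})\|\le M_0$ for a constant $M_0=M_0(\rho_0)$ independent of $n$, and by \eqref{Lipschitz-linear-on-controlPointSet} applied to the genuine control points $\zeta_i^{(n)}$,
\[
\|f(\phi^n\xi_1)-f(\phi^n\xi_2)\| \le C\|\zeta_1^{(n)}-\zeta_2^{(n)}\| + 2M_0 \le C\gamma^n\delta + (2C\rho_0 + 2M_0).
\]

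Now applying $f_\theta = A^{-n}\circ f_\theta\circ\phi^n$ together with the norm estimates $\|p_\theta\|\le 1$ and $\|A^{-n}|_{p_\theta H}\| = |\theta|^{-n}$ gives, for every $n$,
\[
\|f_\theta(\xi_1)-f_\theta(\xi_2)\| \le |\theta|^{-n}\bigl(C\gamma^n\delta + 2C\rho_0 + 2M_0\bigr) = C\delta\,(\gamma/|\theta|)^n + |\theta|^{-n}(2C\rho_0 + 2M_0).
\]
Letting $n\to\infty$ settles both cases simultaneously: if $|\theta|=\gamma$ the first term is exactly $C\delta$ while the second tends to $0$, yielding $\|f_\theta(\xi_1)-f_\theta(\xi_2)\|\le C\|\xi_1-\xi_2\|$; if $|\theta|>\gamma$ both terms tend to $0$, forcing $f_\theta(\xi_1)=f_\theta(\xi_2)$, i.e.\ constancy. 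Since none of the bounds involves $a$, the Lipschitz constant is uniform in $a$. The main obstacle is the control-point approximation inside the blown-up slice: I must secure a distance bound $\rho_0$ that is uniform in $n$ (this is where relative denseness of $\Ck$ and uniform continuity of $f$ are used) and then arrange that the approximation error, after multiplication by $|\theta|^{-n}$, disappears in the limit \emph{without} degrading the sharp constant $C$ in the borderline case $|\theta|=\gamma$.
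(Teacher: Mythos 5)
Your proposal is correct and follows essentially the same route as the paper: blow up the pair of points in the slice by $\phi^n$, approximate by nearby control points using relative denseness, control the error via uniform continuity of $f$, apply the Lipschitz bound \eqref{Lipschitz-linear-on-controlPointSet} on $\Ck$, and pull back through $A^{-n}$ using the spectral projection and the adapted norm so that the factor $(|\lam|/|\theta|)^n$ decides between the Lipschitz and constant cases. The only cosmetic difference is that you commute $p_\theta$ with $A$ at the outset to write $f_\theta=A^{-n}\circ f_\theta\circ\phi^n$, whereas the paper keeps $f_\theta=p_\theta\circ f$ and commutes at the end; the estimates are identical.
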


\begin{proof} Let $\xi_1, \xi_2 \in a + G_{\lambda}$ for some $a \in \R^d$.
For any $l \in \Z_+$, using the norm in $H$ analogous to that in (\ref{def-norm}), so that $\|A \circ p_\theta(\bx) \| = |\theta|\,\|p_\theta(\bx)\|$, we obtain
\begin{eqnarray} \label{pi-ij-inequality-on-control-points}
||f_\theta(\xi_1) - f_\theta(\xi_2)|| &=&
||(p_{\theta} \circ f) \phi^{-l} (\phi^l \xi_1) -  (p_{\theta} \circ f) \phi^{-l} (\phi^l \xi_2)|| \nonumber\\
& = & ||p_{\theta}(A^{-l} f(\phi^l \xi_1) - A^{-l} f(\phi^l \xi_2))||\nonumber \\
& = & |\theta|^{-l}|| p_{\theta} (f(\phi^l \xi_1) -
f(\phi^l \xi_2))|| \nonumber \\
 & \le & |\theta|^{-l}||
f(\phi^l \xi_1) - f(\phi^l \xi_2)||.
\end{eqnarray}
Note that there exist $y_1, y_2 \in \mathcal{C}$ such that
\[||\phi^l \xi_1 - y_1|| <
\delta_1 \ \ \mbox{and} \ ||\phi^l \xi_2 - y_2|| < \delta_1 \]
for some fixed $\delta_1 >0$. Since $f$ is uniformly continuous,
\[ ||f(\phi^l \xi_1) - f(y_1)|| <
\delta_2 \ \  \mbox{and} \ ||f(\phi^l \xi_2) -
f(y_2)|| < \delta_2 \] for some fixed $\delta_2 > 0$.
By the assumption on $f$,  we have $||f(y_1) - f(y_2)|| \le C ||y_1 -
y_2||$. Thus
\begin{eqnarray} \label{pi-ij-controlPoints-Lipschitz}
||f(\phi^l \xi_1) - f(\phi^l \xi_2)|| & < &
||f(y_1)- f(y_2)|| + 2 \delta_2 \nonumber \\
& \le & C ||y_1 - y_2|| + 2 \delta_2 \nonumber \\
& < & C (||\phi^l \xi_1 - \phi^l \xi_2|| + 2\delta_1) + 2
\delta_2.
\end{eqnarray}
Applying (\ref{pi-ij-controlPoints-Lipschitz}) to
(\ref{pi-ij-inequality-on-control-points}), we obtain that for any
$l \in \Z_{+}$
\begin{eqnarray*}
\lefteqn{||f_\theta(\xi_1) - f_\theta(\xi_2)||} \\
& < & |\theta|^{-l}
\left( C ||\phi^l \xi_1 - \phi^l \xi_2|| + 2C \delta_1 + 2 \delta_2 \right) \\
& = & \frac{C |\lambda|^l}{|\theta|^l} ||\xi_1 -
\xi_2|| + \frac{1}{|\theta|^l}(2C \delta_1 + 2\delta_2)\,.
\end{eqnarray*}
Thus if $|\theta| = |\lambda|$, we have that $f_\theta|_{a+ G_{\lambda}}$ is
Lipschitz with a uniform  Lipschitz constant $C$, and if $|\theta|
> |\lambda|$, we have that $f_\theta|_{a+ G_{\lambda}}$ is
constant.
\end{proof}

\begin{remark} \label{subspace-generated-by-eigensp}
{\em First note that $|\lam| = \gamma \le {\mmin}\{|\theta| :
\theta \in Spec(A)\}$ by (\ref{eq-newk1}). The last lemma implies that for any $\xi \in \R^d$ and $w \in
G_{\lam}$, the vector $f(\xi + w) - f(\xi)$ is in the subspace generated by
eigenspaces of $A$ corresponding to eigenvalues $\theta$ for
which $|\theta| = |\lam|$. We make use of this observation to
show (\ref{phi-phi}) in Lemma~\ref{pi-i-on-E-lamimin} below.}
\end{remark}

From Lemma~\ref{piij-Lipschitz-constant} and
(\ref{sigmaEqualtoSmallSigmas}), we get the following corollary.

\begin{cor} \label{pi-i-Lipschitz}
$f|_{a + G_{\lambda}}$ is Lipschitz for any $a \in \R^d$.
\end{cor}

We now prove furthermore that $f$ is affine linear on $
G_{\lambda}$ slices of $\R^d$.

\begin{lemma} \label{pi-i-on-E-lamimin}
$f|_{a + G_{\lambda}}$ is affine linear for any $a \in \R^d$.
\end{lemma}

\begin{proof} This is analogous to \cite[Lem.\,3.7]{KS}, but in some places the presentation is sketchy, so we provide complete details for the
readers' convenience.

Since $f|_{a+ G_{\lambda}}$ is Lipschitz for any $a \in \R^d$,
it is a.e.\ differentiable by Rademacher's theorem, and hence $f$ is differentiable in the direction of $G_{\lambda}$ a.e. in $
\R^d$, by Fubini's theorem. Let \[ D(z)u = \lim_{t \to 0}
\frac{f(z + tu) - f(z)}{t} \ \ \ \mbox{for} \ u \in G_{\lambda} \
\mbox{and} \ z \in \R^d .\] The limit exists a.e.\ $z \in \R^d$ and
for all $u \in G_{\lam}$, and $D(z)u$ is a linear transformation
in $u$ (from $G_{\lambda}$ to $H$). Moreover $D(z)$ is a
measurable function of $z$, being a limit of continuous functions. By the definition of total derivative,
\[ \lim_{n \to \infty} F_n(z)=0\ \ \
\mbox{for a.e.} \ z \in \R^d,\ \ \mbox{where}\ \
\ F_{n}(z) = \sup_{\stackrel{u
\in G_{\lambda}}{0 < \|u\| < \frac{1}{n}}} \frac{\|f(z + u) - f(z)
- D(z)u \|}{\|u\|}\,.\]
By Egorov's theorem, $\{F_n\}$
converges uniformly on a set of positive measure. This implies that
there exists a sequence of positive integers $N_l \uparrow
\infty$ such that
\be \label{define-Omega}
\Omega := \left\{\xi \in \R^d  :\ \forall\,l\ge 1,\ \forall\, u \in B_{1/N_l}\cap G_l,\
\frac{\|f(\xi + u) - f(\xi) - D(\xi)u\|}{\|u\|} < \frac{1}{l} \right\}
\ee has positive Lebesgue measure.

Our goal is proving that $\Omega$ has full Lebesgue measure. The
argument is based on a kind of ``ergodicity''. First observe from
Lemma\,\ref{depend-only-on-tile-type} that $\Omega$ is ``piecewise
translation-invariant'' in the following sense: \be
\label{eq-trans} (T\in \Tk,\ T+x\in \Tk,\ \xi\in \Omega\cap
\supp(T))\ \Longrightarrow\ \xi + x \in \Omega. \ee Second,
$\Omega$ is forward invariant under the expansion map $\phi$.
Indeed, let $\xi \in \Omega$ and $u \in \phi(B_{1/N_l}) \cap
G_\lam$. Then
\begin{eqnarray}
\lefteqn{\|f(\phi\xi +u) - f(\phi \xi) - A D(\xi) \phi^{-1} u\|} \nonumber \\ & = & \|A(f(\xi+\phi^{-1}u) - f(\xi) - D(\xi) \phi^{-1}u)\|  \nonumber \\
                                                       & = & |\lam|\|f(\xi + \phi^{-1}u) - f(\xi) - D(\xi) \phi^{-1}u)\|  \ \ \
                             \ \ \ \   \mbox{by Remark~\ref{subspace-generated-by-eigensp}} \nonumber \\
                               & < & |\lam| \frac{\|\phi^{-1}u\|}{l} = \frac{|\lam|}{|\lam|} \frac{\|u\|}{l} =
                                \frac{\|u\|}{l}\,. \label{phi-phi}
 \end{eqnarray}
This implies that $D(\phi \xi)$  exists and equals $AD(\xi) \phi^{-1}$, and since $\phi(B_{1/N_l}) \supset B_{|\lam|/N_l} \supset B_{1/N_l}$, we also obtain that
$$
\phi(\Omega) \subset \Omega.
$$

We will need a version of the Lebesgue-Vitali density theorem where the differentiation basis is the collection of sets of the form $\phi^{-l}B_1,\ l\ge 0$, and
their translates. It is well-known that such sets form a density basis, see \cite[pp.\,8--13]{Stein}. Let $y$ be a density point of $\Omega$ with respect to this density
basis. Then
\[m(\Omega \cap \phi^{-l}B_1 (\phi^l y)) \ge (1 - \eps_l) m(\phi^{-l}B_1) \ \ \ \mbox{for some} \ \eps_l \to 0, \]
where $m$ denotes the Lebesgue measure.
Note that \be m(\Omega \cap B_1 (\phi^l y)) & \ge & m(\phi^l
\Omega \cap B_1 (\phi^l
y)) \nonumber \\
& = & |\det \phi|^l m(\Omega \cap \phi^{-l} B_1 (\phi^l y)) \nonumber \\
& \ge & |\det \phi|^l (1 - \eps_l) m(\phi^{-l}B_1)\nonumber \\
& = & (1 - \eps_l) m(B_1). \nonumber \ee By FLC and repetitivity, there exists $R>0$ such that $B_R$ contains equivalence classes of all the patches $[B_1(\phi^l y)]^\Tk$.
Then for any $l \in \Z_+$, there exists $y_l \in B_R$ such that
$$
[B_1(y_l)]^{\Tk} = [B_1(\phi^l y)]^\Tk + (y_l - \phi^l y).
$$
By (\ref{eq-trans}), we have
$$
m(\Omega \cap B_1(y_l))  \ge (1 - \eps_l) m(B_1),
$$
hence $m(\Omega \cap B_1(y')) = m(B_1)$ for any limit point $y'$ of the sequence $\{y_l\}$. We have shown that $\Omega$ is a set of full measure in $B_1(y')$.
But then it is also a set of full measure in $\phi^k B_1(y')$ for $k \ge 1$. By the repetivity of $\Tk$, using (\ref{eq-trans}), we obtain that
$\Omega$ has full measure in $\R^d$.

Choose $n_l \in \Z_+$ so that
$|\lam|^{n_l} > N_l$. Repeating the argument of (\ref{phi-phi}) we obtain
\be \xi \in \phi^{n_l} \Omega &\Rightarrow & ||f(\xi + v) - f(\xi) - D(\xi) v|| \le \frac{||v||}{l} \nonumber \\
&& \ \ \ \mbox{for all} \ v \in \phi^{n_l} \left( B_{1/{N_l}} \cap
G_{\lam} \right) \supset B_{|\lam|^{n_l}/N_l} \cap G_\lam \supset
B_1 \cap G_{\lam} \, . \nonumber \ee Thus $f(\xi + v) = f(\xi) +
D(\xi) v$ for any $\xi \in \bigcap_{l=1}^{\infty}
\phi^{n_l}\Omega$ and $v \in B_1 \cap G_{\lam}$. Note that
$\bigcap_{l=1}^{\infty} \phi^{n_l} \Omega$ has full measure, hence
it is dense in $\R^d$. So for any $\xi \in \R^d$, we can find a
sequence $\{\xi_j\} \subset \bigcap_{l=1}^{\infty} \phi^{n_l}
\Omega$ such that $\xi_j \to \xi$. Since $f|_{\xi_j + G_\lam}$ is
Lipschitz with a uniform Lipschitz constant $C$, the derivatives
$D(\xi_j)$ are uniformly bounded, and we can assume that
$D(\xi_j)$ converges to some linear transformation $D_\xi$ by
passing to a subsequence. Then we can let $j\to \infty$ to obtain
$$
f(\xi + v) = f(\xi) + D_\xi v\ \ \ \mbox{for all}\ v\in B_1 \cap G_\lam.
$$
Since this holds for every point in $\R^d$, we obtain that $D_\xi = D_{\xi'}$ for any $\xi, \xi' \in \R^d$ with $\xi - \xi' \in G_{\lam}$, and
$f|_{\xi+G_{\lam}}$ is affine linear for any $\xi \in \R^d$.
\end{proof}

This concludes the proof of Proposition~\ref{prop-KenSol}.

\medskip

Recall (\ref{def-Geps}) that $G' = \bigcap_{\eps>0} G_\eps$ and $G_\eps = \Span_\R(B_\eps \cap (I-P_\lam)\Xi)$.

\begin{lemma} \label{lem-G'}
There exists $\eps>0$ such that $G' = G_{\eps'}$ for every $0 < \eps'\le \eps$, and moreover,
$$
G' = G'':=\Span_\R(B_{\eps'} \cap (I-P_\lam)(\Ck_1- \Ck_1))\ \ \mbox{for all}\ 0 < \eps'\le \eps.
$$
\end{lemma}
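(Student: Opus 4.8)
The plan is to separate the soft ``stabilization'' assertion from the substantive claim that a single tile type already generates $G'$, and to reduce the latter to one inflation inclusion coming from primitivity.

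\emph{Stabilization and reduction.} First I would record that the family $\{G_\eps\}$ is nested: if $0<\eps_1\le\eps_2$ then $B_{\eps_1}\subset B_{\eps_2}$, so $G_{\eps_1}\subseteq G_{\eps_2}$. Hence $\dim G_\eps$ is a nondecreasing, integer-valued function of $\eps$, so it is constant for $\eps$ below some threshold $\eps_0$; since nested subspaces of equal dimension coincide, $G_{\eps'}=G_{\eps_0}$ for all $0<\eps'\le\eps_0$, and this common value equals $G'=\bigcap_{\eps}G_\eps$. The same argument applied to the single-type spaces $G^{(i)}_\eps:=\Span_\R\bigl(B_\eps\cap(I-P_\lam)(\Ck_i-\Ck_i)\bigr)$ produces stabilized subspaces $V_i$, and by definition $G''=V_1$. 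Because $\Xi=\bigcup_{i=1}^{\kappa}(\Ck_i-\Ck_i)$ is a \emph{finite} union, $G_\eps=\sum_i G^{(i)}_\eps$, so for small $\eps$ we get $G'=\sum_{i=1}^{\kappa}V_i$. Since $\Ck_1-\Ck_1\subseteq\Xi$ gives $G''=V_1\subseteq G'$ at once, everything reduces to proving $V_i=V_1$ for every $i$.

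\emph{The key inflation inclusion.} The heart of the matter is the inclusion $\phi^N(\Ck_i-\Ck_i)\subseteq\Ck_j-\Ck_j$ for a single $N$ and all $i,j$. By primitivity I would choose $N$ so that $\om^N(T_i)$ contains a tile of type $j$ for all $i,j$ simultaneously. Fixing a reference type-$i$ tile $S_0$, every type-$i$ tile is $S=S_0+t$ with $t=c(S)-c(S_0)$, and $\om^N(S)=\phi^N t+\om^N(S_0)$; transporting a chosen type-$j$ child $R_j(S_0)\in\om^N(S_0)$ by this translation gives a type-$j$ tile $R_j(S)=\phi^N t+R_j(S_0)\in\om^N(S)\subseteq\Tk$. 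As $R_j(S)$ and $R_j(S')$ are $\Tk$-tiles of the same type $j$, property (a) of control points yields $c(R_j(S'))-c(R_j(S))=\phi^N(c(S')-c(S))$; letting $S,S'$ range over all type-$i$ tiles gives the claimed inclusion.

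\emph{Transfer and conclusion.} Projecting by $I-P_\lam$ (which commutes with $\phi$) and using $\phi^N B_\eps\subseteq B_{\|\phi^N\|\eps}$ gives $\phi^N G^{(i)}_\eps\subseteq G^{(j)}_{\|\phi^N\|\eps}$; letting $\eps\downarrow0$, so that $\|\phi^N\|\eps$ also falls below the type-$j$ stabilization threshold, I obtain $\phi^N V_i\subseteq V_j$ for all $i,j$. Taking $i=j$ gives $\phi^N V_j\subseteq V_j$, hence $\phi^N V_j=V_j$ since $\phi^N$ is an isomorphism and the spaces are finite-dimensional; then $V_i=\phi^{-N}(\phi^N V_i)\subseteq\phi^{-N}V_j=V_j$, and by symmetry $V_i=V_j$ for all $i,j$. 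Therefore $G'=\sum_i V_i=V_1=G''$, as claimed. The main obstacle is the inflation inclusion of the second paragraph: one must carry out the control-point bookkeeping for super-tile children and, crucially, extract a \emph{single} $N$ valid for every ordered pair $(i,j)$ from primitivity. A secondary subtlety is that $\phi^N$ is expanding, so the transfer $\phi^N G^{(i)}_\eps\subseteq G^{(j)}_{\|\phi^N\|\eps}$ enlarges the radius; this is harmless only because I pass to the limit $\eps\downarrow0$, where both single-type families have already stabilized.
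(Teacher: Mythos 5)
Your proposal is correct and follows essentially the same route as the paper: the soft part is the stabilization of the nested finite-dimensional subspaces $G_\eps$, and the substantive part is an inflation inclusion obtained from primitivity together with control-point property (a), transferred through the spans using that $\phi$ is an isomorphism. The paper does this slightly more directly, choosing $k$ with $\phi^k\Xi\subset\Ck_1-\Ck_1$ and computing $G'=\phi^k G'=\phi^k G_{\eps'/\|\phi\|^k}\subset G''$, whereas you detour through the per-type subspaces $V_i$ and show they all coincide; the extra bookkeeping is sound but not needed.
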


\begin{proof}
Observe that $G_{\eps'} \subset G_\eps$ for $\eps' < \eps$. These are finite-dimensional subspaces over $\R$, hence they must stabilize, which yields the first claim.

To prove the second claim, we just need to show $G' \subset G''$ since $\Ck_1-\Ck_1 \subset \Xi$.
There exists $k \in \Z_+$ such that $\phi^k\Xi \subset \Ck_1-\Ck_1$ (just choose $k$ such that
$\om^k(T_1)$ contains tiles of all types). Then
$$
G' = \phi^k G' = \phi^k G_{\eps'/\|\phi\|^k} \subset \Span_\R (B_{\eps'} \cap (I-P_\lam) \phi^k \Xi) \subset G'',
$$
as desired.
\end{proof}

%

\medskip

\noindent {\em Proof of Proposition~\ref{prop-KenSol2}.} This is similar to \cite[Lem.\,3.8]{KS}, but again, there are some differences, and we provide more details here.

For any $z \in \R^d$ and $\xi \in G$, we have $f(z+ P_{\lam}\xi) = f(z)
+ E(z)P_{\lam} \xi$ by Lemma \ref{pi-i-on-E-lamimin}. Since $f$ is
uniformly continuous, $E(z)$ is independent of the choice of $z$.
So for $\xi \in G$,
\begin{eqnarray} f(a + \xi) &=&
f(a + (I - P_{\lam})\xi + P_{\lam} \xi) \nonumber \\
&=& f(a + (I - P_{\lam})\xi) + E \, P_{\lam} \xi \label{pumba}
\,,
\end{eqnarray} for some fixed linear transformation $E:\,G_\lam\to H$. Thus we only need to prove that
$f$ is affine linear on all $G'$-slices.
Let $T$ be a tile of type 1 in $\Tk$. For any $a \in
(\supp(T))^{\circ}$, choose $r
> 0$ such that $B_r(a) \subset (\supp(T))^{\circ}$. We will show that
\be \label{Jensen-functional-equation} f\left(\frac{\zeta_1 +
\zeta_2}{2}\right) = \frac{f(\zeta_1) + f(\zeta_2)}{2} \ \ \
\mbox{for all} \ \zeta_1, \zeta_2 \in B_r(a) \cap (a + G')\,. \ee
In other words, $f|_{a+G'}$ satisfies the so-called Jensen
functional equation, and since $f$ is continuous, this will imply
that $f|_{a+G'}$ is locally affine linear, see
\cite[2.1.4]{Aczel}. By expanding (using $\phi$-invariance) and
translating (using (\ref{eq-trans})), we will then conclude that
$f|_{a+G'}$ is affine linear for all $a \in \R^d$.

Now we show (\ref{Jensen-functional-equation}). By Lemma~\ref{lem-G'}, for any
$\eps' > 0$ with $\eps' \le \eps$, there exists a basis $\{y_1,
\cdots, y_s \}$ of $G'$ such that for each $1 \le j \le s$, $y_j
\in B_{\eps'}$ and $y_j = (I - P_{\lam}) z_j$ for some $z_j \in
\mathcal{C}_1 - \mathcal{C}_1$. Let
$\zeta_1, \zeta_2 \in B_r(a) \cap (a + G')$ and fix small $\eps'
> 0$ such that $\eps' \le \eps$ and
\[ \eps' < \frac{(r-\mbox{max}\{||\zeta_1||, ||\zeta_2||\})}{4s}.\]  We consider the lattice
generated by the $y_j$'s in $G'$. It defines a grid with
grid cells of diameter less than $s \max_j||y_j|| \le s \eps'$.
Thus there exist $b_j \in \Z$, $1 \le j \le s$, such that
\[||\sum_{j=1}^s b_j y_j - \frac{(\zeta_2 - \zeta_1)}{2}|| < s
\eps'.\] Let $\tilde{\zeta} : = \zeta_1 + \sum_{j=1}^s b_j y_j$, so that $$\|\frac{\zeta_1+\zeta_2}{2}-\tilde{\zeta}\|< s\eps'.$$
Translate our grid in such a way that $\zeta_1$ is the
origin and consider a `grid geodesic' connecting $\zeta_1$ to
$\tilde{\zeta} = \zeta_1 + \sum_{j=1}^s b_j y_j $ in the $s
\eps'$-tube around the line segment $[\zeta_1, \tilde{\zeta}]$. By
the choice of $\eps'$, this `grid geodesic' is contained in
$B_r(a) \cap (a+ G')$. It is a sequence of points $\xi_1 =
\zeta_1, \xi_2, \cdots, \xi_L = \tilde{\zeta}$, where $\xi_{l+1} -
\xi_l = y_{t(l)}$, $y_{t(l)} = (I - P_{\lam})z_{t(l)}$, $z_{t(l)}
\in \mathcal{C}_1 - \mathcal{C}_1$, and $L = \sum_{j=1}^s |b_j|$.
For each $z_{t(l)} \in \mathcal{C}_1 - \mathcal{C}_1$, there
exists a tile $S$ of type 1 such that $S + z_{t(l)} \in \Tk$. By
Lemma~\ref{depend-only-on-tile-type} we have
$f(\eta + z_{t(l)}) = f(\eta) + g (z_{t(l)})$ for any $\eta \in
\supp(S)$. In view of (\ref{pumba}),
\be \label{linearity-on-tile} f(\eta +y_{t(l)}) =
f(\eta) + g(z_{t(l)}) - E \, P_{\lam} (z_{t(l)}) \,. \ee
Since $\xi_l, \xi_{l+1}\in \supp(T)$ which is of type 1, using
Lemma~\ref{depend-only-on-tile-type} again we obtain
\begin{eqnarray*}
f(\xi_{l+1}) - f(\xi_{l}) & = & f(\xi_{l} + y_{t(l)}) - f(\xi_{l}) \\
                          & = & f(\eta +y_{t(l)}) - f(\eta) = g(z_{t(l)}) -
              E \, P_{\lam}(z_{t(l)}).
\end{eqnarray*}
Then \be \label{puma1} f(\tilde{\zeta}) - f(\zeta_1) =
\sum_{l=1}^L (g(z_{t(l)}) - E \, P_{\lam}(z_{t(l)})).\ee Note that
$\zeta_2 - (\zeta_2 + \zeta_1 - \tilde{\zeta}) = \sum_{j=1}^s b_j
y_j$. The point $\zeta_2 + \zeta_1 - \tilde{\zeta}$ is symmetric
to $\tilde{\zeta}$ with respect to $\frac{\zeta_1+\zeta_2}{2}$, so
it is also within $s\eps'$ of $\frac{\zeta_1+\zeta_2}{2}$. The
grid geodesic which connected $\zeta_1$ to $\tilde{\zeta}$,
translated by $\zeta_2-\tilde{\zeta}$, connects $\zeta_2 + \zeta_1
- \tilde{\zeta}$ to $\zeta_2$ inside $B_r(a) \cap (a+ G')$. Thus,
we obtain, repeating the argument above, that \be \label{puma2}
f(\zeta_2) - f(\zeta_2 + \zeta_1 - \tilde{\zeta}) = \sum_{l=1}^L
(g(z_{t(l)}) - E \, P_{\lam}(z_{t(l)})).\ee Since $||\tilde{\zeta}
- \frac{\zeta_2 + \zeta_1}{2}|| < s \eps'$, by uniform continuity
\[\mbox{max}\{||f(\tilde{\zeta}) - f(\frac{\zeta_2 + \zeta_1}{2})||,
||f(\zeta_2 + \zeta_1 - \tilde{\zeta}) -
f(\frac{\zeta_2 + \zeta_1}{2})||\} < \delta(\eps')\]
where $\delta(\eps')\to 0$ as $\eps'\to 0$. Combining this with (\ref{puma1}) and (\ref{puma2}) yields
(\ref{Jensen-functional-equation}), as desired.
\qed

\medskip

This completes the proof of Theorem~\ref{isomorphicImageOfC-contained}.


\begin{thebibliography}{99}

\bibitem{Aczel} J. Acz\'el, {\em Lectures on Functional Equations and Their Applications}.
Mathematics in Science and Engineering, Vol. 19, Academic Press, New York-London 1966.

\bibitem{AL} S. Akiyama and J.-Y. Lee, An efficient algorithm for determining whether
substitution tiling has pure point spectrum, Preprint.

\bibitem{AP} J. Andersen and I. Putnam,
Topological invariants for substitution tilings and their associated
$C^*$-algebras,
{\em Ergodic Th.\ Dynam.\ Sys.} {\bf 18} (1998), no.\ 3, 509--537.

\bibitem{BL}
M. Baake and D. Lenz, Dynamical systems on translation bounded
measures : pure point dynamical and diffraction spectra, {\em
Ergodic Theory Dynam. Systems} {\bf 24} (2004), 1867--1893.

\bibitem{BLM} M. Baake, D. Lenz, and R. V. Moody,
Characterization of model sets by dynamical systems, {\em Ergodic
Theory Dynam. Systems} {\bf 27} (2007), no. 2, 341--382.

\bibitem{BG} R. Benedetti and J.-M. Gambaudo,
On the dynamics of $\mathbb G$-solenoids. Applications to Delone sets,
{\em  Ergodic Theory Dynam.\ Sys.} {\bf 23} (2003), no.\ 3, 673--691.

\bibitem{CS} A. Clark and L. Sadun, When shape matters: deformations of tiling spaces, {\em  Ergodic Theory Dynam.\ Systems}  {\bf 26}  (2006),  no.\ 1, 69--86.

\bibitem{Danzer} L. Danzer,
Inflation species of planar tilings which are not of locally finite complexity,
{\em Proc.\ Steklov Inst.\ Math.} {\bf 239} (2002), no.\ 4, 108--116.

\bibitem{Dw} S. Dworkin, Spectral theory and $X$-ray diffraction,
{\em J.\ Math.\ Phys.} {\bf 34} (1993), 2965--2967.

\bibitem{natalie} N. P. Frank, A primer of substitution tilings of the Euclidean plane, {\em  Expo. Math.} {\bf  26}  (2008),  no.\ 4,
295--326.

\bibitem{FraRob} N. P. Frank and E. A. Robinson, Jr.,
Generalized $\beta$-expansions, substitution tilings, and local finiteness,
{\em Trans.\ Amer.\ Math.\ Soc.}  {\bf 360}  (2008),  no.\ 3, 1163--1177.

\bibitem{Gamb} J.-M. Gambaudo, A note on tilings and translation surfaces, {\em  Ergodic Theory Dynam.\ Systems} {\bf 26}  (2006),  no.\ 1, 179--188.

\bibitem{Gouere}  J.-B. Gou\'{e}r\'{e},
Quasicrystals and almost periodicity.  {\em Comm.\ Math.\ Phys.}  {\bf 255}  (2005),  no.\ 3, 655--681.

\bibitem{HS} M. Hirsch and S. Smale, {\em Differential Equations, Dynamical Systems, and Linear Algebra}, Academic Press, 1974.

\bibitem{HRS}
C. Holton, C. Radin, and L. Sadun, Conjugacies for tiling dynamical systems,
{\em Comm.\ Math.\ Phys.} {\bf 254} (2005), no.\ 2, 343--359.

\bibitem{Kel} J. Kellendonk, Pattern equivariant functions, deformations and equivalence of tiling spaces, {\em  Ergodic Theory Dynam.
\ Systems} {\bf   28}  (2008),  no.\ 4, 1153--1176.

\bibitem{Ken} R. Kenyon, Inflationary similarity-tilings,
{\em Comment.\ Math.\ Helv.} {\bf 69} (1994), 169--198.

\bibitem{Kenyon.construction} R. Kenyon,
The construction of self-similar tilings,
{\em Geom.\ Funct.\ Anal.} {\bf  6} (1996), no.\ 3, 471--488.

\bibitem{Ken.thesis} R. Kenyon, Ph.D.\ Thesis, Princeton University,
1990.

\bibitem{KS} R. Kenyon and B. Solomyak, On the characterization of
expansion maps for self-affine tilings, {\em Discrete Comp. Geom.}
Online, 2009.


\bibitem{Lag} J. C. Lagarias,
Mathematical quasicrystals and the problem of diffraction, in {\em Directions in Mathematical Quasicrystals},
M.~Baake and R.~V.~ Moody, eds., CRM Monograph Series, Vol. 13, AMS, Providence, RI, 2000, pp. 61--93,

\bibitem{lawa} J. C. Lagarias and Y. Wang,
Substitution Delone sets, {\em Discrete Comput. Geom.} {\bf 29}
(2003), 175--209.
\bibitem{Lang} S. Lang, Algebra, Addison Wesley, second edition,
1984.

\bibitem{Lee} J.-Y. Lee, Substitution Delone Sets with Pure Point Spectrum are Inter Model Sets,
{\em  J. Geom. Phys.} {\bf 57} (2007) 2263-2285.

\bibitem{LMS} J.-Y. Lee, R. V. Moody, and B. Solomyak,
Pure Point Dynamical and Diffraction Spectra, {\em Ann. Henri Poincar{\'e}}
{\bf 3} (2002), 1003--1018.

\bibitem{LS} J.-Y Lee and B. Solomyak, Pure point diffractive substitution
Delone sets have the Meyer property, {\em Discrete Comp. Geom.} {\bf 39} (2008), 319--338.

\bibitem{LMS2} J.-Y. Lee, R. V. Moody, and B. Solomyak,
Consequences of Pure Point Diffraction Spectra for Multiset
Substitution Systems, {\em Discrete Comp. Geom.} {\bf 29} (2003),
525--560.

\bibitem{Lind}
D. Lind,
The entropies of topological Markov shifts and a
related class of algebraic integers, {\em
Ergodic Theory Dynam.\ Systems} {\bf 4} (1984), no.\ 2, 283--300.

\bibitem{Maud} C. Mauduit,
Caract\'{e}risation des ensembles normaux substitutifs, {\em Inventiones Math.}
{\bf  95}  (1989),  no.\ 1, 133--147.

\bibitem{Moody} R. V. Moody, Meyer sets and their duals, in {\em The Mathematics of Long-Range Aperiodic
Order (Waterloo, ON, 1995)}, R.~V.~Moody, ed., NATO Adv.\ Sci.\
Inst.\ Ser.\ C Math.\
 Phys.\ Sci., Vol. 489, Kluwer Acad.\ Publ., Dordrecht, 1997, 403--441.

\bibitem{mozes} S. Mozes, Tilings, substitution systems and dynamical systems generated by them, {\em J. d'Analyse Math.} {\bf 53} (1989),
139--186.

\bibitem{Petersen} K. Petersen, Factor maps between tiling dynamical systems,
{\em Forum Math.} {\bf 11} (1999), 503--512.

\bibitem{Prag} B. Praggastis, Numeration systems and Markov partitions from
self similar tilings, {\em Trans.\ Amer.\ Math.\ Soc.} {\bf 351}
(1999), no.\ 8, 3315--3349.

\bibitem{Robi} E. A. Robinson, Symbolic dynamics and tilings of
$\R^d$, Symbolic dynamics and its applications, 81--119, Proc.
Sympos. Appl. Math., 60, Amer. Math. Soc., Providence, RI, 2004.


\bibitem{soltil} B. Solomyak, Dynamics of self-similar tilings,
{\em Ergodic Theory Dynam. Systems} {\bf 17} (1997), 695--738.
Corrections to `Dynamics of self-similar tilings', {\em ibid.}
{\bf 19} (1999), 1685.

\bibitem{solucp} B. Solomyak,
Nonperiodicity implies
unique composition for self-similar translationally finite tilings,
{\em Discrete Comput.\ Geom.} {\bf 20} (1998), no.\ 2, 265--279.


\bibitem{sol-eigen} B. Solomyak, Eigenfunctions for substitution
tiling systems, {\em Advanced Studies in Pure Mathematics} {\bf
43} (2006), 1--22.

\bibitem{Solnotes} B. Solomyak,   Tilings and Dynamics, {\em Lecture Notes}, EMS Summer School on Combinatorics,
Automata and Number Theory, 8-19 May 2006, unpublished manuscript available at\\
\verb+http://www.math.washington.edu/~solomyak/PREPRINTS/notes6.pdf+

\bibitem{Stein} E. M. Stein, {\em Harmonic Analysis}, Princeton University
Press, 1993.

\bibitem{Thur} W. Thurston, AMS lecture notes, 1989.

\bibitem{Vij} T. Vijayaraghavan, On the fractional parts of the powers of a number (II), {\em Proc.\ Cambridge Philos.\ Soc.}
{\bf 37}  (1941), 349--357.

\end{thebibliography}
\end{document}